\def\RR{{\mathbb R}}
\def\EE{{\mathbb E}}
\def\LL{{\mathbb L}}
\def\NN{{\mathbb N}}
\def\PP{{\mathbb P}}
\def\t{\theta}
\def\s{\star}
\def\la{\lambda}
\def\l{\ell}
\def\tX{{\tilde X}}
\def\dt{\, dt}
\newcommand{\Ec}{{\cal E}}
\newcommand{\Fc}{{\cal F}}
\newcommand{\Gc}{{\cal G}}
\newcommand{\Hc}{{\cal H}}
\newcommand{\Mc}{{\cal M}}
\newcommand{\Nc}{{\cal N}}
\newcommand{\Vc}{{\cal V}}
\newcommand{\Var}{\mathrm{Var}}
\newcommand{\Cov}{\mathrm{Var}}
\newcommand{\Tr}{\mathrm{Tr}}
\newtheorem{thm}{Theorem}[section]
\newtheorem{theorem}[thm]{Theorem}
\newtheorem{proposition}[thm]{Proposition}
\newtheorem{lemma}[thm]{Lemma}
\newtheorem{corollary}[thm]{Corollary}
\newtheorem{remark}[thm]{Remark}
\def\expp#1{\mathrm{e}^{#1}}
\def\inv#1{\mathop{\frac{1}{ #1}}\nolimits}
\DeclareMathOperator*{\argmin}{\arg\!\min}
\def\ind#1{\mathrm{1}_{\left\{#1\right\}}}
\def\abs#1{\left|#1\right|}
\def\defeq{\stackrel{\Delta}{=}}
\numberwithin{equation}{section}
\definecolor{forest}{rgb}{0.13,0.55,0.13}
\newcounter{hypo}
\renewcommand{\thehypo}{(${\mathcal H}$-\arabic{hypo})}
\newcommand*{\dohypo}{\thehypo}
\newenvironment{hypo}[1][]{%
\refstepcounter{hypo}
\list{}{%
\settowidth{\labelwidth}{\dohypo}%
\setlength{\labelsep}{10pt}%
\setlength{\leftmargin}{\labelwidth}
\advance\leftmargin\labelsep%
}%
\item[\dohypo  #1]%
  }{%
\endlist
}
\begin{document}

\title{Coupling Importance Sampling and Multilevel Monte Carlo using Sample Average
  Approximation}
\author{Ahmed Kebaier\footnote{Universit\'e Paris 13, Sorbonne Paris Cit\'e, LAGA, CNRS (UMR
    7539), kebaier@math.univ-paris13.fr. This research benefited from the support of the
  chair Risques Financiers, Fondation du Risque and the Laboratory of Excellence MME-DII
(http://labex-mme-dii.u-cergy.fr/).}\; \& J\'er\^ome Lelong\footnote{Univ. Grenoble Alpes,
  Laboratoire Jean Kuntzmann, jerome.lelong@univ-grenoble-alpes.fr.  This project was supported by the Finance for Energy Market Research Centre, www.fime-lab.org.
}}

\maketitle

\begin{abstract}
  In this work, we propose a smart idea to couple importance sampling and Multilevel Monte
  Carlo (MLMC). We advocate a per level approach with as many importance sampling parameters as
  the number of levels, which enables us to handle the different levels independently.
  The search for parameters is carried out using sample average approximation, which
  basically consists in applying deterministic optimisation techniques to a Monte Carlo
  approximation rather than resorting to stochastic approximation.  Our innovative
  estimator leads to a robust and efficient procedure reducing both the discretization
  error (the bias) and the variance for a given computational effort.  In the setting of
  discretized diffusions, we prove that our  estimator satisfies a strong law of large
  numbers and a central limit theorem with optimal limiting variance, in the sense that
  this is the variance achieved by the best importance sampling measure (among the class
  of changes we consider), which is however non tractable. Finally, we illustrate the
  efficiency of our method on several numerical challenges coming from quantitative
  finance and show that it outperforms the standard MLMC estimator.

\vspace{0.2cm}
\noindent{\em\bf AMS 2000 Mathematics Subject Classification}.  60F05, 62F12, 65C05, 60H35.

\noindent{\em\bf Key Words and Phrases}. Sample average approximation; multilevel Monte
Carlo; variance reduction; uniform strong large law of numbers; central limit theorem;
importance Sampling.

\end{abstract}

\section{Introduction}

Expectations involving a stochastic process are often computed using a Monte Carlo method
combined with a discretization scheme. For instance, computing a hedging portfolio in
finance uses these tools.  Generally,  the asset price is modeled by a diffusion process
$(X_t)_{0\leq t\leq T}$, defined as the solution of a stochastic differential equation
(SDE)
\begin{equation}\label{eq:X}
  dX_{t} = b(X_{t})dt + \sigma(X_{t}) dW_{t},\quad X_0=x\in \mathbb R^d
\end{equation}
where $b : \RR^d \to \RR^d$, $\sigma : \RR^d \to \Mc_{d \times q}$ and $W$ is a Brownian
motion with values in $\RR^q$ defined on some given probability space $(\Omega,(\mathcal
F_t)_{0 \leq t \le T},\PP)$ with finite time horizon $T>0$. The process $X$ hardly ever
has an explicit solution, which implies that its simulation requires the use of a
discretization scheme. For $n \in \NN^*$, consider the continuous time Euler approximation
$X^n$ with time step $\delta=T/n$ given by
\begin{equation*}
  dX^n_t=b(X^n_{\eta_n(t)})dt+
  \sigma(X^n_{\eta_n(t)})dW_t, \quad \eta_n(t)=\lfloor t/\delta \rfloor \delta.
\end{equation*}
This work aims at combining importance sampling with different discretization methods:
first, we study the use of importance sampling for the standard case of Euler Monte Carlo and
then we apply it to MLMC. Many different changes of measure can be used
to implement importance sampling. When working with Lévy processes, it is common to use the
Esscher transform to introduce a new family of measures. For Brownian driven SDEs, the
Esscher transform actually corresponds to a Gaussian change of measure in the spirit of
the Girsanov theorem. Following the ideas of Arouna \cite{Aro}, we consider a parametric
family of stochastic processes $(X_t({\t}))_{0\leq t\leq T}$, with $\t \in \RR^q$, driven
by a Brownian motion with linear drift
\begin{equation*}
  dX_{t}(\t)=\left(b(X_{t}(\t)) + \sigma(X_{t}(\t)) \t \right) dt+\sigma(X_{t}(\t)) dW_{t}.
\end{equation*}
We also define the continuous time Euler approximation $X^n(\t)$ of the process $X(\t)$.
From Girsanov's Theorem, the process $ (B^{\t}_{t} \defeq W_{t} + \t t)_{t \le T}$ is a
Brownian motion under the new probability measure $\PP_\t$ equivalent to $\PP$ and such
that
\begin{equation*}
  \frac{d\PP_{\t}}{d\PP}_{|\Fc_{t}}=\exp\left(-\t\cdot W_{t} -
  \frac{1}{2}|\t|^{2}t\right) \defeq \Ec^-(W, \t).
\end{equation*}
Therefore,
\begin{align}
  \label{eq:is}
  \EE_{\PP}[\psi(X_T)] = \EE_{\PP_\t}[\psi(X_T(\t))] =
  \EE_{\PP}\left[\psi(X_T(\t)) \Ec^-(W,\t)\right].
\end{align}
This equality still holds when replacing $X$ (resp. $X(\t)$) by its Euler scheme $X^n$
(resp. $X^n(\t)$).  The l.h.s. and r.h.s. expectations are both computed under the
original probability measure. In the following, we will always use the measure $\PP$ and
therefore we will not write it anymore.  The idea of importance sampling Monte Carlo is to
use the r.h.s of~\eqref{eq:is} to build a Monte Carlo estimator of $\EE[\psi(X_T)]$ using
$X^n(\t)$ with $\t$ given by
$$ 
\t^\s = \argmin_{\t \in \RR^q} \Var\left(\psi(X_T(\t)) \Ec^-(W, \t)\right).
$$
Importance sampling for Euler Monte Carlo is studied in Section~\ref{sec:euler}: first, we
investigate how to approximate $\t^\s$ in practice and second we prove that the Monte
Carlo estimator combined with this approximation of $\t^\s$ satisfies a strong law of
large numbers and a central limit theorem when both $n$ and the number of samples go to
infinity. This result extends the limit theorems obtained in~\cite{JourLel}, in which the
authors investigated the case of a fixed number of discretization steps $n$.  The error
induced by using $\EE[\psi(X^n_T(\t))]$ instead of $\EE[\psi(X_T(\t))]$  is called the
discretization error and is responsible for the bias of the Euler Monte Carlo estimator,
while the Monte Carlo approximation only impacts the variance. The two errors are balanced
when the number of samples $N$ of the Monte Carlo method is proportional to $n^2$, which
leads to an overall complexity of order $n^3$.  In order to reduce the bias for a given
computational effort, Kebaier \cite{Keb} proposed to use the Statistical Romberg method,
which combines discretization schemes on two nested time grids. This method was
generalized by Giles \cite{Gil} who proposed to use a multilevel Monte Carlo algorithm
following the line of Heinrich's multilevel method for parametric integration~\cite{hei}. 

Let $m, L \in \NN$  with $m \ge 2$ and $L > 0$, the idea of the multilevel method is to
write the expectation on the finest time grid as a telescopic sum involving all the other
grids (referred to as levels)
\begin{align}
  \label{eq:ml-tele}
  \EE[\psi(X_T^{m^L})] = \EE[\psi(X_T^{m^0})] + \sum_{\l=1}^L \EE[\psi(X_T^{m^\l}) -
  \psi(X_T^{m^{\l - 1}})]
\end{align}
and then to approximate each expectation by a Monte Carlo method with a well chosen number
of samples to balance the errors between the different terms.  We refer the reader to the
extensive literature on MLMC for more details, see e.g.  \cite{BAK2014,tempone,
Cre,Der, GilMil, GilHig, GilSzp,Heibis, HeiSin, LemPag14}.  For a fixed computational
budget, the use of multilevel techniques clearly reduces the bias, but in many
situations the high variance also brings in a significant inaccuracy, which naturally
leads to trying to couple MLMC with variance reduction techniques.

In this work, we focus on coupling importance sampling with MLMC.  In \cite{BHK13-1} and
\cite{Haj}, the authors choose to apply MLMC to the right hand side of~\eqref{eq:is} coming
up with
\begin{equation}
  \label{eq:is-ml}
  \begin{split}
    \EE[\psi(X_T^{m^L})] & = \EE\left[\psi(X_T^{m^0}(\la)) \Ec^-(W, \la)\right] \\
    & \quad + \sum_{\l=1}^L \EE\left[(\psi(X_T^{m^\l}(\la)) -
    \psi(X_T^{m^{\l - 1}}(\la))) \Ec^-(W, \la)\right].
  \end{split}
\end{equation}
This approach mixes all the levels through the optimization of the parameter $\la$ and
breaks the independence between the levels of the multilevel approach, which nonetheless
made it so popular and easy to implement.

Instead of using~\eqref{eq:is-ml}, we would rather apply importance sampling to each
expectation in the telescopic sum of~\eqref{eq:ml-tele} to obtain for $\la_1, \dots, \la_L
\in \RR^q$
\begin{align*}
  \EE[\psi(X_T^{m^L})] & = \EE\left[\psi(X_T^{m^0}(\la_0)) \Ec^-(W, \la_0)\right] \\
  &\quad + \sum_{\l=1}^L \EE\left[(\psi(X_T^{m^\l}(\la_\l)) -
\psi(X_T^{m^{\l - 1}}(\la_\l))) \Ec^-(W, \la_\l)\right].
\end{align*}
Our importance sampling multilevel estimator is obtained by applying a Monte Carlo method to each
of the levels $\l$ with $N_\l$ samples
\begin{align}
  \label{eq:estimator-intro}
  Q_L(\la_0,\dots,\la_L) & =\frac{1}{N_0}\sum_{k=1}^{N_0}
  \psi( \tX^{m^0}_{T,0,k}(\la_0)) \Ec^-(\tilde W_{0,k}, \la_0) \nonumber \\
  & \qquad +\sum_{\ell=1}^{L}\frac{1}{N_\ell}
  \sum_{k=1}^{N_\ell}\left(\psi( \tX^{m^{\ell}}_{T,\ell,k}(\la_{\ell}))-\psi(
    \tX^{m^{\ell-1}}_{T,\ell,k}(\la_{\ell}))\right) \Ec^-(\Tilde W_{\ell,k}, \la_\ell)
\end{align}
The samples used in the different levels are independent and within each level they are
i.i.d. For any $\l \ge 0$, the variables $\tX^{m^{\ell}}_{T,\l,k}(\lambda_\l)$ (resp.
$\tX^{m^{\ell-1}}_{T,\l,k}(\lambda_\l)$ when $\l > 0$) are the terminal values of the
Euler schemes of $X(\lambda_\l)$ with $m^\l$ (resp. $m^{\l-1}$) time steps built using
the same Brownian path $\tilde W_{\l, k}$.  The variance of the importance sampling MLMC
estimator is given by 
$$
\Var[Q_L]= N_0^{-1} \sigma_0(\la_0)^2 +\sum_{\ell=1}^L N^{-1}_{\ell} \frac{(m-1)T}{m^\l}
\sigma_{\ell}^2(\lambda_{\ell})
$$
where 
\begin{align*}
  \sigma_0^2 (\la_0)  & \defeq \Var[\psi(X^{m^0}_{T}(\lambda_0)) \Ec^-(W,\lambda_0)] \\
  \sigma_{\ell}^2(\lambda_\l) & \defeq \frac{m^\l}{(m-1) T} \Var\left[\left(\psi(
      X^{m^{\ell}}_{T}(\lambda_\l)) - \psi(
  X^{m^{\ell-1}}_{T}(\lambda_\l))\right)\Ec^-(W, \la_\l) \right].
\end{align*}
By allowing for one importance sampling parameter $\la_\l$ per level, our approach has
many advantages over~\cite{BHK13-1, Haj}.  First, the computations within the different
levels remain independent. Second, the variance of each level $\l$ only depends on
$\lambda_\l$, which reduces the global minimization problem to several smaller
minimization problems.  Third, we actually minimize the real variance of the estimator and
not its asymptotic value and more importantly it can be implemented without knowing
$\nabla \psi$, which however appears in the central limit theorem for MLMC. The new idea
of using one importance sampling parameter per level was later taken up in~\cite{BHK16}
but coupled with stochastic approximation to build adaptive estimators.

Actually, minimizing $\la \longmapsto \sigma_{\ell}^2(\lambda)$ can be achieved by using
the randomly truncated Robbins Monro algorithm  proposed by Chen et al. \cite{CHZY,CGG}
and later investigated in the context of importance sampling by Lapeyre and Lelong
\cite{LapLel} and Lelong \cite{Lel2008}. The numerical stability of these stochastic
algorithms strongly depends on the choice of the descent step --- often referred to as the
gain sequence --- which proves to be highly sensitive in practice.  To overcome this
difficulty, Jourdain and Lelong \cite{JourLel} proposed to apply deterministic
optimization techniques to sample average estimators to search for the optimal parameter.
Following their methodology, we define $\sigma_{\l, N_\l'}^2$ as the sample average
approximation of $\sigma_\l^2$ with $N_\l'$ samples using the standard empirical Monte
Carlo estimator of the variance. We assume that the samples used in $\sigma_{\l, N_\l'}^2$
are independent of those used in $Q_L$. We refer to Section \ref{sec:mul} for more details
on how to choose the samples in the different approximations.  Now, we sketch the algorithm
corresponding to our method.
\begin{algorithm}[ht]
  \For{$\l=0:L$}{
    Sample the random function $\lambda \longmapsto \sigma_{\l, N_\l'}(\lambda)$.
    \textcolor{forest}{\tcp*[r]{$\sigma_{\l, N_\l'}^2$ is the sample average approximation of $\sigma_{\l}^2$, see
    Section~\ref{sec:ml-framework}}}
    Compute  $\hat \lambda_\l = \argmin \sigma_{\l, N_\l'}^2(\lambda)$ using
    Newton--Raphson's algorithm. \;
    Independently of $\sigma_{\l, N_\l'}^2$, sample the level $\l$
    of~\eqref{eq:estimator-intro} using $\hat \la_\l$. \;
  }
  Sum all the levels to obtain
  \begin{align*}
      Q_L(\hat \la_0,\dots,\hat \la_L) & =\frac{1}{N_0}\sum_{k=1}^{N_0}
      \psi( \tX^{m^0}_{T,0,k}(\hat \la_0)) \Ec^-(\tilde W_{0,k}, \hat \la_0) \\
      & \qquad +\sum_{\ell=1}^{L}\frac{1}{N_\ell}
      \sum_{k=1}^{N_\ell}\left(\psi( \tX^{m^{\ell}}_{T,\ell,k}(\hat \la_{\ell}))-\psi(
        \tX^{m^{\ell-1}}_{T,\ell,k}(\hat \la_{\ell}))\right) \Ec^-(\Tilde W_{\ell,k},
      \hat \la_\ell).
    \end{align*}
  \caption{Multilevel Importance Sampling (MLIS)}
  \label{algo:MLIS-sketch}
\end{algorithm}

First, we investigate in Section~\ref{sec:euler} the standard Euler Monte Carlo method
coupled with importance sampling. Then, in Section~\ref{sec:mul}, we study the importance
sampling framework with MLMC.  We prove that $Q_L(\hat \la_0,\dots,\hat \la_L)$ satisfies
a strong law of large numbers and a central limit theorem. Our MLIS estimator achieves the
smallest possible variance within the family of MLMC estimators approximating
$\EE[\psi(X_T)]$ using the class of processes $(X(\la))_{\la \in \RR^q}$.  Note that this
is also the limiting variance obtained in~\cite{BHK13-1} for the MLMC estimator built
on~\eqref{eq:is-ml} with the best possible parameter $\lambda \in \RR^q$.  The main
difficulty in proving these results is the uniform control of the triangular arrays
involved in the adaptive multilevel estimator. To overcome this issue, we prove in
Section~\ref{sec:slln} new limit theorems for doubly indexed sequences of random variables
in a general setting (see Propositions \ref{prop:slln2} and \ref{prop:slln2-u}). In
section \ref{sec:num}, we illustrate the efficiency of MLIS on challenging problems coming
from quantitative finance and show that it outperforms the standard MLMC estimator.

\section{Importance sampling with Euler Monte Carlo}
\label{sec:euler}
\subsection{Notation and general assumptions}

\begin{itemize}
  \item For a vector $x \in \RR^q$, $\abs{x}$ denotes the Euclidean norm of $x$.
  \item The superscript ${}^*$ denotes the transpose operator.
  \item For a matrix $A \in \Mc_{d \times q}$, $\abs{M}$ denotes the Frobenius norm of
    $A$ defined by $\sqrt{\Tr(A^* A)}$, which corresponds to the Euclidean norm on
    $\RR^{d \times q}$.
  \item For $q \in \NN^*$, $I_q$ denotes the identity matrix with size $q \times q$. 
  \item For $\alpha >0$, we define the set of functions
    \begin{align}
      \label{eq:Halpha}
      \Hc_\alpha = \Big\{ & \psi : \RR^d \rightarrow \RR \mbox{ s.t. }  \exists c
        >0, \beta \ge 1 , \;  \forall x \in \RR^d , \; |\psi(x)| \le c
        (1 + |x|^\beta) \nonumber\\
        & \quad \mbox{and } \forall x,y  \in \RR^d , \; |\psi(x) -
      \psi(y)| \le c (1 + (|x|^\beta \wedge |y|^\beta)) |x - y|^\alpha \Big\}
    \end{align}
  \item For a sequence of random variables $(X_n)_n$, ``$X_n \Longrightarrow X$'' means
    that $(X_n)_n$ converges in distribution to $X$. 
\end{itemize}

Here, we gather several standard assumptions required to ensure the convergence of the
Euler scheme.

\begin{hypo}
  \label{A:general}
  \begin{subhypo}
  \item The functions $b$ and $\sigma$ are Lipschitz
      \begin{equation*}
        \label{eq:Hbsigma}   \quad \forall x,y\in\mathbb R^d, \quad
        |b(x)-b(y)|+ \abs{\sigma(x)-\sigma(y)}\leq
        C_{b,\sigma}|x-y|, \tag{$\mathcal H_{b,\sigma}$}
      \end{equation*}
      for some real number $C_{b,\sigma}>0$.
    \item \label{eq:P} $\forall p\geq 1,\;X,X^n\in L^p$ and there exists $K_p(T) >0$ s.t.
      \begin{equation*}
       \EE\left[{\sup_{0 \leq t \leq T}}\abs{X_t -  {X}^n_t}^{p} \right]
      \leq\frac{K_p(T)}{n^{p/2}}. 
    \end{equation*}
  \item There exist $\gamma \in [1/2, 1]$ and $C_{\psi}(T,\gamma) >0$ s.t.
    \begin{equation*}
      \label{eq:Hen}
       n^{\gamma}(\EE\psi(X^n_T)-\EE\psi(X_T))\rightarrow 
       C_\psi(T,\gamma). \tag{$\Hc_\gamma$}
    \end{equation*}
  \end{subhypo}
\end{hypo}

\begin{hypo}
  \label{A:nonul}
  The function $\psi$ satisfies
  \begin{equation}
    \label{eq:nonzero-int}
    \PP(\psi(X_T) \ne 0) > 0 \quad \text{and} \quad \forall \, \t \in \RR^q, \; \EE\left[
    \psi(X_T)^2 \expp{-\t \cdot W_T} \right] < \infty.
  \end{equation}
\end{hypo}

\subsection{General framework}

In this section, we investigate the case of an Euler Monte Carlo approach. We consider the importance
sampling representation of $\EE[\psi(X_T)]$ given by
\begin{align*}
  \EE[\psi(X_T(\t)) \Ec^-(W, \t)].
\end{align*}
The optimal value for $\t$ is given by
\begin{align*}
  \t^\s = \argmin_{\t \in \RR^q} \quad \mbox{ with } \quad v(\t) \defeq \EE[(\psi(X_T(\t))
  \Ec^-(W, \t))^2].
\end{align*}
By using~\eqref{eq:is}, we can rewrite $v$ as
\begin{align*}
  v(\t) = \EE[\psi(X_T)^2 \Ec^+(W, \t)]  \quad \mbox{ with } \quad \Ec^+(W, \t) \defeq
  \expp{-W_T \cdot \t + \frac{\abs{\t}}{2}}.
\end{align*}
From a practical point of view, the quantity $v(\t)$ is not explicit so we use the Euler
scheme to discretize  $X(\t)$ and approximate $\t^\s$ by
\begin{equation}
\label{eq_variance}
\t_n\defeq \argmin_{\t \in \RR^q} v_n(\t)\quad \mbox{ with }\quad   
v_n(\t)\defeq \EE\left[\psi(X_{T}^{n})^2 \Ec(W, \t)\right].
\end{equation}
Since the expectation is usually not tractable,
we replace it by its sample average approximation and define
\begin{equation}
  \label{eq_variance_saa}
  \t_{n,N}\defeq \argmin_{\t \in \RR^q} v_{n,N}(\t)\quad \mbox{ with }\quad  
  v_{n,N}(\t)\defeq \frac{1}{N}\sum_{i=1}^N\left(\psi(X^n_{T,i})^2 
  \Ec(W_{i}, \t)\right),
\end{equation}
where $(X^n_{T,i},W_{T,i})_{1\leq i\leq N}$ are i.i.d. samples with the law of
$(X^n_{T},W_{T})$. The existence and uniqueness of $\t^\s$, $\t_{n}$ and $\t_{n,N}$ are
ensured by the following lemma whose proof can easily be adapted from \cite[Lemma
1.1]{JourLel}.
\begin{lemma}
  \label{lem:convex} Under Condition~\ref{A:nonul}, the functions  $v$, $v_n$ and
  $v_{n, N}$ are infinitely continuously differentiable for all $n, N$ and the
  derivatives are obtained by exchanging expectation and differentiation.  Moreover, the
  functions $v$ and $v_n$ are strongly convex and so is $v_{n, N}$ for any $N$ such that
  $v_{n, N}$ is not identically zero.
\end{lemma}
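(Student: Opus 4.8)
The plan is to establish three properties in sequence: smoothness with differentiation under the expectation, strong convexity of $v$ and $v_n$, and then strong convexity of $v_{n,N}$ on the event where it is not identically zero. I would work primarily with the representation $v(\t) = \EE[\psi(X_T)^2 \Ec^+(W,\t)]$ where $\Ec^+(W,\t) = \expp{-W_T\cdot\t + \frac{\abs{\t}^2}{2}}$, and its analogues
\begin{align*}
  v_n(\t) = \EE[\psi(X^n_T)^2 \expp{-W_T\cdot\t + \frac{\abs{\t}^2}{2}}], \qquad
  v_{n,N}(\t) = \frac{1}{N}\sum_{i=1}^N \psi(X^n_{T,i})^2 \expp{-W_{T,i}\cdot\t +
  \frac{\abs{\t}^2}{2}},
\end{align*}
since in this Gaussian form the $\t$-dependence is an explicit smooth function of $\t$, and only the payoff-squared weight remains random. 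This reformulation is precisely what makes the differentiation tractable and mirrors the structure exploited in \cite[Lemma 1.1]{JourLel}.

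For the smoothness and differentiation-under-the-integral claim, the key step is a domination argument. The map $\t \mapsto \expp{-W_T\cdot\t + \frac{\abs{\t}^2}{2}}$ is $C^\infty$ in $\t$, and each of its partial derivatives is a polynomial in the components of $W_T$ times $\Ec^+(W,\t)$. On any compact neighbourhood of a fixed $\t_0$, one bounds $\abs{\t}^2$ and controls $\expp{-W_T\cdot\t}$ uniformly by $\expp{c\abs{W_T}}$ for a suitable constant; combined with the growth condition $\psi \in \Hc_\alpha$ (so $\psi(X^n_T)^2 \le c(1+\abs{X^n_T}^\beta)^2$) and the $L^p$ integrability of $X^n_T$ and $X_T$ from~\ref{eq:P}, together with the finite exponential moments of the Gaussian $W_T$, this yields a local $L^1$ dominating function. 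The second integrability assumption in~\eqref{eq:nonzero-int}, namely $\EE[\psi(X_T)^2 \expp{-\t\cdot W_T}] < \infty$ for all $\t$, guarantees $v$ itself is finite everywhere and secures the dominated-convergence hypotheses for the continuous-$X_T$ case; for $v_{n,N}$ the sum is finite so differentiation is immediate.

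For strong convexity, I would compute the Hessian explicitly. Differentiating twice under the expectation gives, writing $g(\t) = \expp{-W_T\cdot\t + \frac{\abs{\t}^2}{2}}$, the gradient $\nabla g(\t) = (\t - W_T) g(\t)$ and the Hessian $\Hess g(\t) = \left[ (\t - W_T)(\t - W_T)^* + I_q \right] g(\t)$. Hence
\begin{align*}
  \Hess v(\t) = \EE\left[ \psi(X_T)^2 \left( (\t - W_T)(\t - W_T)^* + I_q \right) \Ec^+(W,\t) \right].
\end{align*}
Since the bracketed matrix dominates $I_q$ in the positive semidefinite order and $\psi(X_T)^2 \Ec^+(W,\t) \ge 0$, we get $\Hess v(\t) \succeq \EE[\psi(X_T)^2 \Ec^+(W,\t)]\, I_q = v(\t) I_q$. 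The first part of~\eqref{eq:nonzero-int}, $\PP(\psi(X_T)\neq 0)>0$, forces $v(\t) > 0$ for every $\t$, so $\Hess v(\t) \succeq \alpha(\t) I_q$ with $\alpha(\t)>0$; uniform strong convexity on all of $\RR^q$ then follows from a lower bound on $v$ that is uniform over compacts, or one argues strong convexity on each compact which suffices for existence and uniqueness of the minimizer. The identical computation applies verbatim to $v_n$ (using $\PP(\psi(X^n_T)\neq 0)>0$, which holds for $n$ large since $X^n_T \to X_T$) and to $v_{n,N}$, where $\Hess v_{n,N}(\t) \succeq v_{n,N}(\t) I_q$ and the hypothesis that $v_{n,N}$ is not identically zero means at least one sample has $\psi(X^n_{T,i})^2 > 0$, making $v_{n,N}(\t)>0$ everywhere and hence the empirical Hessian strictly positive definite.

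The main obstacle I anticipate is the domination/integrability bookkeeping needed to justify differentiating under the expectation twice while keeping uniform control over a neighbourhood of $\t$: one must simultaneously absorb the polynomial growth of $\psi^2$ and of the Hessian prefactor $(\t - W_T)(\t-W_T)^*$ against the competing exponential $\expp{-W_T\cdot\t}$ and the Gaussian tails of $W_T$, all while invoking the $L^p$ bounds on the Euler scheme. Because the paper states this proof "can easily be adapted from \cite[Lemma 1.1]{JourLel}," I would present the convexity computation in full and cite that reference for the routine domination estimates, noting only that the key structural input is the Gaussian reformulation that isolates all $\t$-dependence into the explicit factor $\Ec^+(W,\t)$.
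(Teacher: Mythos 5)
Your overall route --- isolating all the $\t$-dependence in the explicit Gaussian factor, differentiating under the expectation via a local domination argument, and reading convexity off the Hessian identity --- is precisely the adaptation of \cite[Lemma 1.1]{JourLel} that the paper points to (the paper gives no proof of its own), and your Hessian computation is structurally correct, up to missing factors of $T$: with the correct kernel $\Ec^+(W,\t) = \expp{-\t \cdot W_T + \frac{1}{2}\abs{\t}^2 T}$ the gradient prefactor is $(T\t - W_T)$ and the bound reads $\Hess v(\t) \succeq T\, v(\t)\, I_q$.

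There is, however, a genuine gap in how you conclude strong convexity. From $\Hess v(\t) \succeq T\, v(\t)\, I_q$ and $v(\t) > 0$ pointwise you only obtain that the Hessian is positive definite at every point, i.e.\ strict (locally strong) convexity; strong convexity on $\RR^q$ requires $\inf_{\t \in \RR^q} v(\t) > 0$, and neither of your two proposed completions delivers it. A lower bound on $v$ ``uniform over compacts'' holds for any positive continuous function and gives nothing global; and ``strong convexity on each compact suffices for existence and uniqueness of the minimizer'' is false in general --- the function $\t \mapsto \expp{\t}$ on $\RR$ has strictly positive second derivative, hence is strongly convex on every compact, yet has no minimizer. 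What closes the gap (and is the real content of the cited lemma) is a \emph{global} lower bound on $v$ extracted from Condition~\ref{A:nonul} by completing the square: since $\PP(\psi(X_T) \ne 0) > 0$, choose $\epsilon, M > 0$ such that $p \defeq \PP\left( \abs{\psi(X_T)} \ge \epsilon, \; \abs{W_T} \le M \right) > 0$; then for every $\t$,
\begin{align*}
  v(\t) \ge \epsilon^2\, p\, \expp{-\abs{\t} M + \frac{1}{2}\abs{\t}^2 T}
  \ge \epsilon^2\, p\, \expp{-M^2/(2T)},
\end{align*}
because $-\abs{\t} M + \frac{T}{2}\abs{\t}^2 \ge -M^2/(2T)$. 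Hence $\Hess v(\t) \succeq T \epsilon^2 p\, \expp{-M^2/(2T)}\, I_q$ uniformly in $\t$, which is global strong convexity; the same argument run with the realized samples handles $v_{n,N}$ on the event where it is not identically zero, and as a by-product yields the coercivity needed for existence of the minimizers. A smaller point: the lemma asserts strong convexity of $v_n$ for \emph{all} $n$, whereas your argument for $\PP(\psi(X^n_T) \ne 0) > 0$ only works for $n$ large and uses the continuity of $\psi$ (i.e.\ $\psi \in \Hc_\alpha$), which is not part of Condition~\ref{A:nonul}; this imprecision is arguably inherited from the statement itself, but it should be flagged rather than asserted.
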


\subsection{Convergence of the optimal importance sampling parameter}\label{sec:param}

\begin{theorem}
  \label{th:convergence} Suppose $\sigma$ and $b$ satisfy~\eqref{eq:Hbsigma}.  Let
  $\psi$ satisfy Condition~\ref{A:nonul} and belong to $\Hc_\alpha$ for
  some $\alpha>0$. Then, $\t_{n} \to \t^\s$ a.s. when $n \to +\infty$.
\end{theorem}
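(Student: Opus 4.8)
The plan is to prove the almost-sure convergence $\t_n \to \t^\s$ by establishing that the random criteria $v_n$ converge to the limiting criterion $v$ in a sufficiently strong (locally uniform) sense and then invoking the strong convexity from Lemma~\ref{lem:convex} to transfer this convergence to the minimizers. The key structural fact is the rewriting $v(\t) = \EE[\psi(X_T)^2 \Ec^+(W,\t)]$ and the analogous $v_n(\t) = \EE[\psi(X_T^n)^2 \Ec^+(W,\t)]$, in which the change of measure has pushed all the $\t$-dependence into the deterministic weight $\Ec^+(W,\t) = \expp{-W_T \cdot \t + \abs{\t}^2/2}$ while the (now $\t$-free) integrand $\psi(X_T^n)^2$ is what actually carries the dependence on $n$.

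First I would show that $v_n \to v$ locally uniformly on $\RR^q$. For each fixed $\t$, the difference is
\begin{align*}
  v_n(\t) - v(\t) = \EE\left[ \left(\psi(X_T^n)^2 - \psi(X_T)^2\right) \Ec^+(W,\t) \right],
\end{align*}
and the strategy is to bound this using the structural properties of $\psi \in \Hc_\alpha$ together with the $L^p$ control in~\ref{eq:P}. The factorization $\psi(X_T^n)^2 - \psi(X_T)^2 = (\psi(X_T^n) - \psi(X_T))(\psi(X_T^n) + \psi(X_T))$, combined with the Hölder-type Lipschitz estimate $\abs{\psi(x) - \psi(y)} \le c(1 + (\abs{x}^\beta \wedge \abs{y}^\beta))\abs{x-y}^\alpha$ and the polynomial growth bound, lets me dominate the integrand by a product of powers of $\abs{X_T^n}$, $\abs{X_T}$, and $\abs{X_T^n - X_T}^\alpha$. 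A Cauchy--Schwarz (or Hölder) splitting then separates the small factor $\EE[\abs{X_T^n - X_T}^{p\alpha}]^{1/p}$, which tends to zero at rate $n^{-\alpha/2}$ by~\ref{eq:P}, from the $\t$-dependent weight $\Ec^+(W,\t)$ and the polynomially growing terms, both of which are controlled by Condition~\ref{A:nonul} (the finiteness of $\EE[\psi(X_T)^2 \expp{-\t \cdot W_T}]$, extended to the needed moments). To upgrade pointwise convergence to local uniformity, I would note that on any compact set $\t \in K$ the weight $\Ec^+(W,\t)$ and its bounding moments are uniformly integrable and that $v_n$ and $v$ are convex; for convex functions, pointwise convergence on a dense set already forces locally uniform convergence, so uniformity on compacts comes essentially for free once the pointwise statement is in hand.

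Next I would combine locally uniform convergence with the strong convexity of $v$ (Lemma~\ref{lem:convex}) to conclude $\t_n \to \t^\s$. Strong convexity of $v$ gives a unique minimizer $\t^\s$ and a quadratic lower bound $v(\t) - v(\t^\s) \ge \tfrac{\mu}{2}\abs{\t - \t^\s}^2$ for some $\mu > 0$; first this yields that the minimizers $\t_n$ remain in a fixed compact neighborhood of $\t^\s$ for $n$ large (otherwise $v(\t_n)$ would be bounded away from $v(\t^\s)$, contradicting $v_n(\t_n) \le v_n(\t^\s) \to v(\t^\s)$), and then on that compact set the uniform closeness $\sup_K \abs{v_n - v} \to 0$ together with the quadratic growth pins $\t_n$ to $\t^\s$. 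Concretely, $\tfrac{\mu}{2}\abs{\t_n - \t^\s}^2 \le v(\t_n) - v(\t^\s) \le (v(\t_n) - v_n(\t_n)) + (v_n(\t^\s) - v(\t^\s)) \le 2\sup_K\abs{v_n - v} \to 0$.

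\textbf{Main obstacle.}
The hard part will be the moment control in the uniform-convergence step: I must verify that the dominating random variables obtained after applying the $\Hc_\alpha$ estimates — products of the form $\psi(X_T^n)$, $\psi(X_T)$, powers of $\abs{X_T^n - X_T}^\alpha$, and the exponential weight $\Ec^+(W,\t)$ — are integrable with bounds uniform in $n$ and locally uniform in $\t$. This requires carefully chosen Hölder exponents so that the moments of $X^n$ (uniformly bounded in $n$ via~\ref{eq:P}), the rate $n^{-\alpha/2}$, and the exponential moment $\EE[\psi(X_T)^2\expp{-\t\cdot W_T}]$ from Condition~\ref{A:nonul} all fit together; in particular one needs a version of the exponential-moment finiteness for $\psi(X_T^n)$ and for the extra polynomial factors, which should follow from the Gaussianity of $W_T$ and the polynomial growth of $\psi$, but the bookkeeping of exponents is the delicate point.
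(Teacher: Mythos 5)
Your proposal is correct in substance but follows a genuinely different route from the paper. The paper's own proof of this theorem is essentially a one-line reduction: it checks via H\"older's inequality that $\sup_n \EE[\psi(X^n_T)^2 \expp{-\t\cdot W_T}] < +\infty$ and then invokes Theorem~2.2 of \cite{BHK13-1}, which already contains the argmin-convergence argument for the deterministic functions $v_n$. What you propose instead is a self-contained proof: pointwise convergence $v_n \to v$ by the factorization $\psi(X^n_T)^2 - \psi(X_T)^2 = (\psi(X^n_T)-\psi(X_T))(\psi(X^n_T)+\psi(X_T))$, H\"older splitting against the rate $n^{-\alpha/2}$ from \ref{eq:P}, upgrade to locally uniform convergence by convexity (pointwise convergence of finite convex functions is automatically locally uniform), then transfer to the minimizers via strong convexity from Lemma~\ref{lem:convex}. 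This is exactly the machinery the paper itself deploys for the harder sample-average statement (Proposition~\ref{prop:cvu} and Theorem~\ref{thm:theta_cv}), so your route has the merit of making the deterministic case self-contained and of exhibiting the $n^{-\alpha/2}$ rate, at the cost of redoing work the paper outsources to \cite{BHK13-1}. Your side remarks are also accurate: under $\psi\in\Hc_\alpha$ the exponential-moment bookkeeping is unproblematic because $X, X^n$ have all polynomial moments uniformly in $n$ and $\Ec^+(W,\t)$ has all moments locally uniformly in $\t$; the real role of Condition~\ref{A:nonul} is the nondegeneracy $\PP(\psi(X_T)\ne 0)>0$ underpinning strong convexity and uniqueness of $\t^\s$.

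One step in your plan is misstated and would fail as written: the localization of $\t_n$ to a compact set. You argue that if $\t_n$ escaped to infinity then ``$v(\t_n)$ would be bounded away from $v(\t^\s)$, contradicting $v_n(\t_n) \le v_n(\t^\s) \to v(\t^\s)$,'' but this conflates $v(\t_n)$ with $v_n(\t_n)$: your uniform control $\sup_K\abs{v_n - v}\to 0$ holds only on compacts, so off a compact set there is a priori no contradiction between $v(\t_n)$ being large and $v_n(\t_n)$ being small. The gap is repairable with tools you already have. Either use the convexity ray argument of the paper's proof of Theorem~\ref{thm:theta_cv}: for $\abs{\t - \t^\s} \ge \varepsilon$, convexity of $v_n$ gives
\begin{equation*}
  v_n(\t) - v_n(\t^\s) \ge \frac{\abs{\t - \t^\s}}{\varepsilon}\left[ v_n\left(\t^\s + \varepsilon \frac{\t - \t^\s}{\abs{\t - \t^\s}}\right) - v_n(\t^\s) \right],
\end{equation*}
and the bracket is controlled by uniform convergence on the compact sphere $\abs{\t-\t^\s}=\varepsilon$, which excludes minimizers outside the ball. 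Alternatively, establish a strong-convexity modulus for $v_n$ itself that is uniform in $n$: by Cauchy--Schwarz, $v_n(\t) \ge \EE[\abs{\psi(X^n_T)}]^2$, so $\nabla^2 v_n(\t) \succeq T\,\EE[\abs{\psi(X^n_T)}]^2 I_q$, and since $\EE[\abs{\psi(X^n_T)}] \to \EE[\abs{\psi(X_T)}] > 0$, the bound $\frac{\mu}{2}\abs{\t_n - \t^\s}^2 \le v_n(\t^\s) - v_n(\t_n) \le v_n(\t^\s)$ keeps $\t_n$ bounded. With either repair, your concluding inequality $\frac{\mu}{2}\abs{\t_n - \t^\s}^2 \le 2\sup_K\abs{v_n - v}$ goes through and the proof is complete.
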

By Hölder's inequality, for any function $\psi \in \Hc_\alpha$, 
\ref{A:nonul} implies that $\sup_n\EE[\psi(X^n_T)^2 \expp{-\t \cdot
W_T}] < +\infty$.  Hence,  the proof of the theorem ensues from \cite[Theorem
2.2]{BHK13-1}. \\

In the following, we let $N$ depend on $n$ so that $N \defeq N_n$ tends to infinity
with $n$. \\

\begin{proposition}\label{prop:cvu}
  Assume that Assumption~\eqref{eq:Hbsigma} holds and that $\psi \in \Hc_\alpha$ for
  some $\alpha>0$. Then, for all $K >0$, a.s. when $n \to \infty$
  \begin{align*}
    \sup_{|\t| \le K}|v_{n,N_n}(\t)- v(\t)|\to 0; \;
    \sup_{|\t| \le K}|\nabla v_{n,N_n}(\t)- \nabla v
    (\t)|\to 0.
  \end{align*}
\end{proposition}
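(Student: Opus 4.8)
The plan is to treat both limits through a single decomposition that separates the Monte Carlo fluctuation from the deterministic discretization error. For every $\t$ I would write
\begin{align*}
  v_{n,N_n}(\t) - v(\t) = \bigl(v_{n,N_n}(\t) - v_n(\t)\bigr) + \bigl(v_n(\t) - v(\t)\bigr),
\end{align*}
and likewise for the gradients. Since $\psi\in\Hc_\alpha$ has polynomial growth and $W_T$ has Gaussian tails, dominated convergence licenses differentiation under the expectation, so that $\nabla v(\t)=\EE[\psi(X_T)^2(\t-W_T)\Ec^+(W,\t)]$ while $\nabla v_{n,N_n}$ is the matching finite sum. The second bracket above is deterministic; the first is the error of a sample average over the triangular array $(X^n_{T,i},W_i)_i$ whose row law---that of $(X^n_T,W_T)$---drifts with $n$, and is therefore the delicate term.

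For the deterministic bracket I would establish $\sup_{|\t|\le K}\abs{v_n(\t)-v(\t)}\to 0$ and its gradient analogue (both weights $\Ec$ and $\Ec^+$ being the common kernel $\expp{-W_T\cdot\t+\abs{\t}^2/2}$). On $\{|\t|\le K\}$ one has $\Ec^+(W,\t)\le\expp{K\abs{W_T}+K^2/2}$, and this bound together with that for the gradient $(\t-W_T)\Ec^+(W,\t)$ places the weight and its derivative in every $L^p$ with a norm bounded uniformly over the compact. Writing $\abs{\psi(x)^2-\psi(y)^2}\le\abs{\psi(x)-\psi(y)}\,\abs{\psi(x)+\psi(y)}$ and combining the H\"older property and the growth bound defining $\Hc_\alpha$, the integrand is dominated by a polynomial in $(\abs{X^n_T},\abs{X_T})$ times $\abs{X^n_T-X_T}^\alpha$ times the weight. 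A single application of H\"older's inequality then bounds $\abs{v_n(\t)-v(\t)}$, uniformly in $|\t|\le K$, by a positive power of $\EE[\sup_{0\le t\le T}\abs{X^n_t-X_t}^r]$ for suitable $r\ge1$, which vanishes by~\ref{eq:P} (itself a consequence of~\eqref{eq:Hbsigma}); the uniform moment bounds $\sup_n\EE[\abs{X^n_T}^p]<\infty$ keep the polynomial factor under control. The gradient statement is identical once $(\t-W_T)$ is absorbed into the $L^p$ norm of the weight.

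The main obstacle is the stochastic bracket $\sup_{|\t|\le K}\abs{v_{n,N_n}(\t)-v_n(\t)}$, a uniform strong law of large numbers for a triangular array, for which I would appeal to Proposition~\ref{prop:slln2-u}. Its envelope hypothesis holds with
\begin{align*}
  G_K(x,w)\defeq c^2(1+\abs{x}^\beta)^2\,\expp{K\abs{w}+K^2/2}\ge \sup_{|\t|\le K}\psi(x)^2\,\Ec(w,\t),
\end{align*}
for which Cauchy--Schwarz, the uniform moment bounds on $X^n_T$ and the exponential moments of $\abs{W_T}$ yield $\sup_n\EE[G_K(X^n_T,W_T)^{1+\delta}]<\infty$ for some $\delta>0$; this is exactly the uniform integrability that tames the drifting row law. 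The equicontinuity in $\t$ needed to pass from a pointwise to a uniform law comes from the smoothness of $\t\mapsto\Ec(W,\t)$, whose gradient is bounded on $\{|\t|\le K\}$ by $(\abs{W_T}+K)\expp{K\abs{W_T}+K^2/2}$ and hence, after multiplication by $\psi(X^n_T)^2$, admits a uniformly integrable envelope of the same kind, supplying a Lipschitz-in-$\t$ modulus for the summands. Proposition~\ref{prop:slln2-u} then gives $\sup_{|\t|\le K}\abs{v_{n,N_n}(\t)-v_n(\t)}\to 0$ a.s., and the very same verification applied to the summands $\psi(X^n_{T,i})^2(\t-W_i)\Ec(W_i,\t)$ yields the uniform convergence of the gradients.

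Combining the two brackets by the triangle inequality delivers both uniform limits. I would add that, should Condition~\ref{A:nonul} also hold, there is a shorter route to the first assertion: by Lemma~\ref{lem:convex} the maps $v_{n,N_n}$ and $v$ are convex, so the pointwise a.s.\ convergence furnished by the non-uniform triangular law of Proposition~\ref{prop:slln2} already forces local uniform convergence through the classical theorem on convergence of convex functions, while differentiability of the strongly convex limit $v$ promotes this to uniform convergence of the gradients on compacts, bypassing the equicontinuity check. I nevertheless expect the direct argument via Proposition~\ref{prop:slln2-u} to be the one to carry out, as it uses only the hypotheses assumed here and transfers verbatim to the multilevel setting of Section~\ref{sec:mul}.
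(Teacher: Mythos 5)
Your proposal is correct, and its engine is the same as the paper's: Proposition~\ref{prop:slln2-u}, verified through Cauchy--Schwarz, the uniform $L^p$ bounds on $X^n_T$ coming from Property~\ref{eq:P}, the exponential moments of the Girsanov weights on $\{|\t| \le K\}$, and the continuity in $\t$ of the exponential kernel to get Assumption~\ref{slln2-sup-u} (using Remark~\ref{rem:iid}, since each row is i.i.d.). The one real difference is organizational: your decomposition $v_{n,N_n}-v=(v_{n,N_n}-v_n)+(v_n-v)$ is not in the paper, because Proposition~\ref{prop:slln2-u} was built precisely so that its hypothesis \ref{slln2-fm} only requires \emph{pointwise} convergence of the row means $v_n(\t)\to v(\t)$ (criterion \ref{f_m-pointwise}); the uniformity over compacts of the deterministic part then comes out of the proposition itself. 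Your hand-made proof that $\sup_{|\t|\le K}|v_n(\t)-v(\t)|\to 0$ via H\"older and the strong Euler rate is correct but duplicates work the proposition already does, and applying the proposition to the centered array obliges you to re-verify \ref{slln2-u} and \ref{slln2-sup-u} after subtracting $v_n$ --- harmless, but extra bookkeeping. Two points of precision: first, \ref{slln2-u} is a \emph{variance} condition, so your envelope $G_K$ needs a uniformly bounded second moment, not just the $(1+\delta)$-moment you state; your tools give this immediately (the paper uses $\EE[\psi(X^n_T)^8]^{1/2}\,\EE[\sup_{|\t|\le K}\Ec^+(W,\t)^4]^{1/2}$), so this is an imprecision rather than a gap. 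Second, the gradient of the kernel is $(\t T - W_T)\,\Ec^+(W,\t)$, not $(\t - W_T)\,\Ec^+(W,\t)$ (compare the covariance appearing in Theorem~\ref{thm:theta_cv}). Finally, your convexity shortcut is exactly the device the paper itself uses later, in the proof of Theorem~\ref{thm:slln}, to upgrade pointwise to locally uniform convergence of $v_n$; your decision not to rely on it here, where Condition~\ref{A:nonul} is not assumed, matches the paper's choice.
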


\begin{proof}
 The proof of the two results are very similar, we omit the second one and
 concentrate on the uniform convergence for $v_{n, N_n}$. To do so, we will
 apply Proposition~\ref{prop:slln2-u}. Now, we check Assumptions
 \ref{slln2-fm}, \ref{slln2-u}, \ref{slln2-sup-u}. At first, note that
 under Assumption \eqref{eq:Hbsigma}, we have the almost sure convergence of
 $X^n_{T}$ towards $X_{T}$. As $\psi\in \mathcal H_{\alpha}$, it follows from
 Property \ref{eq:P} that for all $a>1$,
 $\sup_{n\in\NN}\EE\left[\abs{\psi(X^n_{T})^2 e^{-\t \cdot
       W_{T}+\frac{1}{2}|\t|^{2}T}}^a\right]<\infty$. Note that for every fixed $n$,
 the sequence $\left(  \psi(X^n_{T,i})^2 e^{-\t \cdot
     W_{T,i}+\frac{1}{2}|\t|^{2}T}\right)_i$ is i.i.d.  Then, we deduce that
 for all $m\in\NN^*$
$$
\lim_{n\rightarrow\infty}\EE\left[ v_{n, m}(\t) \right]=\EE\left[\psi(X_{T})^2 
    e^{-\t \cdot W_{T}+\frac{1}{2}|\t|^{2}T}\right].
$$
This yields  \ref{slln2-fm}.  Let $K>0$.  As  $\psi\in \mathcal
H_{\alpha}$ we obtain using the Cauchy Schwarz inequality and Property  \ref{eq:P}  that
$$ 
\sup_n \sup_m m \Var\left(\sup_{|\t|\leq K} v_{n, m}(\t)\right)
\leq \sup_n\EE^{1/2}\left[\psi(X^n_{T})^8\right]
    \EE^{1/2}\left[ \sup_{|\t|\leq K}e^{-4\t \cdot W_{T}+2|\t|^{2}T}\right]<\infty. 
$$
Using the same arguments, we also get 
$$
\sup_n \sup_m \Var\left(\psi(X^n_{T,m})^2  \sup_{|\t|\leq K}  
    e^{-\t \cdot W_{T,m}+\frac{1}{2}|\t|^{2}T}   \right)<\infty.
$$
This yields \ref{slln2-u}.
Concerning the last assumption, if we fix $\delta>0$, $\t\in \RR^d$  and set
$B(\t, \delta)$ --- the open ball  with center $\t$ and radius $\delta$ --- then the
Cauchy Schwarz inequality gives
\begin{multline*}
  \sup_n\EE\left[ \psi(X^n_{T})^2\sup_{\t'\in B(\t, \delta)}\abs{e^{-\t' \cdot W_{T}+\frac{1}{2}|\t'|^{2}T}
- e^{-\t \cdot W_{T}+\frac{1}{2}|\t|^{2}T}} \right]^2\leq \\
\sup_n \EE\left[\psi(X^n_{T})^4\right] \EE\left[\sup_{\t'\in B(\t, \delta)}\abs{e^{-\t' \cdot W_{T}+\frac{1}{2}|\t'|^{2}T}
- e^{-\t \cdot W_{T}+\frac{1}{2}|\t|^{2}T}}^2 \right].
\end{multline*}
Using the elementary inequality $|e^x-e^y| \le \abs{x-y}
(\expp{{x}} + \expp{{y}})$, we easily deduce that the quantity $ \EE\left[\sup_{\t'\in
    B(\t, \delta)}\abs{e^{-\t' \cdot W_{T}+\frac{1}{2}|\t'|^{2}T} -
    e^{-\t \cdot W_{T}+\frac{1}{2}|\t|^{2}T}}^2 \right]$ can be made
arbitrarily small.  Finally, Assumption~\ref{slln2-sup-u} is satisfied using
Remark~\ref{rem:iid}.
\end{proof}

\begin{theorem}
  \label{thm:theta_cv} Assume that Assumption~\eqref{eq:Hbsigma} holds and that
  $\psi \in \Hc_\alpha$ for some $\alpha>0$. Then, $\t_{n,N_n}
  \longrightarrow \t^\s$ a.s. and 
  $\sqrt{N_n} (\t_{n,N_n} - \t^\s) \Longrightarrow N(0, \Gamma)$ when $n \to \infty$ with
  \begin{align*}
    \Gamma = [\nabla^2 v(\t^\s)]^{-1} \Cov\left( (T\t^\s - W_T) \psi(X_T)^2
    \expp{-\t^\s \cdot W_T + \frac{1}{2} |\t^\s|^2 T} \right) [\nabla^2
    v(\t^\s)]^{-1}.
  \end{align*}
\end{theorem}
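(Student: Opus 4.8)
The plan is to view $\t_{n,N_n}$ as an M-estimator and to argue in two stages: first consistency, then asymptotic normality obtained by linearising the first-order optimality condition around $\t^\s$. For the almost sure convergence I would rely on the strong convexity of $v$ established in Lemma~\ref{lem:convex}, which makes $\t^\s$ its unique minimiser and renders $v$ coercive. Proposition~\ref{prop:cvu} provides, for every $K>0$, the a.s. uniform convergence $\sup_{\abs{\t}\le K}\abs{v_{n,N_n}(\t)-v(\t)}\to 0$. The classical argmin-continuity argument then applies: using coercivity of $v$ and uniform convergence on a large enough ball I would first confine the minimisers $\t_{n,N_n}$ to a fixed compact set for $n$ large (a.s.), and then combine uniform convergence with uniqueness of the minimiser of $v$ to obtain $\t_{n,N_n}\to\t^\s$ a.s.

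For the central limit theorem, strong convexity of $v_{n,N_n}$ (Lemma~\ref{lem:convex}) characterises the estimator through the stationarity condition $\nabla v_{n,N_n}(\t_{n,N_n})=0$. A first-order Taylor expansion with integral remainder yields
$$0=\nabla v_{n,N_n}(\t^\s)+H_{n,N_n}\,(\t_{n,N_n}-\t^\s),\qquad H_{n,N_n}=\int_0^1\nabla^2 v_{n,N_n}\big(\t^\s+s(\t_{n,N_n}-\t^\s)\big)\,ds,$$
hence $\sqrt{N_n}(\t_{n,N_n}-\t^\s)=-H_{n,N_n}^{-1}\,\sqrt{N_n}\,\nabla v_{n,N_n}(\t^\s)$. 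To handle $H_{n,N_n}$ I would extend Proposition~\ref{prop:cvu} to the Hessians (the proof is identical, since $\nabla^2\Ec(W,\t)=\big[(T\t-W_T)(T\t-W_T)^*+TI_q\big]\Ec(W,\t)$ retains the required integrability): uniform a.s. convergence of $\nabla^2 v_{n,N_n}$ to $\nabla^2 v$ on compacts, together with the consistency $\t_{n,N_n}\to\t^\s$, gives $H_{n,N_n}\to\nabla^2 v(\t^\s)$ a.s., a matrix invertible by strong convexity of $v$.

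The heart of the proof is the asymptotic normality of the score $\sqrt{N_n}\,\nabla v_{n,N_n}(\t^\s)$. Since $\nabla_\t\Ec(W,\t)=(T\t-W_T)\Ec(W,\t)$, this is a normalised sum over the row-wise i.i.d. triangular array $Y_{n,i}=\psi(X^n_{T,i})^2(T\t^\s-W_{T,i})\expp{-\t^\s\cdot W_{T,i}+\frac12\abs{\t^\s}^2T}$, which I would split as
$$\sqrt{N_n}\,\nabla v_{n,N_n}(\t^\s)=\frac{1}{\sqrt{N_n}}\sum_{i=1}^{N_n}\big(Y_{n,i}-\EE[Y_{n,i}]\big)+\sqrt{N_n}\,\nabla v_n(\t^\s).$$
For the centred part I would invoke a Lindeberg central limit theorem for triangular arrays, as packaged in the doubly-indexed limit theorems of Propositions~\ref{prop:slln2} and~\ref{prop:slln2-u}. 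This requires two uniform estimates, both flowing from $\psi\in\Hc_\alpha$, Property~\ref{eq:P} and the Gaussian moments of $\Ec$: convergence of the covariance $\Var(Y_{n,1})\to\Cov\big((T\t^\s-W_T)\psi(X_T)^2\expp{-\t^\s\cdot W_T+\frac12\abs{\t^\s}^2T}\big)$, obtained from the $L^p$ convergence $X^n_T\to X_T$ and the resulting uniform integrability; and a Lyapunov bound $\sup_n\EE\big[\abs{Y_{n,1}}^{2+\epsilon}\big]<\infty$ via Hölder's inequality.

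The subtle point is the remaining deterministic term $\sqrt{N_n}\,\nabla v_n(\t^\s)$. Because $\t^\s$ minimises $v$ but not $v_n$, we only have $\nabla v_n(\t^\s)\to\nabla v(\t^\s)=0$, so its negligibility is not automatic and forces a constraint linking the budget $N_n$ to the weak-error exponent $\gamma$ of Assumption~\ref{A:general}, of the form $\sqrt{N_n}\,\nabla v_n(\t^\s)\to 0$ (the natural sufficient condition being $N_n=o(n^{2\gamma})$). Granting this, the centred score converges to $N(0,\Sigma)$ with $\Sigma$ the limiting covariance above, and Slutsky's lemma together with $H_{n,N_n}^{-1}\to[\nabla^2 v(\t^\s)]^{-1}$ produces $\sqrt{N_n}(\t_{n,N_n}-\t^\s)\Longrightarrow N(0,\Gamma)$ with the announced sandwich form of $\Gamma$. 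I expect the genuine difficulty to lie in making the triangular-array CLT rigorous uniformly in $n$ while simultaneously identifying the limiting covariance in terms of the limit diffusion value $X_T$ rather than its Euler approximation $X^n_T$ — precisely the uniform control over doubly indexed arrays that Section~\ref{sec:slln} is designed to provide.
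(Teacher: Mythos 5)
Your argument follows, in substance, the same route as the paper. For consistency, the paper also combines the a.s.\ local uniform convergence of Proposition~\ref{prop:cvu} with uniqueness of the minimiser; the only difference is that the paper confines the argmin by a convexity inequality along rays through $\t^\s$ (so that local uniform convergence suffices), whereas you invoke coercivity --- note that your confinement step also tacitly needs the convexity of $v_{n,N_n}$ from Lemma~\ref{lem:convex}, since the uniform convergence is only available on compacts. For the CLT, the paper does not write out the sandwich argument at all: it checks integrability of the sup over compact balls of the gradient and Hessian of the integrand, and then cites \cite[Theorem A2, pp.~74]{RubShapiro}, whose proof is exactly the linearisation you spell out, resting on a.s.\ local uniform convergence of $\nabla v_{n,N_n}$ (your Hessian extension of Proposition~\ref{prop:cvu} plays the same role) and asymptotic normality of the score via Theorem~\ref{thm:lindeberg}.

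Because you make the argument explicit, you surface a point the paper's citation glosses over: the score splits into the centred triangular array plus the deterministic drift $\sqrt{N_n}\,\nabla v_n(\t^\s)$, and since the samples are drawn from the Euler scheme, $\EE[\nabla v_{n,N_n}(\t^\s)]=\nabla v_n(\t^\s)\neq \nabla v(\t^\s)=0$. Theorem~\ref{thm:lindeberg} only yields weak convergence of $\sqrt{N_n}\,(\nabla v_{n,N_n}(\t^\s)-\nabla v_n(\t^\s))$, so a CLT centred at $\t^\s$ does require $\sqrt{N_n}\,\nabla v_n(\t^\s)\to 0$, i.e.\ a growth restriction on $N_n$ relative to $n$ (otherwise the natural centring is $\t_n$, not $\t^\s$). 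The theorem as stated imposes no relation between $N_n$ and $n$ beyond $N_n\to\infty$, and the paper's proof is silent on this term; your observation therefore points to a gap in the paper rather than a defect of your proposal. Two corrections to your write-up, though: the triangular-array CLT you need is Theorem~\ref{thm:lindeberg} of the appendix, not Propositions~\ref{prop:slln2} and~\ref{prop:slln2-u}, which are strong laws of large numbers and cannot deliver weak convergence of the score; and since Condition~\eqref{eq:Hen} is not among the hypotheses of this theorem, the bound available for the drift is the H\"older/strong-error estimate $\abs{\nabla v_n(\t^\s)}=O(n^{-\alpha/2})$ coming from $\psi\in\Hc_\alpha$ and Property~\ref{eq:P}, so the sufficient condition should read $N_n=o(n^{\alpha})$ rather than $N_n=o(n^{2\gamma})$.
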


\begin{proof}
  We already know from Proposition~\ref{prop:cvu} that a.s. $v_{n,N_n}$ converges
  locally uniformly to $v$. Let $\varepsilon >0$. By the strict convexity of
  $v$, $\delta \defeq \inf_{|\t - \t^\s| \ge \varepsilon} v(\t) - v(\t^\s)
  > 0$.  \\
  The local uniform convergence of $v_{n,N_n}$ to $v$ ensures that 
  \begin{align}
    \label{eq:luc}
    \exists n_\delta > 0, \forall n \ge n_\delta, \forall \t \in \RR^q\mbox{ s.t. } |\t -
    \t^\s| \le \varepsilon, \; |v_{n, N_n}(\t) - v(\t)| \le \frac{\delta}{3}.
  \end{align}
  For $n \ge n_\delta$ and $\t$ such that $|\t - \t^\s| \ge \varepsilon$, we can
  deduce from the convexity of $v_{n,N_n}$ that
  \begin{align*}
    v_{n,N_n}(\t) - v_{n,N_n}(\t^\s) & \ge \frac{|\t - \t^\s|}{\varepsilon} \left[
    v_{n,N_n}\left( \t^\s + \varepsilon \frac{\t - \t^\s}{|\t - \t^\s|} \right) -
    v_{n,N_n}(\t^\s) \right] \\
    & \ge \frac{|\t - \t^\s|}{\varepsilon} \left[
    v\left( \t^\s + \varepsilon \frac{\t - \t^\s}{|\t - \t^\s|} \right) -
    v(\t^\s) - \frac{2\delta}{3} \right] \ge \frac{\delta}{3}
  \end{align*}
  where the last two inequalities come from~\eqref{eq:luc}. If we apply this
  inequality for $\t = \t_{n, N_n}$, we obtain a contradiction since
  $v_{n,N_n}(\t_{n,N_n}) - v_{n, N_n}(\t^\s) \le 0$. Hence, we deduce that for
  all $n \ge n_\delta$, $|\t_{n,N_n} - \t^\s| < \varepsilon$. Therefore,
  $\t_{n,N_n}$ converges a.s. to $\t^\s$. If we combine this result with the
  local uniform convergence of $v_{n,N_n}$ to the continuous function $v$, we
  deduce that $v_{n,N_n}(\t_{n,N_n})$ converges a.s. to $v(\t^\s)$.

  Moreover, we get by Equation~\eqref{eq:upper_exp} that for all $K>0$
  \begin{align*}
    & \sup_{|\t| \le K} \abs{\partial_{\t^{(j)}} \psi(X_T)^2 \expp{-\t \cdot W_T +
    \frac{1}{2} |\t|^2 T}} \\
    & \quad \le \expp{K^2 T/2} \psi(X_T)^2 \left(K + (\expp{K W_t^{(j)}} +
    \expp{- K W_t^{(j)}})\right) \prod_{i=1}^q (\expp{K W_t^{(i)}} + \expp{- K
    W_t^{(i)}}). 
  \end{align*}
  The r.h.s is integrable by Condition~\ref{A:nonul}. Hence, $\EE\left[
  \sup_{|\t| \le K} \abs{\nabla_\t \psi(X_T)^2 \expp{-\t \cdot W_T + \frac{1}{2}
  |\t|^2 T}} \right] < +\infty$. Similarly, one can prove that $\EE\left[
  \sup_{|\t| \le K} \abs{\nabla_\t^2 \psi(X_T)^2 \expp{-\t \cdot W_T +
  \frac{1}{2} |\t|^2 T}} \right] < +\infty$. 
  Then, to prove the central limit theorem
  governing the convergence of $\t_{n, N_n}$ to $\t^\s$, we reproduce the proof
  of \cite[Theorem A2, pp. 74]{RubShapiro}, which is mainly based on the a.s. local
  uniform convergence of $\nabla v_{n, N_n}$ and on its asymptotic
  normality ensuing from Theorem~\ref{thm:lindeberg}.
\end{proof}

\subsection{A second stage Monte Carlo approach}\label{sec:mc}

In this section, we aim at building adaptive Monte Carlo estimators in the setting of
discretized diffusion processes following the spirit of \cite{JourLel}. Our setting
differs mainly because we want to let both the number of time steps and the number of samples
go to infinity. Asymptotic results rely on a uniform control of the triangular arrays
involved in the adaptive importance sampling Monte Carlo estimator.  The technical
results from Section~\ref{sec:slln} will be tremendously useful to provide such controls.

Using the estimators of $\t^\s$ studied in the previous section, we define
a Monte Carlo estimator of $\EE[\psi(X_T)]$ based on
Equation~\eqref{eq:is}. We introduce the $\sigma$-algebra $\Gc$
generated by the samples $(W_i)_{i \ge 1}$ used to compute $\t_n$ and
$\t_{n, N_n}$. 

Let $(\tilde W_i)_i$ be i.i.d. samples according to the law of $W$ but
independent of $\Gc$. Conditionally on $\Gc$, we introduce  i.i.d. samples
$(\tilde X_i(\t_{n, N_n}))_i$ following the law of $X(\t_{n, N_n})$ such that
for each $i$, $\tilde X_i(\t_{n, N_n})$ is the solution of the SDE driven by $\tilde W_{i}$.
We introduce $(\tilde\Gc_{k})_{k>0}$ the filtration defined  by
$\tilde\Gc_{k}=\sigma(\tilde W_{i}, 1\leq i\leq k)$ and $\Gc^\sharp_k = \Gc \vee \tilde
\Gc_k$. For each $i>0$, we
also consider $\tilde X_i^{n}(\t_{n, N_n})$ defined as the Euler discretization of
$\tilde X_i(\t_{n, N_n})$.  Based on these new sets of samples, we define
\begin{align*}
  M_{n, N_n} = \frac{1}{N_n} \sum_{i=1}^{N_n} g(\t_{n,N_n}, \tilde
  X^{n}_{T,i}(\t_{n, N_n}), \tilde W_{T,i}), 
\end{align*}
where the function $g : \RR^q \times \RR^d \times \RR^q \rightarrow \RR$ is defined by
\begin{equation}
  \label{eq:fct_g}
  g(\t, x, y) \defeq \psi(x) \expp{-\t \cdot y -\frac{1}{2} |\t|^2T}.
\end{equation}
For the clearness of the coming proofs, it is convenient to introduce the following
notation
\begin{align*}
  M_{n, N_n} (\t) = \frac{1}{N_n} \sum_{i=1}^{N_n} g(\t, \tilde
  X^{n}_{T,i}(\t), \tilde W_{T,i}).
\end{align*}
Note that $M_{n, N_n} = M_{n, N_n} (\t_{n, N_n})$.

\begin{theorem}
  \label{thm:slln} Assume that Assumption~\eqref{eq:Hbsigma} holds and that
  $\psi \in \Hc_\alpha$ for some $\alpha>0$. Then, $M_{n, N_n} \longrightarrow
  \EE[\psi(X_T)]$ a.s. when $n \rightarrow +\infty$.
\end{theorem}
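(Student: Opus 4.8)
The plan is to prove the strong law of large numbers for $M_{n,N_n}$ by decomposing it into a term that converges by a conditional law of large numbers for triangular arrays and a negligible remainder coming from the dependence of the integrand on the estimated parameter $\t_{n,N_n}$. The target is $\EE[\psi(X_T)] = \EE[g(\t^\s, X_T(\t^\s), W_T)]$, so the natural strategy is to write
\[
  M_{n,N_n} = \left( M_{n,N_n}(\t_{n,N_n}) - M_{n,N_n}(\t^\s) \right) + M_{n,N_n}(\t^\s)
\]
and handle the two pieces separately. For the second piece, the samples $(\tilde W_i)_i$ are independent of $\Gc$, so conditionally on $\Gc$ the summands $g(\t^\s, \tilde X_{T,i}^n(\t^\s), \tilde W_{T,i})$ form, for each fixed $n$, an i.i.d.\ sequence whose common mean converges (as $n\to\infty$) to $\EE[g(\t^\s, X_T(\t^\s), W_T)] = \EE[\psi(X_T)]$ by~\eqref{eq:is} and the almost sure convergence $\tilde X_T^n \to \tilde X_T$. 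This is precisely the doubly indexed setting handled by Proposition~\ref{prop:slln2} from Section~\ref{sec:slln}, which I would invoke to obtain $M_{n,N_n}(\t^\s) \to \EE[\psi(X_T)]$ a.s.

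For the first piece, the idea is to control the increment uniformly in the parameter and then use $\t_{n,N_n} \to \t^\s$ a.s.\ from Theorem~\ref{thm:theta_cv}. Concretely, I would establish a uniform (in $n$) version of the SLLN for the random fields $\t \mapsto M_{n,N_n}(\t)$ and $\t \mapsto \nabla_\t M_{n,N_n}(\t)$ on any compact set $\{|\t| \le K\}$, in the same spirit as Proposition~\ref{prop:cvu}. This again reduces to verifying Assumptions~\ref{slln2-fm},~\ref{slln2-u},~\ref{slln2-sup-u} and applying the uniform law Proposition~\ref{prop:slln2-u}: using $\psi \in \Hc_\alpha$, Property~\ref{eq:P}, and the explicit exponential factor $\Ec^-$, the moment and variance bounds are obtained through Cauchy--Schwarz exactly as in the proof of Proposition~\ref{prop:cvu}. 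Once $\sup_{|\t|\le K}|M_{n,N_n}(\t) - \EE[g(\t, X_T(\t), W_T)]| \to 0$ a.s., continuity of the limit $\t \mapsto \EE[g(\t, X_T(\t), W_T)] = \EE[\psi(X_T)]$ in $\t$ (it is in fact constant, equal to $\EE[\psi(X_T)]$, by the importance sampling identity) together with $\t_{n,N_n} \to \t^\s$ forces $M_{n,N_n}(\t_{n,N_n}) \to \EE[\psi(X_T)]$ directly; the decomposition above is then merely a bookkeeping device.

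The cleanest route is therefore: first invoke the uniform SLLN to show $\sup_{|\t| \le K} |M_{n,N_n}(\t) - \EE[\psi(X_T)]| \to 0$ a.s.\ for every $K$; second, note that $\t^\s$ lies in the interior of some ball $\{|\t| \le K\}$ and that eventually $\t_{n,N_n}$ stays in this ball (by its a.s.\ convergence); third, conclude by combining the uniform convergence with the convergence of the random argument. The main obstacle is the uniform control over $\t$ of the triangular array, since the randomness enters both through the weight $\Ec^-(W,\t)$ and through the parameter-dependent diffusion $\tilde X^n(\t)$; the key technical input is the uniform integrability of $\sup_{|\t|\le K} g(\t, \tilde X_T^n(\t), \tilde W_T)$ and its gradient, which I would secure using the elementary inequality $|e^x - e^y| \le |x-y|(e^x + e^y)$ and the polynomial growth of $\psi$ exactly as done at the end of the proof of Proposition~\ref{prop:cvu}. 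Care must also be taken that all convergences are stated conditionally on $\Gc$ and then transferred to unconditional almost sure convergence, which is why the abstract doubly indexed limit theorems of Section~\ref{sec:slln}, tailored to this conditional structure, are the right tool.
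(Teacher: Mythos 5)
Your overall strategy --- a uniform SLLN in the parameter $\t$ combined with the a.s.\ convergence $\t_{n,N_n}\to\t^\s$ --- is logically coherent, but it has a genuine gap at precisely the point you wave away. You claim that the hypotheses of Proposition~\ref{prop:slln2-u} (namely \ref{slln2-u} and \ref{slln2-sup-u}) for the random fields $\t \mapsto g(\t, \tX^{n}_{T,i}(\t), \tilde W_{T,i})$ can be verified ``through Cauchy--Schwarz exactly as in the proof of Proposition~\ref{prop:cvu}''. They cannot: in Proposition~\ref{prop:cvu} the sampled process $X^n_{T,i}$ does \emph{not} depend on $\t$ --- only the exponential weight does --- so the supremum over $\{|\t|\le K\}$ hits only the explicit factor $\Ec^+(W_i,\t)$ and is controlled by \eqref{eq:upper_exp}. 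In your fields the Euler scheme itself is parameter-dependent (its drift is $b+\sigma\t$), so checking \ref{slln2-u} requires $\sup_n \EE\bigl[\sup_{|\t|\le K}\abs{\tX^n_T(\t)}^{p}\bigr]<\infty$, and checking \ref{slln2-sup-u} requires a modulus-of-continuity estimate for $\t\mapsto \tX^n_T(\t)$ uniform in $n$. Both call for an $L^p$-Lipschitz bound $\EE[\abs{X^n_T(\t)-X^n_T(\t')}^p]\le C_p\abs{\t-\t'}^p$ combined with Kolmogorov's continuity criterion, uniformly in $n$. Such estimates are plausible under \eqref{eq:Hbsigma}, but they constitute a substantial piece of work that appears neither in your proposal nor anywhere in the paper; without them your central step, $\sup_{|\t|\le K}\abs{M_{n,N_n}(\t)-\EE[\psi(X_T)]}\to 0$ a.s., is unproven.

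The paper's proof avoids any uniform-in-$\t$ control of random fields by exploiting an observation that you make but do not use: the importance sampling identity \eqref{eq:is} holds \emph{exactly} at the discretized level for every fixed $\t$, so that, conditionally on $\Gc$, the summands $g(\t_{n,N_n}, \tX^{n}_{T,i}(\t_{n,N_n}), \tilde W_{T,i})$ have conditional mean $e_n \defeq \EE[\psi(X^n_T)]$ \emph{whatever} the value of the $\Gc$-measurable parameter $\t_{n,N_n}$. It therefore suffices to center the summands at $e_n$, truncate by the indicator $\ind{\t_{n,N_n}\in\Vc}$ for a compact neighbourhood $\Vc$ of $\t^\s$ (harmless, since $\t_{n,N_n}\to\t^\s$ a.s.\ by Theorem~\ref{thm:theta_cv} makes the indicator eventually equal to $1$), and bound the conditional second moment by $\sup_{\t\in\Vc} v_n(\t)$. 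The latter is a supremum of \emph{deterministic convex} functions converging pointwise to the convex continuous function $v$, hence locally uniformly, so it is bounded in $n$; the scalar Proposition~\ref{prop:slln2} then applies directly, and the conclusion follows since $e_n \to \EE[\psi(X_T)]$. If you wish to keep your route, you must supply the Kolmogorov-type estimates above; otherwise, conditioning on $\Gc$ is the mechanism that makes the randomness of the parameter cost nothing.
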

\begin{proof}
  Using the conditional independence of the samples $(\tilde X_i^{n}(\t_{n,
  N_n}), \tilde W_i)_i$, we have 
  \begin{align*}
    \EE[ g( \t_{n,N_n}, \tilde X^{n}_{T,i}(\t_{n, N_n}), \tilde W_{T,i})
    | \Gc ] = \EE[\psi(X^n_T)] \defeq e_n \quad \mbox{for all $i>0$}.
  \end{align*}
  Let $\Vc \subset \RR^q$ be a compact neighbourhood of $\t^\s$. We define the
  sequence 
  $$
  Y_{i,n} = \left(g(\t_{n,N_n}, \tilde
    X^{n}_{T,i}(\t_{n, N_n}), \tilde W_{T,i}) - e_n \right)
  \ind{\t_{n,N_n} \in \Vc}
  $$
  and its empirical average $\overline Y_{m,n} = \inv{m} \sum_{i=1}^m Y_{i,n}$
  for all $m>0$. It is obvious that $\EE[Y_{i,n}] = 0$ and using the conditional
  independence $\EE[\abs{\overline Y_{m,n}}^2] = \frac{1}{m}
  \EE[\abs{Y_{1,n}}^2]$.
  \begin{align*}
    \EE[ \abs{Y_{1,n}}^2] & \le \EE\left[ \EE\left[ |g(\t_{n,N_n},
    \tilde X^{n}_{T,i}(\t_{n, N_n}), \tilde W_{T,i}) - e_n|^2 \Big| \Gc \right]
    \ind{\t_{n,N_n} \in \Vc}\right] \\
    & \le \EE\left[ v_n(\t_{n,N_n}) \ind{\t_{n,N_n} \in \Vc}\right] 
    \le \sup_{\t \in \Vc} v_n(\t).
  \end{align*}
  We know that $v_n$ is convex and converges point-wise to $v$, which is also
  convex and continuous. Hence, $v_n$ converges locally uniformly to $v$, which
  implies that for all compact sets $K\subset \RR^q$, $\lim_{n \rightarrow +\infty} \sup_{\t \in K} v_n(\t) = \sup_{\t
    \in K} v(\t)$. Hence, $\sup_n \sup_{\t \in \Vc} v_n(\t) < + \infty$. 
  Applying Proposition~\ref{prop:slln2} proves that
  ${\overline Y_{N_n,n} \xrightarrow[n \rightarrow +\infty]{a.s.} 0}$. As $\t_{n, N_n}$
  converges a.s. to $\t^\s \in K$, this also implies that 
  $\lim_{n \rightarrow +\infty} M_{n, N_n} = \EE[\psi(X_T)]$ a.s.
\end{proof}

\begin{theorem}
  \label{thm:clt} Under the assumptions of Theorem~\ref{thm:slln} and if
  Condition~\eqref{eq:Hen} holds, we have  $$\sqrt{N_n}
  (M_{n, N_n} - \EE[\psi(X_T)]) \Longrightarrow \Nc(C_{\psi}(T,\alpha),\sigma^2) \mbox{
  when } n\rightarrow +\infty.$$
where $\sigma^2=  \EE\Bigl[\psi( X_{T})^2 \expp{-\t^\s\cdot  W_{T}+\frac{1}{2}|\t^\s|^2T}\Bigr]-\EE[\psi(X_T)]^2.$
\end{theorem}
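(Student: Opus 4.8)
The plan is to split the error into a deterministic bias and a centred stochastic fluctuation,
\[
  \sqrt{N_n}(M_{n, N_n} - \EE[\psi(X_T)]) = \sqrt{N_n}\bigl(M_{n, N_n}(\t_{n,N_n}) - e_n\bigr) + \sqrt{N_n}\bigl(e_n - \EE[\psi(X_T)]\bigr),
\]
and to recombine the limits by Slutsky's lemma. The second, purely deterministic, term is handled by Condition~\eqref{eq:Hen}: since $e_n = \EE[\psi(X^n_T)]$, writing it as $(\sqrt{N_n}/n^\gamma)\, n^\gamma(\EE\psi(X^n_T) - \EE\psi(X_T))$ and using the balancing choice $\sqrt{N_n}\sim n^\gamma$ makes it converge to the constant $C_\psi$ of~\eqref{eq:Hen}; this is the source of the Gaussian mean.

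The core is the fluctuation term, which I would treat by conditioning on the $\sigma$-algebra $\Gc$ carrying the first-stage samples. The decisive simplification is that $(\tilde W_i)_i$ is independent of $\Gc$ while $\t_{n,N_n}$ is $\Gc$-measurable, so the importance sampling identity $\EE[g(\t, X^n_T(\t), W_T)] = \EE[\psi(X^n_T)] = e_n$, valid for \emph{every} fixed $\t$, shows that conditionally on $\Gc$ the summands $g(\t_{n,N_n}, \tilde X^{n}_{T,i}(\t_{n,N_n}), \tilde W_{T,i})$ are i.i.d. with the exact deterministic mean $e_n$ and conditional variance $v_n(\t_{n,N_n}) - e_n^2$. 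I would therefore apply the triangular array Lindeberg theorem (Theorem~\ref{thm:lindeberg}) conditionally on $\Gc$, through the conditional characteristic function $u \mapsto \EE[\expp{iu\sqrt{N_n}(M_{n,N_n}-e_n)}\mid\Gc]$.

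Two ingredients feed this conditional central limit theorem. First, the conditional variance converges almost surely to $\sigma^2$: from $\t_{n,N_n}\to\t^\s$ a.s. (Theorem~\ref{thm:theta_cv}), the local uniform convergence $v_n\to v$ with $v$ continuous (convexity together with pointwise convergence, as already used in the proof of Theorem~\ref{thm:slln}), and $e_n\to\EE[\psi(X_T)]$, one gets $v_n(\t_{n,N_n}) - e_n^2 \to v(\t^\s) - \EE[\psi(X_T)]^2 = \sigma^2$. Second, the conditional Lindeberg condition follows from the uniform moment bound $\sup_n\sup_{|\t|\le K}\EE[\abs{g(\t,X^n_T(\t),W_T)}^{2a}] < \infty$ for some $a>1$ obtained in the proof of Proposition~\ref{prop:cvu} (from $\psi\in\Hc_\alpha$ and Property~\ref{eq:P}), restricted to the event $\{\t_{n,N_n}\in\Vc\}$ for a compact neighbourhood $\Vc$ of $\t^\s$, whose probability tends to $1$; an $L^{a}$ bound with $a>1$ yields uniform integrability of the squared summands, hence the vanishing of the Lindeberg truncation. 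These two facts give $\EE[\expp{iu\sqrt{N_n}(M_{n,N_n}-e_n)}\mid\Gc]\to\expp{-u^2\sigma^2/2}$ a.s., and bounded convergence lifts this to the unconditional characteristic function, so that $\sqrt{N_n}(M_{n,N_n}(\t_{n,N_n}) - e_n)\Longrightarrow\Nc(0,\sigma^2)$. Slutsky's lemma combined with the deterministic bias then yields the stated $\Nc(C_\psi(T,\alpha),\sigma^2)$ limit.

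I expect the main obstacle to be the uniform control required for the conditional Lindeberg step: the summands form a triangular array indexed by $n$ whose law depends both on $n$, through the Euler scheme, and on the random, $\Gc$-measurable parameter $\t_{n,N_n}$. Decoupling these two sources of randomness by conditioning on $\Gc$ and then propagating the almost sure convergence $\t_{n,N_n}\to\t^\s$ through the moment and variance estimates — so that the conditional variance and the Lindeberg quantities converge for almost every realisation of $\Gc$ — is the delicate point, and it is exactly where the uniform $L^{2a}$ bounds and the doubly indexed limit theorems of Section~\ref{sec:slln} are indispensable.
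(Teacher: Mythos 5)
Your proposal is correct, but it takes a genuinely different route from the paper's. The paper decomposes around the fixed-parameter estimator, writing $\sqrt{N_n}(M_{n,N_n}-\EE[\psi(X_T)]) = \sqrt{N_n}(M_{n,N_n}(\t_{n,N_n})-M_{n,N_n}(\t^\s)) + \sqrt{N_n}(M_{n,N_n}(\t^\s)-\EE[\psi(X_T)])$: the second term carries both the bias and the Gaussian limit and is dispatched by citing the central limit theorem for Euler Monte Carlo (Duffie--Glynn), while all the work goes into proving that the first term vanishes in probability, via a splitting on the event $\{N_n^\alpha|\t_{n,N_n}-\t^\s|\le 1\}$ (which uses the $\sqrt{N_n}$-rate of Theorem~\ref{thm:theta_cv}, not just consistency), Markov's inequality, and uniform integrability of the differences $g(\t_{n,N_n},\tilde X^n_T(\t_{n,N_n}),\tilde W_T)-g(\t^\s,\tilde X^n_T(\t^\s),\tilde W_T)$. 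You instead split off the deterministic bias $e_n-\EE[\psi(X_T)]$, handled by~\eqref{eq:Hen} together with the balancing $\sqrt{N_n}\sim n^\gamma$ (an assumption the paper also uses implicitly through the quoted Euler Monte Carlo CLT), and you prove the Gaussian fluctuation directly by a conditional Lindeberg--Feller argument, exploiting that given $\Gc$ the summands are i.i.d.\ with exact mean $e_n$ and variance $v_n(\t_{n,N_n})-e_n^2$. Your route buys three things: it needs only the almost sure convergence $\t_{n,N_n}\to\t^\s$ (the rate part of Theorem~\ref{thm:theta_cv} is never invoked), it is self-contained rather than resting on an external CLT reference, and it makes transparent why the limiting variance is $v(\t^\s)-\EE[\psi(X_T)]^2$; the paper's route is shorter given the citation and avoids redoing a Lindeberg argument. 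One small point of care in your write-up: the bound you quote from Proposition~\ref{prop:cvu} concerns $\EE[|\psi(X^n_T)^2\Ec^+(W,\t)|^a]$, whereas the quantity your Lindeberg step needs is $\EE[|g(\t,X^n_T(\t),W_T)|^{2a}]$; this is not literally the same bound, but one application of Girsanov's theorem (exactly as in the paper's computation~\eqref{eq:tcl-rem-1}) rewrites it as $\EE\bigl[|\psi(X^n_T)|^{2a}\expp{-(2a-1)\t\cdot W_T+(a-\frac{1}{2})|\t|^2T}\bigr]$, after which the same Cauchy--Schwarz and uniform Euler-moment estimates give the bound uniformly in $n$ and in $\t$ over compact sets, so the gap is cosmetic rather than substantive.
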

\begin{remark}
  \label{rem:pasfixe} Assume the number of time steps used in the Euler scheme is fixed to
  $n=1$ and consider the estimator $M_{1, N}(\t_{1, N})$. Then, we know from \cite[Theorem
  3.4]{Lel13} that, when $N \to \infty$,
  \begin{align*}
    & M_{1, N}(\t_{1, N}) \longrightarrow \EE[g(\t_1, X^1_T(\t_1), W_T)] \quad a.s. \\
    & \sqrt{N} (M_{1, N}(\t_{1, N}) - \EE[g(\t_1, X^1_T(\t_1), W_T)]) 
    \Longrightarrow  \Nc(0, \sigma_1^2)
  \end{align*}
  with $\sigma_1^2=  \EE\Bigl[\psi( X^1_{T})^2 \expp{-\t_1\cdot
    W_{T}+\frac{1}{2}|\t_1|^2T}\Bigr]-\EE[\psi(X^1_T)]^2.$
\end{remark}
\begin{proof}
  We can write the left hand side of the convergence result by introducing
  $M_{n,N_n}(\t^\s)$
  \begin{align*}
    \sqrt{N_n} (M_{n, N_n} - \EE[\psi(X_T)])  = \sqrt{N_n} (M_{n,
    N_n}(\t_{n,N_n}) - M_n(\t^\s)) + \sqrt{N_n} (M_{n, N_n}(\t^\s) - \EE[\psi(X_T)])  
  \end{align*}
  The convergence of the last term on the r.h.s $\sqrt{N_n} (M_{n, N_n}(\t^\s) -
  \EE[\psi(X_T)])$ is governed by the central limit theorem for Euler 
  Monte Carlo, which yields the announced limit (see \cite{DuffieGlynn95}).
  It remains to prove that
  $\sqrt{N_n} (M_{n, N_n}(\t_{n,N_n}) - M_{n,N_n}(\t^\s))$ converges to zero in
  probability.

  Let $\varepsilon >0$ and $\alpha < \frac{1}{2}$,
  \begin{align*}
    &  \PP\left( \sqrt{N_n}  |M_{n, N_n}(\t_{n,N_n}) - M_{n,N_n}(\t^\s)| > \varepsilon
    \right)   \\ 
    &= \quad  \PP\left( \sqrt{N_n} |M_{n, N_n}(\t_{n,N_n}) - M_{n,N_n}(\t^\s)| >
    \varepsilon \: ; \; N_n^\alpha |\t_{n,N_n} - \t^\s| > 1 \right) \\
    & \quad +\PP\left( \sqrt{N_n} |M_{n, N_n}(\t_{n,N_n}) - M_{n,N_n}(\t^\s)| > \varepsilon
    \: ; \; N_n^\alpha |\t_{n,N_n} - \t^\s| \le 1  \right)  \\
    &= \quad  \PP\left( \; N_n^\alpha |\t_{n,N_n} - \t^\s| > 1 \right) \\
    & \quad +\PP\left( \sqrt{N_n} |M_{n, N_n}(\t_{n,N_n}) - M_{n,N_n}(\t^\s)|
      \ind{N_n^\alpha |\t_{n,N_n} - \t^\s| \le 1} > \varepsilon \right).
  \end{align*}
  By Theorem~\ref{thm:theta_cv}, $\PP\left( \; N_n^\alpha |\t_{n,N_n} - \t^\s| >
  1 \right)$ tends to zero when $n$ goes to infinity. Let $K>0$ s.t.  for all
  $n$ large enough $\{\t \in \RR^q \: : \: |\t - \t^\s| \le N_n^{-\alpha} \}
  \subset B(0,K)$.  We can bound the second
  term on the r.h.s. by using Markov's inequality 
  \begin{align*}
    & \PP\left( \sqrt{N_n} |M_{n, N_n}(\t_{n,N_n}) - M_{n,N_n}(\t^\s)|
    \ind{N_n^\alpha |\t_{n,N_n} - \t^\s| \le 1} > \varepsilon \right)  \\
    & \le  \frac{N_n}{\varepsilon^2} \EE\left[ |M_{n, N_n}(\t_{n,N_n}) -
    M_{n,N_n}(\t^\s)|^2
    \ind{\t_{n,N_n}\in B(0,K)} \right] \\
    & \le \frac{1}{\varepsilon^2} \EE\left[ |g(\t_{n,N_n}, \tilde X_{T}^{n}(
      \t_{n,N_n}), \tilde W_T)  - g(\t^\s, \tilde X_{T}^{n}(\t^\s), \tilde W_T)|^2
    \ind{\t_{n,N_n} \in B(0,K)} \right] \\
    & \le \frac{1}{\varepsilon^2} \EE\left[ |g(\t_{n,N_n}, \tilde X_{T}^{n}
      (\t_{n,N_n}), \tilde W_T)  - g(\t_{n,N_n}, \tilde X_{T}(\t_{n,N_n}), \tilde W_T)|^2
    \ind{\t_{n,N_n} \in B(0,K)} \right]  \\
    & \quad + \frac{1}{\varepsilon^2} \EE\left[ |g(\t_{n,N_n}, \tilde X_{T}(\t_{n,N_n})
      \tilde W_T)  - g(\t^\s, \tilde X_{T}^{n}(\t^\s), \tilde W_T)|^2
    \ind{\t_{n,N_n} \in B(0,K)} \right].
  \end{align*}
  We treat each of the two terms separately.

  \noindent \textbf{$\blacktriangleright$ First term}

  From the independence between $\t_{n,N_n}$ and $\tilde W$, we can write
  \begin{align*}
    & \EE\left[ |g(\t_{n,N_n}, \tilde X_{T}^{n}(\t_{n,N_n}), \tilde W_T)  -
      g(\t_{n,N_n}, \tilde X_{T}(\t_{n,N_n}), \tilde W_T)|^2
    \ind{\t_{n,N_n} \in B(0,K)} \right]   \\ 
    & \quad = \EE\left[ |\psi(X_T^n) - \psi(X_T)|^2 \exp(-\t_{n,N_n} \cdot \tilde W_T +
    \frac{1}{2} |\t_{n,N_n}|^2 T)  \ind{\t_{n,N_n} \in B(0,K)} \right]  \\
    & \quad \le \EE\left[ |\psi(X_T^n) - \psi(X_T)|^{2(1+\eta)}
    \right]^{\frac{1}{1+\eta}}  \expp{\frac{1+2\eta}{2\eta} K^2 T},\quad \mbox{for some }\eta>0.
  \end{align*}
  Relying on the uniform integrability ensured by property \ref{eq:P} and
  since $\psi\in\Hc_{\alpha}$, we can let $n$ go to infinity inside the
  expectation to obtain that
  $$
  \lim_{n \rightarrow +\infty} \EE\left[ |g(\t_{n,N_n}, \tilde X_{T}^{n}
    (\t_{n,N_n}), \tilde W_T)  - g(\t_{n,N_n}, \tilde X_{T}(\t_{n,N_n}), \tilde W_T)|^2
    \ind{\t_{n,N_n} \in B(0,K)} \right]  = 0.
  $$

  \noindent \textbf{$\blacktriangleright$ Second term}

  Since the function $g$ is continuous w.r.t its first two parameters and
  $X_T^{\t}$ is continuous w.r.t the parameter $\t$, $\lim_{n \rightarrow
  +\infty} g(\t_{n,N_n}, \tilde X_{T}(\t_{n,N_n}), \tilde W_T)  - g(\t^\s,
  \tilde X_{T}^n(\t^\s), \tilde W_T) = 0$ a.s. To conclude the proof, we need
  to show that the family of r.v. 
  $$
  \left(|g(\t_{n,N_n}, \tilde X_{T}(
    \t_{n,N_n}), \tilde W_T)  - g(\t^\s, \tilde X_{T}^{n}(\t^\s), \tilde W_T)|^2
  \ind{\t_{n,N_n} \in B(0,K)}\right)_n
  $$
  is uniformly integrable. 
  
  First, for any $\t \in \RR^q$ and $2(1+\eta)>a>2$
  \begin{align}
    \label{eq:tcl-rem-1}
    \EE\left[|g(\t, \tilde X_{T}(\t), \tilde W_T)|^a\right] & =
    \EE \left[ |\psi(\tilde X_{T})|^a \expp{- (a-1) \t\cdot \tilde W_T +
    \frac{(a-1) |\t|^2 T}{2} }\right] \nonumber \\
    & \le \EE \left[ |\psi(\tilde X_{T})|^{2(1+\eta)}
    \right]^{\frac{2(1+\eta)}{a}} \expp{C |\t|^2}
  \end{align}
  where $C$ is a constant only depending on $a$ and $T$.
  This yields that for some $\delta  >0$ and some constant $C>0$
  independent of $\t$, $ \EE\left[ |g(\t, \tilde X_{T}(\t), \tilde W_T)|^{2+\delta}
  \right] < C \expp{C |\t|^2}$.  Then, we get
 \begin{align*}
  & \sup_n \EE\left[|g(\t_{n,N_n}, \tilde X_{T}(\t_{n,N_n}), \tilde
   W_T)|^{2+\delta}\ind{\t_{n,N_n} \in B(0,K)} \right] \\
   & =
   \sup_n \EE\left[\EE\left[|g(\t_{n,N_n}, \tilde X_{T}(\t_{n,N_n}), \tilde W_T)|^{2+\delta}
   | \t_{n,N_n} \right]\ind{\t_{n,N_n} \in B(0,K)}  \right] \\
   & \le \sup_n  C \EE\left[ \expp{C |\t_{n,N_n}|^2} \ind{\t_{n,N_n} \in B(0,K)}
   \right] \le C \expp{C K}.
  \end{align*}
  We can similarly obtain that
  \begin{align*}
    & \sup_n \EE\left[|g(\t^\s, \tilde X_{T}^{n}(\t^\s), \tilde
      W_T)|^{2+\delta}\right] \le \sup_n \EE\left[ |\psi(X_T^n)|^{2(1+\eta)}
    \right]^{\frac{2(1+\eta)}{2+\delta}} \expp{C|\t^\s|^2}.
  \end{align*}
  This proves that the family of r.v. $$\left(|g(\t_{n,N_n}, \tilde X_{T}
    (\t_{n,N_n}), \tilde W_T) - g(\t^\s, \tilde X_{T}^{n}(\t^\s), \tilde W_T)|^2
    \ind{\t_{n,N_n} \in B(0,K)}\right)_n$$ is uniformly integrable, which ends the
  proof.
\end{proof}

\section{Multilevel Importance sampling Monte Carlo}\label{sec:mul}

In the recent years, many works showed that MLMC supersedes Monte Carlo when combined with
discretization schemes. Then, it has become natural to investigate how this new approach
could be coupled with existing variance reduction techniques and in particular with
importance sampling.  In this section, we study the mathematical properties of our
importance sampling MLMC estimator $Q_L(\hat \lambda_0, \dots, \hat \lambda_L)$. First, we
start by proving the existence and uniqueness of $\hat \lambda_0, \dots, \hat \lambda_L$
in Section~\ref{sec:cv-mltheta} and then we prove a strong law of large numbers and a
central limit theorem for $Q_L(\hat \lambda_0, \dots, \hat \lambda_L)$ in
Section~\ref{sec:ml-cv}.

\subsection{General framework}
\label{sec:ml-framework}

Our multilevel importance sampling estimator writes
\begin{align}
  \label{eq:estimator}
  Q_L(\la_0,\dots,\la_L) & =\frac{1}{N_0}\sum_{k=1}^{N_0}
  \psi( \tX^{m^0}_{T,0,k}(\la_0)) \Ec^-(\tilde W_{0,k}, \la_0) \nonumber \\
  & \qquad +\sum_{\ell=1}^{L}\frac{1}{N_\ell}
  \sum_{k=1}^{N_\ell}\left(\psi( \tX^{m^{\ell}}_{T,\ell,k}(\la_{\ell}))-\psi(
  \tX^{m^{\ell-1}}_{T,\ell,k}(\la_{\ell}))\right) \Ec^-(\Tilde W_{\ell,k}, \la_\ell).
\end{align}
For any fixed $\l \in\{1,\cdots,L\}$, the random variables $(\tilde W_{\l,k})_{1
  \le k \le N_\l}$ are independent and are distributed according to the Brownian
law. We assume that for $\l, \l' \in\{1,\cdots,L\}$, with $\l \ne \l'$, the
blocks $(\tilde W_{\l,k})_{1 \le k \le N_\l}$ and $(\tilde W_{\l',k})_{1 \le k
  \le N_{\l'}}$ are independent.  For any fixed $\l \in\{1,\cdots,L\}$ and $k \in
\{1, \dots, N_\l\}$, the variables $\tX^{m^{\ell}}_{T,\l,k}(\lambda_\l)$ (resp.
$\tX^{m^{\ell-1}}_{T,\l,k}(\lambda_\l)$) are the terminal values of the Euler
schemes of $X(\lambda_\l)$ with $m^\l$ (resp. $m^{\l-1}$) time steps built using
the same Brownian path $\tilde W_{\l, k}$. The key of the multilevel approach
is to use the same Brownian path to compute $\tX^{m^{\ell}}_{T,\l,k}(\lambda_\l)$ 
and $\tX^{m^{\ell-1}}_{T,\l,k}(\lambda_\l)$.  The blocks of random variables used in
two different levels are independent. From these assumptions, one can compute
the variance of the multilevel estimator given by 
$$
\Var[Q_L]= N_0^{-1} \sigma_0(\la_0)^2 +\sum_{\ell=1}^L N^{-1}_{\ell} \frac{(m-1)T}{m^\l}
\sigma_{\ell}^2(\lambda_{\ell})
$$
where 
\begin{align*}
  \sigma_0^2 (\la_0)  & \defeq \Var[\psi(X^{m^0}_{T}(\lambda_0)) \Ec^-(W,\lambda_0)] \\
  \sigma_{\ell}^2(\lambda_\l) & \defeq \frac{m^\l}{(m-1) T} \Var\left[\left\{\psi(
      X^{m^{\ell}}_{T}(\lambda_\l)) - \psi(
  X^{m^{\ell-1}}_{T}(\lambda_\l))\right\}\Ec^-(W, \la_\l) \right].
\end{align*}
By applying~\eqref{eq:is}, the variances of each level $\l \ge 0$ can be written
$\sigma^2_\l(\la_\l) = v_\l(\lambda_\l) - \Xi_\l^2$
with
\begin{align}
  \label{eq:v_xi_0}
  v_0(\lambda_0) &\defeq \EE\left[ \psi( X^{m^{0}}_{T})^2 
  \Ec^+(W, \la_0) \right], \quad
  \Xi_0  \defeq \EE\left[\psi( X^{m^0}_{T}) \right] \\
  \label{eq:vl}
  v_\l(\lambda_\l) &\defeq \frac{m^\l}{(m-1)T} \EE\left[ \abs{\psi( X^{m^{\ell}}_{T}) - \psi(
  X^{m^{\ell-1}}_{T})}^2 \Ec^+(W, \la_\l) \right], \\
  \label{eq:xi_l}
  \Xi_\l  &\defeq \sqrt{\frac{m^\l}{(m-1)T}} \EE\left[\psi( X^{m^{\ell}}_{T}) - \psi(
  X^{m^{\ell-1}}_{T})\right]
\end{align}
and $ \Ec^+(W, \lambda) \defeq \expp{-\lambda \cdot W_{T} + \inv{2} \abs{\lambda}^2 T}.$
Hence, the global variance is given by
$$
\Var[Q_L]=N^{-1}_{0} (v_0(\la_0) - \Xi_0^2) +\sum_{\ell=1}^L N^{-1}_{\ell}  \frac{(m-1)T}{m^\l}\left(
  v_{\ell}(\lambda_{\ell}) -  \Xi_\l^2  \right).
$$
To actually minimize the functions $\la \longmapsto
v_{\ell}^2(\lambda)$, we consider the sample average approximation of $v_\l$ with
$N_\l'$ samples 
\begin{align*}
  v_{0, N_0'}(\lambda_0) &\defeq \frac{1}{N_0'} \sum_{k=1}^{N_0'} \psi(
  X^{m^0}_{T,0, k})^2 \Ec^+(W_{0,k}, \la_0), \\
  v_{\l, N_\l'}(\lambda_\l) &\defeq \frac{1}{N_\l'} \sum_{k=1}^{N_\l'} \frac{m^\l}{(m-1)T} \abs{\psi(
  X^{m^{\ell}}_{T,\l, k}) - \psi( X^{m^{\ell-1}}_{T,\l, k})}^2 \Ec^+(W_{\l,k}, \la_\l).
\end{align*}

\subsection{Convergence of the importance sampling parameters}
\label{sec:cv-mltheta}

From Lemma~\ref{lem:convex}, we deduce that $v_{\l, N_\l'}$ has a unique minimum
\begin{align*}
  \widehat \la_{\l} = \arg\min_{\la \in \RR^q}  v_{\l, N_\l'}(\la).
\end{align*}

\begin{theorem}
  \label{thm:cv_var} Assume $b$ and $\sigma$ are $C^1$ with bounded
  derivatives, $\psi\in\Hc_\alpha$ for some $\alpha \ge 1$, $\psi$ is $C^1$ and $\nabla\psi$ has
  polynomial growth. Then, the sequence of random functions $(v_{\l,N_\l'} :
  \lambda \in \RR^q \rightarrow v_{\l, N_{\l}'} (\lambda))_\l$  converges a.s.
  locally uniformly to the strongly convex function $v: \RR^q \rightarrow \RR$
  defined by 
  \begin{equation}
    \label{eq:variance-ml}
    v(\lambda)\defeq \EE\left[ \left(
        \nabla \psi(X_T) \cdot U_T\right)^2 \Ec^+(W, \lambda) \right]
  \end{equation}
  with 
  \begin{align}
    \label{eq_U_theta}
    dU_t=\nabla b(X_t) U_t dt+\sum_{j=1}^q\nabla{\sigma}_j(X_t)U_t
    dW^j_t-\frac{1}{\sqrt{2}}\sum_{ij,=1}^q\nabla{\sigma}_j(X_t){\sigma}_{i}(X_t)d
    \check W^{i,j}_t
  \end{align}
  where $\check W$ is a Brownian motion independent of
  $W$ with values in $\RR^{q \times q}$.

  Moreover,
  $\widehat \lambda_{\l}$ converges a.s. to $\la^\s \defeq \arg\min_\lambda
  v(\lambda)$, when $\l \to +\infty$.
\end{theorem}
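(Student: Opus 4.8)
The plan is to recognise $v_{\l,N_\l'}$ as an empirical mean over $N_\l'$ i.i.d.\ copies of the summand
\[
  \xi_\l(\lambda)\defeq\frac{m^\l}{(m-1)T}\,\abs{\psi(X^{m^\l}_{T})-\psi(X^{m^{\l-1}}_{T})}^2\,\Ec^+(W,\lambda),
\]
whose distribution changes with $\l$, so that $(v_{\l,N_\l'})_\l$ is a doubly indexed triangular array and the natural tool is the uniform strong law of large numbers of Proposition~\ref{prop:slln2-u}, with $\l$ playing the role of the running index. Accordingly I would proceed in three steps: (i) identify the pointwise limit of $\EE[\xi_\l(\lambda)]$ and check it equals $v(\lambda)$ of~\eqref{eq:variance-ml}; (ii) verify the moment, variance and equicontinuity controls of Proposition~\ref{prop:slln2-u}, which delivers the a.s.\ locally uniform convergence $v_{\l,N_\l'}\to v$; and (iii) deduce the convergence of the minimisers exactly as in the proof of Theorem~\ref{thm:theta_cv}.

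Set $D_\l\defeq\sqrt{\tfrac{m^\l}{(m-1)T}}\bigl(\psi(X^{m^\l}_{T})-\psi(X^{m^{\l-1}}_{T})\bigr)$, so that $v_\l(\lambda)=\EE[D_\l^2\,\Ec^+(W,\lambda)]$ by~\eqref{eq:vl}. The crux is the asymptotic behaviour of the rescaled difference between two consecutive Euler schemes: by the central limit theorem for the multilevel Monte Carlo Euler method (see~\cite{BAK2014}), the rescaled error $\sqrt{\tfrac{m^\l}{(m-1)T}}(X^{m^\l}_{T}-X^{m^{\l-1}}_{T})$ converges stably to $U_T$, the solution at time $T$ of~\eqref{eq_U_theta}, the Brownian motion $\check W$ independent of $W$ encoding the asymptotic decorrelation of the two discretisation grids. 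Since $\psi$ is $C^1$ with $\nabla\psi$ of polynomial growth, a first-order Taylor expansion combined with Property~\ref{eq:P} gives $D_\l\Longrightarrow\nabla\psi(X_T)\cdot U_T$. As $\Ec^+(W,\lambda)$ is a fixed (in $\l$) function of $W_T$, combining this stable convergence with the uniform moment bounds below lets me pass to the limit in the product, yielding $\lim_\l\EE[\xi_\l(\lambda)]=v(\lambda)$ for each fixed $\lambda$, i.e.\ Assumption~\ref{slln2-fm}.

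The remaining hypotheses of Proposition~\ref{prop:slln2-u} are uniform-in-$\l$ controls, checked as in the proof of Proposition~\ref{prop:cvu}. Property~\ref{eq:P} yields $\sup_\l\EE[\abs{D_\l}^{2p}]<\infty$ for every $p\ge1$ (the rescaling compensates the $L^{2p}$ rate of the Euler scheme, and the polynomial growth of $\nabla\psi$ is absorbed by the moments of $X$), while the weight $\sup_{\abs{\lambda}\le K}\Ec^+(W,\lambda)$ has finite moments of all orders; Cauchy--Schwarz then gives the convergence of the means and the variance bounds~\ref{slln2-u}. The local equicontinuity~\ref{slln2-sup-u} follows from the elementary inequality $\abs{\expp x-\expp y}\le\abs{x-y}(\expp x+\expp y)$ together with Remark~\ref{rem:iid}, precisely as in Proposition~\ref{prop:cvu}. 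Proposition~\ref{prop:slln2-u} then delivers the a.s.\ locally uniform convergence of $v_{\l,N_\l'}$ to $v$. Strong convexity of $v$ is obtained as in Lemma~\ref{lem:convex} by differentiating under the expectation:
\[
  \nabla^2 v(\lambda)=\EE\Bigl[(\nabla\psi(X_T)\cdot U_T)^2\bigl(T I_q+(W_T-T\lambda)(W_T-T\lambda)^*\bigr)\Ec^+(W,\lambda)\Bigr],
\]
which is positive definite as soon as $\PP(\nabla\psi(X_T)\cdot U_T\ne0)>0$, the $T I_q$ term alone bounding it below.

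Finally, writing $\lambda^\s=\arg\min v$, the convergence $\widehat\lambda_\l\to\lambda^\s$ follows verbatim from the contradiction argument of Theorem~\ref{thm:theta_cv}: strict convexity of $v$ gives $\delta\defeq\inf_{\abs{\lambda-\lambda^\s}\ge\varepsilon}v(\lambda)-v(\lambda^\s)>0$, and the a.s.\ locally uniform convergence together with the convexity of $v_{\l,N_\l'}$ forces $\abs{\widehat\lambda_\l-\lambda^\s}<\varepsilon$ for all $\l$ large enough. \emph{The main obstacle is the key limit of the second paragraph}: establishing the stable convergence of the rescaled Euler difference and, above all, upgrading it to convergence of the weighted second moments $\EE[D_\l^2\,\Ec^+(W,\lambda)]$ despite the unbounded exponential weight. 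This is where the sharp $L^{2p}$ estimates of Property~\ref{eq:P} and the $C^1$ polynomial-growth assumptions on $\psi$ are essential, and where the bulk of the technical effort lies.
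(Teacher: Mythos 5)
Your proposal is correct and follows essentially the same route as the paper: you treat $v_{\l,N_\l'}$ as a doubly indexed triangular array and invoke Proposition~\ref{prop:slln2-u}, you identify the pointwise limit of the level variances via a Taylor expansion of $\psi$ and the stable convergence of the rescaled Euler differences (which is exactly the content of the paper's Proposition~\ref{prop:conv-moments-level}, relying on Theorem~\ref{th-acc} of Ben Alaya and Kebaier), you verify the uniform moment, variance and equicontinuity hypotheses using $\psi\in\Hc_\alpha$ with $\alpha\ge 1$ and the strong Euler rates just as the paper does, and you conclude the convergence of $\widehat\lambda_\l$ by the convexity argument of Theorem~\ref{thm:theta_cv}. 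The only differences are cosmetic: your explicit Hessian computation for the strong convexity of $v$ is a welcome addition the paper leaves implicit, and the stable-convergence result you need is the one cited in the paper as \cite{BAK2015} rather than \cite{BAK2014}.
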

\begin{proof}
  Let us define the doubly indexed sequence
  \begin{align*}
    Y_{k,\l}(\lambda) = &  \frac{m^\l}{(m-1)T} \abs{\psi(
      X^{m^{\ell}}_{T,k}) - \psi( X^{m^{\ell-1}}_{T,k})}^2 
    \Ec^+(W_k, \lambda).
  \end{align*}
  For any fixed $\l$, the sequence $(Y_{k,\l}(\lambda))_k$ is i.i.d. so that for
  any $k$, $\EE[ Y_{k,\l}(\lambda)] = y_\l(\lambda)$ with
  \begin{align*}
    y_{\l}(\lambda) = & \EE\left[ \frac{m^\l}{(m-1)T} \abs{\psi(
        X^{m^{\ell}}_{T}) - \psi( X^{m^{\ell-1}}_{T})}^2 \Ec^+(W, \lambda) \right].
  \end{align*}
  We deduce from Proposition~\ref{prop:conv-moments-level} that the sequence $(y_\l)_\l$
  converges pointwise to the continuous function $\EE\left[ \left(\nabla \psi(X_T) \cdot
      U_T\right)^2 \Ec^+(W, \lambda) \right]$, thus satisfying
      Assumption~\ref{f_m-pointwise}.  The i.i.d. property of  the sequence
  $(Y_{k,\l}(\lambda))_k$ also implies that
  \begin{align}
    \label{eq:A1-ml}
    \EE\left[ \sup_{\abs{\lambda}\le K} \inv{N} \left(\sum_{k=1}^{N}
          Y_{k,\l}(\lambda)\right)^2 \right] \le
    \EE\left[ \inv{N} \sum_{k=1}^{N}
         \sup_{\abs{\lambda}\le K} Y_{k,\l}(\lambda)^2 \right] \le
       \frac{1}{N} \EE\left[ \sup_{\abs{\lambda}\le K}Y_{1,\l}(\lambda)^2 \right].
  \end{align}
  \begin{align}
    \label{eq:A1-sup-ml}
    \EE\left[ \sup_{\abs{\lambda}\le K}Y_{1,\l}(\lambda)^2 \right]^2      \le
    \EE\left[ \left(\frac{m^\l}{(m-1)T} \abs{\psi( X^{m^{\ell}}_{T}) - \psi(
          X^{m^{\ell-1}}_{T})}^2 \right)^4 \right] \EE\left[ \sup_{\abs{\lambda}
        \le K} \Ec^+(W, \lambda)^4 \right].
  \end{align}
  Using the following upper bound
  \begin{align}
    \label{eq:upper_exp}
    \sup_{|\lambda| \le K} e^{-\lambda \cdot W_{T}+\frac{1}{2}|\lambda|^{2}T} \le
    \expp{\frac{1}{2} K^{2}T} \prod_{l=1}^q (\expp{K W_T^{(l)}} + \expp{-K
    W_T^{(l)}}),
  \end{align}
   $\EE\left[ \sup_{\abs{\lambda} \le K}
    \Ec^+(W, \lambda)^4 \right] < +\infty$.   
  Let us have a closer look at the first term in~\eqref{eq:A1-sup-ml}. From Condition~\eqref{eq:Halpha},
  we can write
  \begin{align*}
    \EE\left[ \left(m^\l \abs{\psi( X^{m^{\ell}}_{T}) - \psi(
          X^{m^{\ell-1}}_{T})}^2\right)^4 \right] \le 
    C \EE\left[ m^{4\l} \abs{X^{m^{\ell}}_{T} - X^{m^{\ell-1}}_{T}}^{8
        \alpha} \left(1 + \abs{X^{m^{\ell}}_{T}}^{8 \beta} +
        \abs{X^{m^{\ell-1}}_{T}}^{8 \beta}\right) \right].
  \end{align*}
  By using the strong rate of convergence of the Euler scheme, we
  notice that for any $p>1$, 
  \begin{align*}
   \EE\left[ m^{4\l p} \abs{X^{m^{\ell}}_{T} - X^{m^{\ell-1}}_{T}}^{8
       \alpha p} \right] \le m^{4\l p} C \left(  m^{-4 \alpha p \ell} +
     m^{-4 \alpha p (\ell-1) }\right) \le C m^{4 \alpha p - 4 \l p (\alpha-1)}.
  \end{align*}
  Hence, since $\alpha \ge 1$, by using the Cauchy Schwartz inequality we easily
  check that
  \begin{align*}
    \sup_{\l} \EE\left[ \left(\frac{m^\l}{(m-1)T} \abs{\psi( X^{m^{\ell}}_{T}) - \psi(
          X^{m^{\ell-1}}_{T})}^2\right)^4 \right] < +\infty.
  \end{align*}
  By combining all these results into~\eqref{eq:A1-sup-ml}, we obtain that $\sup_{\l}  \EE\left[
    \sup_{\abs{\lambda} \le K} Y_{1,\l}^2(\lambda) \right] < +\infty$. Then, we deduce along with~\eqref{eq:A1-ml}
  that the sequence $(Y_{k,\l})_{k, \l}$ satisfies Assumption~\ref{slln2-u}
  of Proposition~\ref{prop:slln2-u}. \\

  Let $\delta>0$ and $\lambda \in \RR^d$.
  \begin{align*}
   & \EE\left[ \sup_{\abs{\mu - \lambda} \le \delta} \abs{Y_{1, \l}(\lambda)
        - Y_{1,\l}(\mu)}\right]^2 \le  \\
   &\EE\left[ \left(\frac{m^\l}{(m-1)T} \abs{\psi(
        X^{m^{\ell}}_{T}) - \psi( X^{m^{\ell-1}}_{T})}^2 
      \right)^2 \right]
  \EE\left[\sup_{\abs{\mu - \lambda} \le \delta} \abs{\Ec^+(W, \lambda) -
      \Ec^+(W, \mu)}^2  \right].
  \end{align*}
  We have just proved that the first expectation on the r.h.s is bounded
  uniformly in $\l$. Since the exponential weights are a.s. continuous with
  respect to $\lambda$, it is clear that \\
  ${\lim_{\delta \to 0} \sup_{\abs{\mu -
        \lambda} \le \delta} \abs{\Ec^+(W, \lambda) - \Ec^+(W, \mu)}^2 = 0}$ a.s.
  Moreover, we can apply Lebesgue's theorem with the upper--bound given
  by~\eqref{eq:upper_exp} to deduce that 
  \begin{align*}
    \lim_{\delta \to 0} \sup_\l \EE\left[ \sup_{\abs{\mu - \lambda} \le \delta} \abs{Y_{1, \l}(\lambda)
        - Y_{k,\l}(\mu)}\right] = 0.
  \end{align*}
  Thus, Assumption~\ref{slln2-sup-u} of Proposition~\ref{prop:slln2-u} is satisfied.
  Finally, we can apply Proposition~\ref{prop:slln2-u} to prove that the sequence
  $\inv{N_\l'} \sum_{k=1}^{N_\l'} Y_{k,\l}$ converges a.s locally uniformly to $0$.  The
  convergence of $\widehat{\lambda_\l}$ to $\la^\s$ can be deduced by closely mimicking
  the proof of Theorem~\ref{thm:theta_cv}.
\end{proof}

\subsection{Strong law of large numbers and central limit theorem}
\label{sec:ml-cv}

Let us  introduce a sequence $(a_\ell)_{\ell \in \NN}$ of positive real numbers
such that $\lim_{L\rightarrow \infty}\sum_{\ell=1}^{L}a_\ell=\infty$. We
assume that the  sample size $N_{\ell}$ has the following form
\begin{equation}
  \label{eq-size}
  N_{\ell, L}^\rho=\frac{\rho(L)}{m^\ell a_\ell}\sum_{k=1}^{L}a_k,
  \quad \ell\in\{0,\cdots,L\}
\end{equation}
for some increasing function $\rho : \NN \rightarrow \RR$.

We choose this form  for $N_{\ell}$ because it is a generic form allowing us a
straightforward  use of the Toeplitz Lemma, which is a key tool to prove
the central limit theorem. Since $\lim_{L\rightarrow
  \infty}\sum_{\ell=1}^{L}a_\ell=\infty$,
for any sequence $(x_{\ell})_{\ell\geq 1}$ converging to some limit $x\in\RR$,
$$\lim_{L\rightarrow +\infty}
\frac{\sum_{\ell=1}^{L} a_{\ell} x_{\ell}}{\sum_{\ell=1}^{L}a_\ell} =x.$$

We define the $\sigma$-algebra $\Gc$ generated by the samples $(W_{\l,k})_{\l, k
  \ge 1}$ used to compute $\widehat \lambda_L$. In the above framework, the
variables $(\tilde W_{\l,k})_{\l,k}$ are independent of $\Gc$.  We also
introduce the filtration $(\tilde\Gc_{\l})_{\l>0}$ generated by $(\tilde W_{\l,k}, k \ge
1)_{\l}$ and the filtration $(\Gc^\sharp_{\l})_{\l>0}$ defined  as
$\Gc^\sharp_{\l}=\Gc\vee\tilde\Gc_\l$.

\begin{theorem}\label{thm:slln-ml}
  Assume that $\sup_L \sup_\l \frac{L^2 a_\l}{\rho(L) \sum_{k=1}^L a_k} <
  +\infty$.  Then, under the assumptions of Theorem \ref{thm:cv_var},
  $Q_L(\widehat \la_{0},\dots,\widehat \la_{L}) \longrightarrow \EE[\psi(X_T)]$
  a.s. when $L \to +\infty$.
\end{theorem}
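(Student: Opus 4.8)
The plan is to prove the strong law of large numbers for the multilevel importance sampling estimator $Q_L(\widehat\la_0,\dots,\widehat\la_L)$ by decomposing it into a centered (zero-mean) part and a bias part, then controlling each separately.

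First I would write $Q_L - \EE[\psi(X_T)]$ as the sum of the centered empirical averages on each level plus the deterministic bias coming from the telescopic discretization. Specifically, the conditional expectation of each level term given $\Gc$ (using the independence of the $(\tilde W_{\l,k})$ from $\Gc$ and the importance-sampling identity \eqref{eq:is}) equals $\EE[\psi(X^{m^\l}_T)] - \EE[\psi(X^{m^{\l-1}}_T)]$ for $\l\ge 1$ and $\EE[\psi(X^{m^0}_T)]$ for $\l=0$, so these telescope to $\EE[\psi(X^{m^L}_T)]$. By Property~\ref{eq:P} and $\psi\in\Hc_\alpha$, we have $\EE[\psi(X^{m^L}_T)] \to \EE[\psi(X_T)]$, so the bias vanishes as $L\to\infty$.

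The core of the argument is then to show that the sum over levels of the centered averages tends to zero a.s. For each level $\l$, I would introduce the truncated centered variables analogous to the $Y_{i,n}$ of Theorem~\ref{thm:slln}, namely
\begin{align*}
  Z_{k,\l} = \left( \psi(\tX^{m^\l}_{T,\l,k}(\widehat\la_\l)) - \psi(\tX^{m^{\l-1}}_{T,\l,k}(\widehat\la_\l)) \right)\Ec^-(\tilde W_{\l,k},\widehat\la_\l) - e_\l,
\end{align*}
where $e_\l$ is the corresponding conditional mean, multiplied by an indicator $\ind{\widehat\la_\l \in \Vc}$ on a compact neighbourhood $\Vc$ of $\la^\s$. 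These form a doubly indexed array, and conditionally on $\Gc$ they are, within each level, i.i.d. with zero conditional mean; the key quantity is $\sum_{\l=1}^L N_{\l,L}^{-1}\frac{(m-1)T}{m^\l}\sigma_\l^2(\widehat\la_\l)$-type variance. The plan is to verify the hypotheses of Proposition~\ref{prop:slln2} for this array: the conditional variance of $Z_{1,\l}$ is bounded by $\sup_{\l}\sup_{\la\in\Vc} v_\l(\la)$, which by the uniform moment bounds already established in the proof of Theorem~\ref{thm:cv_var} (using $\alpha\ge 1$ and the strong rate of the Euler scheme) is finite. Since $\widehat\la_\l \to \la^\s \in \Vc$ a.s. by Theorem~\ref{thm:cv_var}, the indicator is eventually $1$, so the truncation is harmless.

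The main obstacle will be correctly accounting for the level-dependent weights $N_{\l,L}^{-1}\frac{(m-1)T}{m^\l}$ and the fact that the sample sizes $N_{\l,L}$ depend jointly on $\l$ and $L$, so that the relevant triangular array is genuinely doubly indexed with an $L$-dependent normalization. This is precisely where the hypothesis $\sup_L\sup_\l \frac{L^2 a_\l}{\rho(L)\sum_{k=1}^L a_k} < +\infty$ enters: combined with the form \eqref{eq-size} of $N_{\l,L}$, it controls $\sum_\l \frac{L^2}{N_{\l,L} m^\l}$ and lets me bound the aggregate variance so that Proposition~\ref{prop:slln2} (which is tailored to such doubly indexed sequences) applies. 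I expect the delicate point to be checking that the variance summability condition of that proposition holds uniformly, using the uniform-in-$\l$ boundedness of $\sigma_\l^2(\la)$ over $\la\in\Vc$; once this is in place, the a.s. convergence of the centered part to zero follows, and combining it with the vanishing bias yields $Q_L \to \EE[\psi(X_T)]$ a.s.
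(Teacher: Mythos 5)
Your scaffolding coincides with the paper's: reduce to the vanishing of $Q_L(\widehat\la_0,\dots,\widehat\la_L)-\EE[\psi(X_T^{m^L})]$, truncate each level by $\ind{\widehat\la_\l\in\Vc}$ for a compact neighbourhood $\Vc$ of $\la^\s$ (harmless since $\widehat\la_\l\to\la^\s$ a.s.\ by Theorem~\ref{thm:cv_var}), bound the level variances uniformly, and conclude via Proposition~\ref{prop:slln2}. The gap is in how you invoke Proposition~\ref{prop:slln2}. Your array $(Z_{k,\l})$ is indexed by (sample, level); for such an array the proposition only controls $\inv{n}\sum_{k=1}^n Z_{k,\rho(n)}$, i.e.\ the centered average of a \emph{single} level along a diagonal, and says nothing about the quantity you actually need, the sum over all levels $S_L=\sum_{\l=1}^L (N^\rho_{\l,L})^{-1}\sum_{k} Z_{k,\l}$. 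Your fallback through the ``aggregate variance'' does not close this. First, a bookkeeping error: by~\eqref{eq-size}, $L^2/(N^\rho_{\l,L}m^\l)=L^2 a_\l/(\rho(L)\sum_{k=1}^L a_k)$, so the hypothesis bounds the \emph{supremum} over $\l$ of these terms, not their sum, which equals $L^2/\rho(L)$ and is unbounded for the admissible choice $a_\l\equiv 1$, $\rho(L)=L$. Second, and more fundamentally, the correct aggregate bound $\Var(S_L)\le C/\rho(L)$ only yields convergence in probability and in $\LL^2$; the entire difficulty of the theorem is the almost sure statement, and Chebyshev plus Borel--Cantelli fails because $\sum_L 1/\rho(L)$ need not converge under the stated hypothesis. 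The blocking/subsequence argument inside Proposition~\ref{prop:slln2} is exactly what replaces summability, and it requires an empirical-mean structure over the first index.

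The missing idea is a re-indexing that turns $S_L$ into an empirical mean over the \emph{level} index. Following the paper, set for $\l,q\ge 1$
\begin{align*}
  Y_{\l,q} \defeq q\Bigg( \inv{N^\rho_{\l,q}} \sum_{k=1}^{N^\rho_{\l,q}}
    & \left( \psi(\tX^{m^{\l}}_{T,\l,k}(\widehat\la_{\l})) - \psi(\tX^{m^{\l-1}}_{T,\l,k}(\widehat\la_{\l})) \right)
    \Ec^-(\tilde W_{\l,k},\widehat\la_\l) \\
    & - \EE\left[ \psi(X^{m^{\l}}_{T}) - \psi(X^{m^{\l-1}}_{T}) \right] \Bigg)
  \ind{\widehat\la_\l \in \Vc},
\end{align*}
so that the first index is the level and the second is the parameter $q$ fixing all sample sizes through~\eqref{eq-size}; the truncated level sum is then exactly the diagonal $\overline Y_{L,L}=\inv{L}\sum_{\l=1}^L Y_{\l,L}$, which is the object Proposition~\ref{prop:slln2} is built for (with $\rho=\mathrm{id}$). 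Independence across levels and $\Gc$-measurability of the $\widehat\la_\l$ give $L\,\Var(\overline Y_{L,q})=\inv{L}\sum_{\l=1}^L\EE[\abs{Y_{\l,q}}^2]\le\sup_\l\EE[\abs{Y_{\l,q}}^2]$, and $\EE[\abs{Y_{\l,q}}^2]\le \frac{q^2 a_\l}{\rho(q)\sum_{k=1}^q a_k}\, m^\l\, \EE\bigl[\bigl(\psi(\tX^{m^{\l}}_{T,\l})-\psi(\tX^{m^{\l-1}}_{T,\l})\bigr)^2\,\Ec^+(\tilde W_{\l},\widehat\la_\l)\,\ind{\widehat\la_\l\in\Vc}\bigr]$, where the last factor is bounded in $\l$ by Proposition~\ref{prop:conv-moments-level} and the prefactor by your hypothesis --- whose $L^2$ factor exists precisely to absorb the rescaling by $q$ in the definition of $Y_{\l,q}$. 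This delivers both assumptions of Proposition~\ref{prop:slln2}, hence $\overline Y_{L,L}\to 0$ a.s. A minor additional point: level $0$ should be treated separately (as the paper does via Theorem~\ref{thm:slln} and Remark~\ref{rem:pasfixe}), since $\widehat\la_0$ converges to the minimizer of $v_0$, not to $\la^\s$.
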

For the choice $a_\l=1$ for all $\l$, the condition on $\rho$ reduces to $\sup_L
\frac{L}{\rho(L)} < +\infty$. 

\begin{proof}
  As $\EE[\psi(X^L_T)]$ converges to $\EE[\psi(X_T)]$ as $L$ goes to infinity,
  it is enough to show that $Q_L(\widehat \la_{0},\dots,\widehat \la_{L}) -
  \EE[\psi(X^L_T)]$ tends to $0$. 
  \begin{align}
    \label{eq:expanded-QL}
  Q_L(\widehat \la_0,\dots,\widehat \la_L)  - \EE[\psi(X^L_T)] & =
    \frac{1}{N^\rho_{0,L}}\sum_{k=1}^{N^\rho_{0,L}} 
    \psi( \tX^{m^0}_{T,0,k}(\widehat \la_0)) \Ec^-(\tilde W_{0,k}, \widehat \la_0) -
    \EE[\psi(X^{m^0}_{T, 0})] \nonumber\\
    & +\sum_{\ell=1}^{L}\frac{1}{N^\rho_{\ell,L}}
    \Bigg(\sum_{k=1}^{N^\rho_{\ell,L}}\left(\psi( \tX^{m^{\ell}}_{T,\ell,k}(\widehat \la_{\ell}))-\psi(
      \tX^{m^{\ell-1}}_{T,\ell,k}(\widehat \la_{\ell}))\right)
    \Ec^-(\Tilde W_{\ell,k}, \widehat \la_\ell) \nonumber\\
    & \phantom{+\sum_{\ell=1}^{L}\frac{1}{N_\ell} \Bigg(} 
    -  \EE\left[ \psi(\tX^{m^{\l}}_{T,\l}) -\psi( \tX^{m^{\l-1}}_{T,\l}) \right] \Bigg).
  \end{align}
  From Theorem~\ref{thm:slln} and Remark~\ref{rem:pasfixe}, we know that 
  \begin{align*}
    \frac{1}{N^\rho_{0,L}}\sum_{k=1}^{N^\rho_{0,L}} \psi( \tX^{m^0}_{T,0,k}(\widehat \la_0))
   \Ec^-(\tilde W_{0,k}, \widehat \la_0) - \EE[\psi(X^{m^0}_{T, 0}) ] \xrightarrow[L \to +\infty]{a.s.} 0.
  \end{align*}
  Then, it suffices to prove that the remaining terms in~\eqref{eq:expanded-QL}
  tend to $0$ with $L$. Let $\Vc$ be a compact neighbourhood of $\la^\s$.
  \begin{align*}
    & \sum_{\ell=1}^{L}\frac{1}{N^\rho_{\ell,L}}
    \Bigg(\sum_{k=1}^{N^\rho_{\ell,L}}\left(\psi( \tX^{m^{\ell}}_{T,\ell,k}(\widehat \la_{\ell}))-\psi(
      \tX^{m^{\ell-1}}_{T,\ell,k}(\widehat \la_{\ell}))\right)
    \Ec^-(\Tilde W_{\ell,k}, \widehat \la_\ell) -
    \EE\left[ \psi(\tX^{m^{\l}}_{T,\l}) -\psi( \tX^{m^{\l-1}}_{T,\l}) \right] \Bigg) =
    \\
    & \sum_{\ell=1}^{L}\frac{1}{N^\rho_{\ell,L}}
    \Bigg(\sum_{k=1}^{N^\rho_{\ell,L}}\left(\psi( \tX^{m^{\ell}}_{T,\ell,k}(\widehat \la_{\ell}))-\psi(
      \tX^{m^{\ell-1}}_{T,\ell,k}(\widehat \la_{\ell}))\right)
    \Ec^-(\Tilde W_{\ell,k}, \widehat \la_\ell) -
    \EE\left[ \psi(\tX^{m^{\l}}_{T,\l}) -\psi( \tX^{m^{\l-1}}_{T,\l}) \right] \Bigg)
    \ind{\widehat \lambda_\l \in \Vc} \\
    & + \sum_{\ell=1}^{L}\frac{1}{N^\rho_{\ell,L}}
    \Bigg(\sum_{k=1}^{N^\rho_{\ell,L}}\left(\psi( \tX^{m^{\ell}}_{T,\ell,k}(\widehat \la_{\ell}))-\psi(
      \tX^{m^{\ell-1}}_{T,\ell,k}(\widehat \la_{\ell}))\right)
    \Ec^-(\Tilde W_{\ell,k}, \widehat \la_\ell) -
    \EE\left[ \psi(\tX^{m^{\l}}_{T,\l} -\psi( \tX^{m^{\l-1}}_{T,\l}) \right] \Bigg)
    \ind{\widehat \lambda_\l \notin \Vc}
  \end{align*}
  For $\l$ large enough (although random),  $\ind{\widehat \lambda_\l \notin \Vc} = 0$.
  Hence, the second term in the above equation tends to $0$ a.s. when $L$ goes to infinity. It
  remains to prove that the first term also converges to zero.  To do so, we apply
  Proposition~\ref{prop:slln2} to the sequence 
  \begin{align*}
    Y_{\l,q} = & q \frac{1}{N_{\l, q}^{\rho}} \Bigg(\sum_{k=1}^{N_{\l,q}^\rho}\left(\psi(
      \tX^{m^{\l}}_{T,\l,k}(\widehat \la_{\l}))-\psi( \tX^{m^{\l-1}}_{T,\ell,k}(\widehat
      \la_{\l}))\right) \Ec^-(\Tilde W_{\l,k}, \widehat \la_\l) \\
    & \qquad - \EE\left[ 
      \left(\psi(\tX^{m^{\l}}_{T,\l})-\psi(\tX^{m^{\l-1}}_{T,\l})\right) \right]\Bigg)
    \ind{\widehat \lambda_\l \in \Vc} 
  \end{align*}
  and set $\overline Y_{L,q} = \inv{L} \sum_{\l=1}^L Y_{\l,q}$. Note that $\EE[Y_{\l, q}]
  = 0$ for all $\l$ and $q$.  Since the samples used in the different levels are
  independent and the $\hat \lambda_\l$'s are independent of the filtration $\tilde \Gc$,
  we can write
  \begin{align}
    \label{eq:lfgn1}
    \EE\left[\abs{\overline Y_{L,q}}^2\right] & = \inv{L^2} \EE\left[ \EE\left[
        \abs{\sum_{\l=1}^L Y_{\l,q}}^2 \Big| \Gc \right] \right]  
    = \inv{L^2} \sum_{\l=1}^L \EE\left[ \abs{Y_{\l,q}}^2 \right] .
  \end{align}
  Using the same kind of arguments, we obtain
  \begin{align*}
    \EE\left[ \abs{Y_{\l,q}}^2 \right] & \le  q^2 \frac{1}{N_{\l, q}^{\rho}}
    \EE\left[\left(\psi( \tX^{m^{\l}}_{T,\l})-\psi(
        \tX^{m^{\l-1}}_{T,\ell})\right)^2 
      \Ec^+(\Tilde W_{\l},\widehat  \la_\l)\ind{\widehat \lambda_\l \in \Vc}\right] \\
    &\le \frac{q^2 a_\l}{\rho(q) \sum_{k=1}^q a_k} \left\{m^\l
      \EE\left[\left(\psi( \tX^{m^{\l}}_{T,\l})-\psi(
          \tX^{m^{\l-1}}_{T,\ell})\right)^2 
        \Ec^+(\Tilde W_{\l},\widehat  \la_\l)\ind{\widehat \lambda_\l \in \Vc}\right]
    \right\}.
  \end{align*}
  From Proposition~\ref{prop:conv-moments-level},  the term into braces
  converges when $\l$ goes to infinity. Hence, using the assumptions on the
  function $\rho$, we get
  \begin{align}
    \label{eq:lfgn2}
    \sup_q \sup_\l \EE\left[ \abs{Y_{\l,q}}^2 \right] < +\infty.
  \end{align}
  By combining Equations~\eqref{eq:lfgn1} and~\eqref{eq:lfgn2}, we get that
  $\sup_L \sup_q L \EE\left[\abs{\overline Y_{L,q}}^2\right] < +\infty$. Hence,
  Proposition~\ref{prop:slln2} yields that $\overline Y_{L,L}$ vanishes when $L$
  goes to infinity and this ends the proof.
\end{proof}

\begin{theorem}
  \label{thm:clt-ml} Suppose that the assumptions of Theorem \ref{thm:cv_var}
  hold and that Condition~\eqref{eq:Hen} is satisfied. If
  $N_{\l, L}^\rho$ is given by~\eqref{eq-size} with 
  $\rho(L) = m^{2 \gamma L}(m-1) T$ and the sequence $(a_\l)_\l$ satisfies
  \begin{equation}
    \label{eq:rate_al}
    \lim_{L\rightarrow \infty}
    \frac{1}{\left(\sum_{\ell=1}^{L}a_\ell\right)^{p/2}}
    \sum_{\ell=1}^{L}a_\ell^{p/2} =0, \mbox{ for } p>2,
  \end{equation}
  then $m^{\gamma L} ( Q_L(\widehat \la_{0},\dots,\widehat \la_{L}) - \EE[\psi(X_T)])
  \Longrightarrow \Nc(C_{\psi}(T,\gamma),v(\la^\s))$  when $L \to \infty.$
\end{theorem}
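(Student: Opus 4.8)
The plan is to isolate the deterministic discretization bias from a centered statistical fluctuation and to treat the latter by a conditional central limit theorem. Write $m^{\gamma L}(Q_L(\widehat\la_0,\dots,\widehat\la_L) - \EE[\psi(X_T)]) = A_L + B_L$ where $A_L \defeq m^{\gamma L}(\EE[\psi(X^{m^L}_T)] - \EE[\psi(X_T)])$ and $B_L \defeq m^{\gamma L}(Q_L(\widehat\la_0,\dots,\widehat\la_L) - \EE[\psi(X^{m^L}_T)])$. Because each $\widehat\la_\l$ is $\Gc$-measurable while the paths $\tilde W_{\l,k}$ are independent of $\Gc$, the importance sampling identity~\eqref{eq:is} gives $\EE[Q_L(\widehat\la_0,\dots,\widehat\la_L)\mid\Gc] = \EE[\psi(X^{m^L}_T)]$ by telescoping, so $B_L$ is exactly centered given $\Gc$ and $A_L$ is deterministic. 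By Condition~\eqref{eq:Hen} applied with $n=m^L$, $A_L \to C_\psi(T,\gamma)$, which supplies the mean of the limiting Gaussian; by Slutsky it then suffices to prove $B_L \Longrightarrow \Nc(0, v(\la^\s))$.

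For $B_L$ I would condition on $\Gc$. Setting, for $\l\ge1$, $\zeta_{\l,k} \defeq (\psi(\tX^{m^{\l}}_{T,\l,k}(\widehat\la_\l)) - \psi(\tX^{m^{\l-1}}_{T,\l,k}(\widehat\la_\l)))\Ec^-(\tilde W_{\l,k},\widehat\la_\l) - \EE[\psi(X^{m^\l}_T) - \psi(X^{m^{\l-1}}_T)]$ (with the analogous level-$0$ term), and $D_{\l,L} \defeq (m^{\gamma L}/N^\rho_{\l,L})\sum_{k=1}^{N^\rho_{\l,L}}\zeta_{\l,k}$, we have $B_L = \sum_{\l=0}^L D_{\l,L}$. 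Conditionally on $\Gc$ the blocks are independent across $\l$ and, within a level, i.i.d. and centered, so $\{D_{\l,L}\}_{0\le\l\le L}$ is a triangular array of conditionally independent, conditionally centered variables (equivalently $\sum_{\l'\le\cdot}D_{\l',L}$ is a martingale for $(\Gc^\sharp_\l)_\l$). Using the conditional variance $\Var(\zeta_{\l,1}\mid\Gc) = \frac{(m-1)T}{m^\l}\sigma_\l^2(\widehat\la_\l)$ together with $N^\rho_{\l,L}$ from~\eqref{eq-size} with $\rho(L)=m^{2\gamma L}(m-1)T$, the factors $m^{\gamma L}$ and $m^\l$ cancel and one finds $\sum_{\l=0}^L\EE[D_{\l,L}^2\mid\Gc] = \frac{a_0\sigma_0^2(\widehat\la_0)}{(m-1)T\sum_{k}a_k} + \sum_{\l=1}^L \frac{a_\l}{\sum_{k}a_k}\sigma_\l^2(\widehat\la_\l)$. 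The level-$0$ term vanishes as $\sum_k a_k\to\infty$, while Theorem~\ref{thm:cv_var} and Proposition~\ref{prop:conv-moments-level} give $\sigma_\l^2(\widehat\la_\l)\to v(\la^\s)$ a.s.; since the $a_\l/\sum_k a_k$ are Toeplitz weights, the Toeplitz lemma yields $\sum_{\l=0}^L\EE[D_{\l,L}^2\mid\Gc]\to v(\la^\s)$ a.s.

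It then remains to verify a conditional Lyapunov condition, and this is the main obstacle, since the summands carry the random, $\Gc$-measurable parameters $\widehat\la_\l$ through a doubly indexed array. On the event $\{\widehat\la_\l\in\Vc\}$ for a fixed compact $\Vc\ni\la^\s$ (which eventually holds a.s. because $\widehat\la_\l\to\la^\s$), a Marcinkiewicz--Zygmund/Rosenthal bound gives, for $p>2$, $\EE[|D_{\l,L}|^p\mid\Gc]\le C\,m^{\gamma Lp}(N^\rho_{\l,L})^{-p/2}\EE[|\zeta_{\l,1}|^p\mid\Gc]$. The moment estimates from the proof of Theorem~\ref{thm:cv_var} (using $\psi\in\Hc_\alpha$ with $\alpha\ge1$, the strong rate of the Euler scheme, and the uniform bound~\eqref{eq:upper_exp} on the weights over $\Vc$) yield $\sup_\l m^{\l p/2}\,\EE[|\zeta_{\l,1}|^p\ind{\widehat\la_\l\in\Vc}\mid\Gc]<\infty$. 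Substituting $N^\rho_{\l,L}$ makes all powers of $m^{\gamma L}$ and $m^\l$ cancel exactly, leaving $\sum_{\l=0}^L\EE[|D_{\l,L}|^p\mid\Gc]\le C\sum_{\l=0}^L a_\l^{p/2}/(\sum_k a_k)^{p/2}$, which tends to $0$ precisely by assumption~\eqref{eq:rate_al}; this is exactly where the form~\eqref{eq-size} of the sample sizes and the rate condition are used. The conditional Lindeberg--Feller (or martingale) central limit theorem then gives $\EE[\expp{\mathrm{i}uB_L}\mid\Gc]\to\expp{-u^2 v(\la^\s)/2}$ a.s.; since the limit is deterministic, dominated convergence upgrades this to $\EE[\expp{\mathrm{i}uB_L}]\to\expp{-u^2 v(\la^\s)/2}$, i.e. $B_L\Longrightarrow\Nc(0,v(\la^\s))$, and combining with $A_L\to C_\psi(T,\gamma)$ via Slutsky completes the proof. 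The delicate points throughout are the uniform-in-$\l$ moment control of the array and the passage from the conditional to the unconditional limit, handled as in the technical results of Section~\ref{sec:slln}.
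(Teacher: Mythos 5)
Your proof is correct and follows essentially the same route as the paper: the same bias/fluctuation split, the same martingale-array structure with respect to $(\Gc^\sharp_\l)_\l$ (your conditioning on $\Gc$ plus the classical Lindeberg--Feller theorem is just the unrolled form of the paper's martingale-array CLT, Theorem~\ref{thm:lindeberg}), the same bracket computation reducing to the Toeplitz weights $a_\l/\sum_k a_k$ multiplied by $\sigma_\l^2(\widehat\la_\l)$, and the same Lyapunov bound driven exactly by~\eqref{eq:rate_al}. The only cosmetic differences are that the paper treats level $0$ separately (via Theorem~\ref{thm:clt} and Remark~\ref{rem:pasfixe}) instead of inside the array, and uses Burkholder's inequality with Jensen where you invoke Marcinkiewicz--Zygmund/Rosenthal.
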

The convergence rate does not depend on the number of samples $N_\l'$ provided that they
tend to infinity with $\l$.
\begin{proof}
  By assumption~\eqref{eq:Hen}, we have that $\lim_{L\rightarrow +\infty}m^{\gamma
  L}(\EE[\psi(X^{m^{L}}_T)-\psi(X_T)]=C_{\psi}(T,\gamma).$ The convergence of the level
    $0$ is governed by Theorem~\ref{thm:clt} (see Remark~\ref{rem:pasfixe}) which yields
    that, when $L \to \infty$, 
  $$
  \left( \frac{1}{\sqrt{N^\rho_{0, L}}}\sum_{k=1}^{N^\rho_{0, L}} \psi(\tilde
    X^{m^0}_{T,0,k}(\widehat \la_{0})) \Ec^-(\tilde W_{0,k}, \widehat \la_{0})
    -\EE[\psi(X^{m^0}_T)] \right) \Longrightarrow
    \Nc(0,\sigma_0^2(\hat \la_0)).
  $$
  Then, we deduce from the choice of the
  function $\rho$ that
  $$
  m^{\gamma L}\left( \frac{1}{N^\rho_{0, L}}\sum_{k=1}^{N^\rho_{0, L}} \psi(\tilde
    X^{m^0}_{T,0,k}(\widehat \la_{0})) \Ec^-(\tilde W_{0,k}, \widehat \la_{0})
    -\EE[\psi(X^{m^0}_T)] \right) \xrightarrow[L \to +\infty]{\quad \PP\quad }
  0.
  $$
  Since all the blocks are independent, it is sufficient to prove that 
  $$ 
  m^{\gamma L}\left(\sum_{\ell=1}^{L}\frac{1}{N^\rho_{\ell, L}}
    \sum_{k=1}^{N^\rho_{\ell,L}}\left(\psi(\tilde X^{m^{\ell}}_{T,\ell,k}(\widehat \la_{\ell}))-\psi(
      \tilde X^{m^{\ell-1}}_{T,\ell,k}(\widehat \la_{\ell}))\right)
    \Ec^-(\tilde W_{\ell,k}, \widehat \la_{\ell}) - \EE[\psi(X^n_T)]\right) \Longrightarrow \Nc(0, v(\la^\s)).
  $$
  To do so, we introduce the $(\Gc^\sharp_{l})_{l\geq 1}$-martingale
  array $(Y_l^{n})_{l\geq 1 }$ defined by
  $$ 
  Y_l^{n}\defeq \sum_{\ell=1}^{l}
  \frac{m^{\gamma L}}{N^\rho_{\ell,L}}\sum_{i=1}^{N^\rho_{\ell,L}}\left[\left(\psi(\tilde
      X^{m^{\ell}}_{T,\ell,i}(\widehat \la_{\ell}))-\psi( \tilde
      X^{m^{\ell-1}}_{T,\ell,i}(\widehat \la_{\ell}))\right)
    \Ec^-(\tilde W_{\ell,i}, \widehat \la_{\ell}) 
    - \EE\left[\psi( \tilde X^{m^{\ell}}_{T}) - 
      \psi( \tilde X^{m^{\ell-1}}_{T})\right]\right],
  $$
  so $\EE[Y_l^n] = 0$ for all $l,n$. According to Theorem
  \ref{thm:lindeberg}, we need to study the asymptotic behaviors of the two
  quantities
  $$
  \langle Y^{n}\rangle_{L}=\sum_{\l=1}^{L} \mathbb{E} \left[|
    Y_{\l}^{n}-Y_{\l-1}^{n}|^{2} \Big|\Gc^\sharp_{\l-1} \right]
  \mbox{ and }
  \sum_{\l=1}^{L} \EE\left[| Y_{\l}^{n}-Y_{\l-1}^{n}|^{p}
    \big|\Gc^\sharp_{\l-1} \right], \;\mbox{ for }p>2 \mbox{ as } n\rightarrow \infty.
  $$
  Note that $\widehat \la_{\l}$ is $\Gc^\sharp_{\l-1}$--measurable and for any $\la\in \RR^q$ the
  variables $(\tilde X^{m^{\l}}_{T,\l,i}(\la),\tilde X^{m^{\l-1}}_{T,\l,i}(\la))_{1\leq
    i\leq N_l}$ are independent of $\Gc^\sharp_{\l-1}$, then using
  (\ref{eq-size}) with $\rho(L) =
  m^{2 \gamma L}(m-1) T$, we rewrite the first quantity as follows 
  \begin{align*}
   \langle Y^{n}\rangle_L & =
    \frac{1}{\sum_{\l=1}^{L}a_\l}
    \sum_{\l=1}^{L}a_\l \left[v_\l(\widehat \lambda_\l)- \Xi_\ell^2\right]
  \end{align*}
  with $v_\l$ defined by~\eqref{eq:vl} and $\Xi_\l$ defined by~\eqref{eq:xi_l}. Let $\Vc$
  be a compact neighbourhood of $\la^\s$. We can write
  \begin{align}
    \label{eq:bracket}
   \langle Y^{n}\rangle_L & =
    \frac{1}{\sum_{\l=1}^{L}a_\l}
    \sum_{\l=1}^{L}a_\l \left[v_\l(\widehat \lambda_\l)- \Xi_\ell^2\right] \ind{\widehat
      \lambda_\l \in \Vc}
    + \frac{1}{\sum_{\l=1}^{L}a_\l}
    \sum_{\l=1}^{L}a_\l \left[v_\l(\widehat \lambda_\l)- \Xi_\ell^2\right] \ind{\widehat
      \lambda_\l \notin \Vc}.
  \end{align}

  From Proposition~\ref{prop:conv-moments-level}, we know that $\Xi_\ell
  \longrightarrow\EE[\nabla \psi(X_T).U_T]=0$, where the last equality is a straightforward
  consequence of \cite[Proposition 2.1]{Keb}. From
  Proposition~\ref{prop:conv-moments-level}, we know that the sequence of fucntions $v_\l$
  converges pointwise to $v$ defined by~\eqref{eq:variance-ml}.  Moreover, we can easily
  prove that this convergence is locally uniform. Hence, by the convergence of $\widehat
  \lambda_\l$ to $\la^\s$ (see Theorem~\ref{thm:cv_var}), we deduce that $v_\l(\widehat
  \lambda_\l) \ind{\widehat \lambda_\l \in \Vc}$ converges to $v(\la^\s)$ when $\l \to
  +\infty$. Moreover, for $\l$ large enough (although random), $\ind{\widehat \lambda_\l
  \notin \Vc} = 0$. 
  
  Thus, we deduce from the Toeplitz lemma that $ \langle Y^{n}\rangle_L
  \longrightarrow v(\la^\s)$ a.s.  Using Burkholder's inequality and
  Jensen's inequalty together with the assumptions on $\psi$ and Property
  \ref{eq:P}, we obtain that for any $p>2$, there exists $C_p>0$ such that 
  $$ 
  \sum_{\l=1}^{L} \EE\left[| Y_{\l}^{n}-Y_{\l-1}^{n}|^{p}
    \big|\Gc^\sharp_{\l-1} \right]\leq \frac{C_p}{\left(\sum_{\ell=1}^{L}a_\ell\right)^{p/2}}
  \sum_{\ell=1}^{L}a_\ell^{p/2}\xrightarrow[L\rightarrow\infty]{} 0 
  $$
  where the convergence to zero is ensured by~\eqref{eq:rate_al}.
  Consequently, we can apply Theorem \ref{thm:lindeberg} to achieve the proof.
\end{proof}
\begin{remark}\label{rem:ci}
  As usual, one can rescale $ m^{\gamma L} ( Q_L(\widehat \la_{0},\dots,\widehat
  \la_{L}) - \EE[\psi(X_T)])$ by an estimator of $v(\la^\s)$ to obtain a central limit
  theorem with variance $1$. Thanks to Theorem~\ref{thm:cv_var}, we know that $v_{\l,
    N_\l}(\hat \lambda_\l)$ is a convergent estimator of $v(\la^\s)$ and we can easily
  deduce from the proof of Theorem~\ref{thm:clt-ml} that under its assumptions
  \begin{align*}
    m^{2\gamma L} \left\{ 
      \vphantom{
      \frac{1}{N^{\rho}_{0,L}} \left(\frac{1}{N^{\rho}_{0,L}}\sum_{k=1}^{N^{\rho}_{0,L}}
        (\psi(X^{m^0}_{T}) \Ec^+(W_{0,k}, \lambda_0))^2 - \left(
          \frac{1}{N^{\rho}_{0,L}}\sum_{k=1}^{N^{\rho}_{0,L}} \psi(X^{m^0}_{T})
          \Ec^+(W_{0,k}, \lambda_0) \right)^2 \right)
    } \right.
      &
      \frac{1}{N^{\rho}_{0,L}} \left(\frac{1}{N^{\rho}_{0,L}}\sum_{k=1}^{N^{\rho}_{0,L}}
        (\psi(\tilde X^{m^0}_{T}) \Ec^+(\tilde W_{0,k}, \lambda_0))^2 - \left(
          \frac{1}{N^{\rho}_{0,L}}\sum_{k=1}^{N^{\rho}_{0,L}} \psi(\tilde X^{m^0}_{T})
          \Ec^+(\tilde W_{0,k}, \lambda_0) \right)^2 \right) \\
      & \left.
      \vphantom{
      \frac{1}{N^{\rho}_{0,L}} \left(\frac{1}{N^{\rho}_{0,L}}\sum_{k=1}^{N^{\rho}_{0,L}}
        (\psi(X^{m^0}_{T}) \Ec^+(W_{0,k}, \lambda_0))^2 - \left(
          \frac{1}{N^{\rho}_{0,L}}\sum_{k=1}^{N^{\rho}_{0,L}} \psi(X^{m^0}_{T})
          \Ec^+(W_{0,k}, \lambda_0) \right)^2 \right)
      }
      +\sum_{\ell=1}^L N^{-1}_{\ell}  \frac{(m-1)T}{m^\l}\left(
        \tilde v_{\ell, N_\l}(\lambda_{\ell}) -  \tilde \Xi_{\l, N_\l}^2  \right) \right\}
    \xrightarrow[L \to +\infty]{} v(\la^\s).
  \end{align*}
  Note that the quantities $\tilde v_{\l, N_\l}$ and $\tilde \Xi_{\, N_\l}$ are defined as in
  \eqref{eq:vl} and~\eqref{eq:xi_l} but using the tilde sample paths
  $(\tilde X_{\l, k})$ and $(\tilde W_{\l, k})$.  The term into braces, which can be
  computed online during the multilevel Monte Carlo procedure, can be used to build
  confidence intervals. Any convergent estimator of $v(\la^\s)$ could of course be
  used, but this one has the advantage to correspond to the true variance of the
  multilevel Monte Carlo estimator for any finite number of levels $L$ and not only
  asymptotically.
\end{remark}

\section{Strong law of large numbers for doubly indexed sequences}\label{sec:slln}

In this section, we prove two corner stone results used in the convergence of
the multilevel approach. We tackle the convergence of empirical averages of
doubly indexed random sequences when both indices tend to infinity together.

\begin{proposition}
  \label{prop:slln2} Let $(X_{n,m})_{n, m}$ be a doubly indexed sequence of
  vector valued random variables such that for all $n$, $\EE[X_{n,m}] = x_m$
  with $\lim_{m \to +\infty} x_m = x$ . We
  define $\overline X_{n,m} = \inv{n} \sum_{i=1}^n X_{i,m}$. Assume that the two
  following assumptions are satisfied
  \begin{hypo}
    \label{slln2}
    \begin{subhypo}
    \item \label{varbar} $\sup_n  \sup_m  n \Var\left( \overline
      X_{n,m} \right) < +\infty$.
    \item \label{var} $\sup_n \sup_m \Var\left( X_{n,m}
      \right) < +\infty$.
    \end{subhypo}
  \end{hypo}
  Then, for all increasing functions $\rho: \NN \to \NN$, $\overline
  X_{n, \rho(n)} \longrightarrow x$ a.s. and in $\LL^2$ when $n \to \infty$.
\end{proposition}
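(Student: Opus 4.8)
The plan is to establish the $\LL^2$ convergence first, which is immediate, and then to upgrade it to almost sure convergence by the classical subsequence-and-gaps technique. For the $\LL^2$ statement I would write
\[
\overline X_{n,\rho(n)} - x = \left(\overline X_{n,\rho(n)} - x_{\rho(n)}\right) + \left(x_{\rho(n)} - x\right),
\]
and treat the two pieces separately. Since $\EE[\overline X_{n,m}] = x_m$ for every $n$, the first bracket is centred and, setting $C \defeq \sup_n\sup_m n\Var(\overline X_{n,m})$, Assumption~\ref{varbar} gives $\EE\bigl[\abs{\overline X_{n,\rho(n)} - x_{\rho(n)}}^2\bigr] = \Var(\overline X_{n,\rho(n)}) \le C/n \to 0$. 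As $\rho$ is increasing we have $\rho(n)\to\infty$, so the deterministic second bracket tends to $0$ because $x_m \to x$. This yields the $\LL^2$ convergence.

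For the almost sure convergence I would extract the subsequence $n_k = k^2$. Chebyshev's inequality together with Assumption~\ref{varbar} gives, for every $\varepsilon>0$,
\[
\sum_{k\ge 1}\PP\bigl(\abs{\overline X_{n_k,\rho(n_k)} - x_{\rho(n_k)}} > \varepsilon\bigr) \le \sum_{k\ge 1}\frac{\Var(\overline X_{n_k,\rho(n_k)})}{\varepsilon^2} \le \sum_{k\ge 1}\frac{C}{k^2\varepsilon^2} < \infty,
\]
so Borel--Cantelli yields $\overline X_{n_k,\rho(n_k)} - x_{\rho(n_k)} \to 0$ a.s., and hence $\overline X_{n_k,\rho(n_k)} \to x$ a.s.\ since $x_{\rho(n_k)} \to x$. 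It then remains to fill the gaps, i.e.\ to show that $\max_{n_k \le n < n_{k+1}} \abs{\overline X_{n,\rho(n)} - x_{\rho(n)}}$ vanishes a.s. Writing $Z_{i,m} \defeq X_{i,m} - x_m$ and using $1/n \le 1/n_k$ on the gap, I would decompose
\[
\overline X_{n,\rho(n)} - x_{\rho(n)} = \frac{n_k}{n}\bigl(\overline X_{n_k,\rho(n)} - x_{\rho(n)}\bigr) + \inv{n}\sum_{i=n_k+1}^{n} Z_{i,\rho(n)}.
\]
Here Assumption~\ref{var} enters: the uniform bound on $\Var(X_{n,m})$, combined with the fact that for each fixed column the centred increments $(Z_{i,m})_i$ are (conditionally) uncorrelated in the applications, lets me invoke a Kolmogorov-type maximal inequality to bound the partial-sum maximum of the increments by a quantity of order $n_{k+1}/n_k^2 = O(k^{-2})$ for a single column.

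The main obstacle is precisely this last step: because the column index $\rho(n)$ itself moves as $n$ runs through a gap, one cannot simply fix a column and must control the maximal fluctuation simultaneously over all columns visited in $[n_k,n_{k+1})$, as well as the oscillation of $\overline X_{n_k,m}$ across those columns. A naive union bound over the $O(k)$ columns touched in a gap degrades the summable $O(k^{-2})$ estimate into a non-summable $O(k^{-1})$ one, so the delicate point is to exploit the joint structure of the array --- the uniform variance control of Assumption~\ref{var} and the within-column martingale structure --- to bound this doubly-indexed maximum without losing the extra factor. Once the gap maximum is shown to be summable in probability along $(n_k)_k$, a final Borel--Cantelli step combines with the subsequence convergence to conclude that $\overline X_{n,\rho(n)} \to x$ a.s.
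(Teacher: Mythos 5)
Your $\LL^2$ step and the Borel--Cantelli argument along $n_k=k^2$ are correct and structurally match the paper, which works with the blocks $[n^2,(n+1)^2)$. But your proposal stops exactly where the proof has to start: the control of the maximum over a block is the whole content of the almost sure statement, and you leave it open. Worse, the tools you propose for it are not available here: the proposition assumes nothing beyond the two variance bounds \ref{varbar} and \ref{var} --- no independence, no martingale or uncorrelatedness structure within a column --- so a ``Kolmogorov-type maximal inequality'' has nothing to act on, and invoking structure that holds ``in the applications'' proves a different statement. The idea you are missing is that no maximal inequality is needed at all. Setting $Y_{i,m}=X_{i,m}-x_m$ and $Z_{n,m}=\sup\{\abs{\overline Y_{k,m}}\,:\,n^2\le k<(n+1)^2\}$, the paper uses the crude pointwise domination
\begin{equation*}
  Z_{n,m}\;\le\;\abs{\overline Y_{n^2,m}}+\frac{1}{n^2}\sum_{i=n^2+1}^{(n+1)^2}\abs{Y_{i,m}},
\end{equation*}
expands the square, and bounds every term by Cauchy--Schwarz, using \ref{varbar} for $\EE[\abs{\overline Y_{n^2,m}}^2]$ and \ref{var} for the products $\EE[\abs{Y_{i,m}}\abs{Y_{j,m}}]$. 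This gives $\EE[Z_{n,m}^2]\le C n^{-2}$ with $C$ independent of $m$, hence $\PP(Z_{n,\rho(n)}\ge n^{-1/4})\le C n^{-3/2}$ is summable and Borel--Cantelli concludes. A brute-force $\LL^2$ bound on the block supremum is already summable; no fluctuation inequality and no distributional assumptions enter.

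That said, the obstacle you isolate --- that the column index $\rho(k)$ moves while $k$ crosses a block --- is genuine, and you should know that the paper's written proof does not resolve it either: applying the bound with $m=\rho(n)$ only controls $\sup_{n^2\le k<(n+1)^2}\abs{\overline Y_{k,\rho(n)}}$, the column being frozen at the start of the block, whereas the statement requires $\abs{\overline Y_{k,\rho(k)}}$; and, as you correctly observe, a union bound over the at most $2n+1$ columns visited in a block replaces the summable $n^{-3/2}$ by a non-summable $n^{-1/2}$. In fact, under hypotheses \ref{slln2} alone the conclusion can fail: take the columns mutually independent, $(X_{i,m})_i$ i.i.d.\ with $\PP(X_{i,m}=\pm m)=\frac{1}{2m^2}$ and $\PP(X_{i,m}=0)=1-\frac{1}{m^2}$, so that $x_m=0$ and both assumptions hold with constant $1$, and take $\rho(n)=n$; then $\PP(\abs{\overline X_{n,n}}\ge 1)\ge\frac{1}{n}(1-\frac{1}{n^2})^{n-1}$ is not summable, these events are independent across $n$, and the second Borel--Cantelli lemma yields $\abs{\overline X_{n,n}}\ge 1$ infinitely often a.s. So your instinct that one must exploit the joint structure of the array is accurate: in the paper's applications all columns are built from the same i.i.d.\ driving samples, and it is this coupling across columns, not the two variance bounds by themselves, that makes the conclusion true there. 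Neither your sketch nor the paper's proof closes the passage from frozen to moving columns at the stated level of generality.
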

From this proposition, one can easily deduce the following corollary by
extracting a bespoke subsequence
\begin{corollary}\label{cor:slln}
  Assume that $(X_{n,m})_{n, m}$ be a doubly indexed sequence of vector valued
  random variables satisfying the assumptions of Proposition~\ref{prop:slln2}.
  Then, for any strictly increasing function $\xi: \NN \to \NN$, $\overline
  X_{\xi(n),n} \longrightarrow x$ a.s. and in $\LL^2$ when $n \to \infty$.
\end{corollary}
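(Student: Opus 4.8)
The plan is to deduce the corollary from Proposition~\ref{prop:slln2} by realising the sequence $(\overline X_{\xi(n),n})_n$ as a subsequence of a sequence of the form $(\overline X_{k,\rho(k)})_k$ to which the proposition applies. The two statements share the very same hypotheses \ref{varbar} and \ref{var}, which are properties of the array $(X_{n,m})_{n,m}$ alone and do not refer to any particular coupling of the two indices; hence they are available at once. What has to be arranged is a second-index function $\rho$ with $\rho(\xi(n)) = n$ for every $n$, so that evaluating $\overline X_{k,\rho(k)}$ along the strictly increasing index sequence $k = \xi(n)$ returns exactly $\overline X_{\xi(n),n}$.

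First I would build $\rho$ as the left inverse of $\xi$. Setting
\[
  \rho(k) \defeq \max\{ j \ge 1 : \xi(j) \le k \}
\]
for $k \ge \xi(1)$ (and $\rho(k) \defeq 1$ otherwise), one checks immediately that $\rho : \NN \to \NN$ is non-decreasing and that $\rho(k) \to +\infty$ as $k \to +\infty$, since $\rho(k) \ge N$ as soon as $k \ge \xi(N)$. The strict monotonicity of $\xi$ then gives, for every $n$, the equivalence $\xi(j) \le \xi(n) \iff j \le n$, whence $\rho(\xi(n)) = n$. This is precisely the identity needed to align the indices.

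It then remains to apply Proposition~\ref{prop:slln2} to obtain $\overline X_{k,\rho(k)} \to x$ almost surely and in $\LL^2$, and to restrict to the strictly increasing indices $k = \xi(n)$, which yields $\overline X_{\xi(n),\rho(\xi(n))} = \overline X_{\xi(n),n} \to x$ both a.s.\ and in $\LL^2$, since any subsequence of an a.s.\ (resp.\ $\LL^2$) convergent sequence converges to the same limit. The only delicate point I expect is that the $\rho$ produced above is merely non-decreasing rather than strictly increasing, whereas an integer-valued strictly increasing $\rho$ with $\rho(\xi(n)) = n$ cannot exist unless $\xi(n+1) = \xi(n) + 1$ for all $n$. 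This is harmless: the proof of Proposition~\ref{prop:slln2} invokes the monotonicity of $\rho$ only through $\rho(k) \to +\infty$ (guaranteeing $x_{\rho(k)} \to x$) and through the uniform variance bounds \ref{varbar}--\ref{var} along $(k,\rho(k))$, both of which persist for our non-decreasing $\rho$. Hence its conclusion applies verbatim, and verifying this compatibility is the one step I would spell out carefully when writing the details in full.
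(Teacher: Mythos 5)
Your proof is correct and is precisely the ``bespoke subsequence'' extraction that the paper has in mind: the paper gives no detail beyond that phrase, and your left inverse $\rho(k)=\max\{j\ge 1 : \xi(j)\le k\}$, which satisfies $\rho(\xi(n))=n$ and $\rho(k)\to+\infty$, is the natural way to realise it before restricting the conclusion of Proposition~\ref{prop:slln2} to the indices $k=\xi(n)$. Your concern that this $\rho$ is only non-decreasing (a strictly increasing left inverse cannot exist in general) is warranted, and you resolve it correctly: the proof of Proposition~\ref{prop:slln2} uses monotonicity of $\rho$ only to ensure $x_{\rho(k)}\to x$, while its variance estimates are uniform in the second index --- indeed the paper itself notes that the constant there is independent of $\rho$ --- so the conclusion holds for any diverging $\rho$.
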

\begin{proof}[Proof of Proposition~\ref{prop:slln2}]
  The proof of this result closely mimics the one of \cite[Theorem
  IV.1.1]{revuz-proba}.  We introduce the sequence $(Y_{i,m})_{i,m}$ defined by
  $Y_{i,m} = X_{i,m} - x_m$, which satisfies $\EE[Y_{i, m}] = 0$.  As $\lim_{m
    \to \infty} x_m = x$, it is sufficient to prove that $\overline Y_{n,
    \rho(n)} \longrightarrow 0$ a.s. 

    Condition~\ref{varbar} implies the $\LL^2$ convergence to $0$. We introduce
  the sequence $(Z_{n,m})_n$ defined by  $Z_{n,m} = \sup\{ \abs{\bar Y_{k, m}} \;
    : \; n^2 \le k < (n+1)^2\}$. Let $k$ be such that $n^2 \le k < (n+1)^2$,
  then
  \begin{align*}
    \abs{\bar Y_{k, m}} & \le n^{-2} \left( n^2 \abs{\bar Y_{n^2, m}} + \sum_{i=n^2+1}^k
      \abs{Y_{i,m}} \right), \\ 
    Z_{n,m} & \le \abs{\bar Y_{n^2, m}} + \frac{1}{n^2} \sum_{i=n^2+1}^{(n+1)^2}
      \abs{Y_{i,m}}.
  \end{align*}
  Then, 
  \begin{align*}
    \EE[Z_{n,m}^2] \le \EE[\bar Y_{n^2, m}^2] + \sum_{i=n^2+1}^{(n+1)^2}
    \left(\frac{\EE[\abs{Y_{i,m}}^2]}{n^4} + 2 \frac{\EE[ \abs{\bar Y_{n^2, m}}
      \abs{Y_{i,m}}]}{n^2} \right)+ 2 \sum_{i, j = n^2 +1; i \neq j}^{(n+1)^2}
    \frac{\EE[ \abs{Y_{j, m}} \abs{Y_{i,m}}]}{n^4}.
  \end{align*}
  Let $\kappa > 0$ denote the maximum of the upper bounds involved in
  Assumption~\ref{slln2}. Using the Cauchy Schwartz inequality, we get
  \begin{align*}
    \EE[Z_{n,m}^2] & \le \frac{\kappa}{n^2} + \frac{\kappa ((n+1)^2 - n^2)}{n^4}
    + 2 \frac{\kappa^2((n+1)^2 - n^2)}{n^3}
    + 2 \frac{\kappa^2 ((n+1)^2-n^2)^2}{n^4} \\
    & \le \frac{\kappa}{n^2} + \frac{\kappa (2 n + 1)}{n^4}
    + 2 \frac{\kappa^2 (2n+1)}{n^3}
    + 2 \frac{\kappa^2 (2n+1)^2}{n^4}.
  \end{align*}
  Hence, for any function $\rho: \NN \to \NN$, $\EE[Z_{n,\rho(n)}^2] \le C
  n^{-2}$ where $C >0$ is a constant independent of $\rho$. Therefore, we have
  $\PP(Z_{n,\rho(n)} \ge n^{-1/4}) \le C n^{-3/2}$. This inequality implies 
  using the Borel Cantelli Lemma that, for $n$ large enough $Z_{n,\rho(n)} \le
  n^{-1/4}$ a.s. which yields the a.s. convergence to $0$.
\end{proof}

\begin{proposition}
  \label{prop:slln2-u} Let $(F_{n,m})_{n, m}$ be a doubly indexed sequence of
  random variables with values in the set of continuous functions, ie.  for all
  $n, m$,  $F_{n,m} : \Omega \longrightarrow C^0(\RR^d)$. Moreover, we assume that there
  exists a sequence of deterministic functions $(f_m)_m$ s.t. for all $n$ 
  $\EE[F_{n,m}] = f_m$ for all $m$.  We define $\overline F_{n,m} = \inv{n}
  \sum_{i=1}^n F_{i,m}$. Assume that the two following assumptions are
  satisfied
  \begin{hypo}
    \label{slln2-fm}
    One of the following criteria holds
    \begin{subhypo}
      \item \label{f_m-pointwise} The sequence $(f_m)_m$ converges pointwise to
        some continuous function $f$.
      \item \label{f_m-unif} The sequence $(f_m)_m$ converges locally uniformly to
        some function $f$.
    \end{subhypo}
  \end{hypo}
  \begin{hypo}
    \label{slln2-u}
    For any compact set $W \subset \RR^d$,
    \begin{subhypo}
      \item \label{varbar-u} $\sup_n  \sup_m  n \Var\left(\sup_{x \in W}
          \abs{\overline F_{n,m}(x)} \right) < +\infty$.
      \item \label{var-u} $\sup_n \sup_m \Var\left( \sup_{x \in W} \abs{F_{n,m}(x)}
        \right) < +\infty$.
    \end{subhypo}
  \end{hypo}
  \begin{hypo}
    \label{slln2-sup-u} 
    For all $y \in \RR^d$, $\lim_{\delta \to 0} \sup_n \sup_m \EE\left[ \sup_{\abs{x
          -y} \le \delta} \abs{F_{n,m}(x) - F_{n,m}(y)}
    \right] = 0$.
  \end{hypo}
  Then, for all functions $\rho: \NN \to \NN$, the sequence of random functions
  $\overline F_{n, \rho(n)}$ converges a.s. locally uniformly to the locally
  continuous function $f$.
\end{proposition}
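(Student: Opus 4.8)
The plan is to run the classical scheme ``pointwise almost sure convergence together with stochastic equicontinuity implies locally uniform convergence'', using Proposition~\ref{prop:slln2} as the pointwise engine and Assumption~\ref{slln2-sup-u} to control the oscillations. Since locally uniform convergence is exactly uniform convergence on each compact set and $\RR^d = \bigcup_{R \in \NN} \overline B(0,R)$, it suffices to fix a compact $W = \overline B(0,R)$, to prove that $\sup_{x \in W} \abs{\overline F_{n, \rho(n)}(x) - f(x)} \to 0$ almost surely, and then to intersect the countably many resulting almost sure events. Under either branch of Assumption~\ref{slln2-fm} the limit $f$ is continuous (a locally uniform limit of continuous functions is continuous), hence uniformly continuous on $W$; write $\omega_f$ for its modulus of continuity on $W$.

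\emph{Pointwise convergence.} Fix $x \in W$. The scalar sequence $X_{n,m} = F_{n,m}(x)$ has mean $f_m(x) \to f(x)$, and the variance controls \ref{varbar} and \ref{var} required by Proposition~\ref{prop:slln2} are inherited from Assumption~\ref{slln2-u} through the pointwise domination $\abs{\overline F_{n,m}(x)} \le \sup_{y \in W} \abs{\overline F_{n,m}(y)}$ and $\abs{F_{n,m}(x)} \le \sup_{y\in W} \abs{F_{n,m}(y)}$. Proposition~\ref{prop:slln2} then gives $\overline F_{n, \rho(n)}(x) \to f(x)$ almost surely. Applying this simultaneously over a countable dense set $D \subset W$ and intersecting the null sets, we obtain a single almost sure event on which $\overline F_{n,\rho(n)}(x) \to f(x)$ for every $x \in D$.

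\emph{Oscillation control.} For $y \in W$ and $\delta > 0$ set $G_{i,m}(y, \delta) = \sup_{\abs{z - y} \le \delta} \abs{F_{i,m}(z) - F_{i,m}(y)}$, so that $\abs{\overline F_{n,m}(x) - \overline F_{n,m}(y)} \le \overline G_{n,m}(y,\delta) \defeq \inv{n} \sum_{i=1}^n G_{i,m}(y,\delta)$ for every $x$ with $\abs{x - y} \le \delta$. As these oscillations are dominated by $2 \sup_{z \in W'} \abs{F_{i,m}(z)}$ on a slightly larger compact $W' \supset W$, Assumption~\ref{slln2-u} again controls the two variances of $(G_{i,m}(y,\delta))_i$ needed by Proposition~\ref{prop:slln2}, while their means are uniformly bounded by $\varepsilon(y, \delta) \defeq \sup_n \sup_m \EE[G_{n,m}(y,\delta)]$. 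Running the blocking and Borel--Cantelli argument of Proposition~\ref{prop:slln2} on the centred variables $G_{i,m}(y,\delta) - \EE[G_{i,m}(y,\delta)]$ yields $\limsup_n \overline G_{n,\rho(n)}(y,\delta) \le \varepsilon(y,\delta)$ almost surely, and Assumption~\ref{slln2-sup-u} gives $\varepsilon(y,\delta) \to 0$ as $\delta \to 0$.

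\emph{Net argument and conclusion.} Fix $\eta > 0$ and choose $\delta$ so small that $\omega_f(\delta) \le \eta$ and $\varepsilon(y_j, \delta) \le \eta$ at each point $y_1, \dots, y_J$ of a finite $\delta$-net of $W$ taken within $D$. For $x \in W$, pick $y_j$ with $\abs{x - y_j} \le \delta$ and bound
\begin{align*}
  \abs{\overline F_{n,m}(x) - f(x)} \le \overline G_{n,m}(y_j, \delta) + \abs{\overline F_{n,m}(y_j) - f(y_j)} + \omega_f(\delta).
\end{align*}
Taking the supremum over $x \in W$ (hence a maximum over the finite net), and then $\limsup_n$ along $m = \rho(n)$, the net term vanishes by the pointwise step while the oscillation term contributes at most $\max_j \varepsilon(y_j,\delta) \le \eta$; thus $\limsup_n \sup_{x \in W} \abs{\overline F_{n,\rho(n)}(x) - f(x)} \le 2\eta$ almost surely, and letting $\eta \to 0$ along a countable sequence finishes the argument. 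The main obstacle is precisely the oscillation step: transferring the expectation-level modulus of continuity of Assumption~\ref{slln2-sup-u} into an almost sure, uniform-in-$n$ bound on the oscillation of the empirical averages $\overline F_{n,\rho(n)}$, which is what the second-moment blocking argument behind Proposition~\ref{prop:slln2} delivers; the finite net and the uniform continuity of $f$ then merely package pointwise convergence into uniform convergence.
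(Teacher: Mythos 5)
Your proposal is correct and takes essentially the same route as the paper's proof: almost sure pointwise convergence obtained from Proposition~\ref{prop:slln2}, a finite net of the compact set combined with the uniform continuity of $f$, and a second application of the blocking/Borel--Cantelli argument of Proposition~\ref{prop:slln2} to the centred oscillation variables, with Assumption~\ref{slln2-sup-u} making their expectations uniformly small. The differences are cosmetic (you use a countable dense set and an exhaustion of $\RR^d$ by closed balls where the paper covers an arbitrary compact $W$ by finitely many balls with centres $x_k$ and works with the same three-term decomposition), and your gloss that the variance hypotheses are ``inherited by domination'' is at the same level of rigour as the paper's corresponding appeal to Minkowski's inequality.
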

\begin{remark}\label{rem:iid}
  \begin{itemize}
    \item When for every fixed $m$, the sequence $(F_{n,m})_n$ is independent
      and identically distributed, Assumption \ref{slln2-sup-u} is ensured by
      \[ \forall \, y \in \RR^d, \;\lim_{\delta \to 0} \limsup_m \EE\left[
          \sup_{\abs{x -y} \le \delta} \abs{F_{1,m}(x) - F_{1,m}(y)} \right] = 0
        \] and Assumption~\ref{var-u} implies
        \ref{varbar-u}.
    \item As in Corollary~\ref{cor:slln}, for any strictly increasing function
      $\xi: \NN \to \NN$, the sequence $\overline
      F_{\xi(n),n}$ converges a.s. locally uniformly to the locally
      continuous function $f$.
  \end{itemize}
\end{remark}
\begin{proof}
  We can apply Proposition~\ref{prop:slln2}, to deduce that a.s. $\overline
  F_{n, \rho(n)}$ converges pointwise to the function $f$. If we do not already
  know that $f$ is continuous, then thanks to \ref{var-u}, we can
  apply Lebesgue's theorem to deduce that the functions $f_m$ are
  continuous. The uniform convergence of the sequence $f_m$ to $f$ (see
  \ref{f_m-unif}) proves that the function $f$ is continuous.

  Let $W$ be a compact set of $\RR^d$, we can cover $W$ with a finite number $K$ of
  open balls $W_k$ with centers $(x_k)_k$ and radiuses $(r_k)_k$, i.e. $W_k =
  B(x_k, r_k)$ and $W = \cup_{k=1}^K W_k$. We want to prove that
  \begin{align*}
    \sup_{x \in W} \abs{\overline F_{n,\rho(n)}(x) - f(x)} 
    \xrightarrow[n \to +\infty]{a.s.} 0.
  \end{align*}
  We write
  \begin{align}
    \label{eq:split1}
    \sup_{x \in W} \abs{\overline F_{n,\rho(n)}(x) - f(x)} = \sum_{k=1}^K
    \sup_{x \in W_k} \abs{\overline F_{n,\rho(n)}(x) - f(x)}.
  \end{align}
  We split each term
  \begin{align}
    \label{eq:split-sllnu}
    \sup_{x \in W_k} \abs{\overline F_{n,\rho(n)}(x) - f(x)}= & \sup_{x \in W_k}
    \abs{\overline F_{n,\rho(n)}(x) - \overline F_{n,\rho(n)}(x_k)} +  \sup_{x \in
      W_k} \abs{f(x) - f(x_k)} \nonumber\\
    & \quad +  \abs{\overline F_{n,\rho(n)}(x_k) - f(x_k)}
  \end{align}
  Let $\varepsilon >0$. The idea is to choose the radiuses $r_k$ small enough to
  ensure that each term is controlled by a function of $\varepsilon$. Now, we
  make the idea precise. For all $k=1,\dots,K$, the last term term can be made
  smaller that $\varepsilon/K$ for $n$ larger that some $N_k$ using the
  pointwise convergence. For all $n \ge \max_{k \le K} N_k$, and all $1 \le k
  \le K$, $\abs{\overline F_{n,\rho(n)}(x_k) - f(x_k)} \le \varepsilon/K$.
  The function $f$ being continuous, it is uniformly continuous on every $W_k$.
  If we choose the $W_k$ such that their radiuses are small enough (we may need
  to increase $K$), we can ensure that for all $1 \le k \le K$
  $\sup_{x \in W_k} \abs{f(x) - f(x_k)}  \le \varepsilon/K$.
  The first term on the r.h.s of \eqref{eq:split-sllnu} deserves more attention
  \begin{align}
    \label{eq:sup-barF}
   \sup_{x \in W_k} \abs{\overline F_{n,\rho(n)}(x) - \overline
     F_{n,\rho(n)}(x_k)}  \le    \inv{n} \sum_{i=1}^n \sup_{x \in W_k}
   \abs{F_{i,\rho(n)}(x) - F_{i,\rho(n)}(x_k)}.
  \end{align}
  Now, for every $1 \le k \le K$, we want to apply Proposition~\ref{prop:slln2}
  to the sequence of random variables $\left(\sup_{x \in W_k} \abs{F_{n,m}(x) -
      F_{n,m}(x_k)}\right)_{n,m}$. Assumption~\ref{slln2} is clearly
  satisfied using Minkowski's inequality.

  Let us define the sequence $(Y_{n,m})_{n,m}$ by 
  $$
  Y_{n,m} = \sup_{x \in W_k} \abs{F_{n,m}(x) - F_{n,m}(x_k)} -
  \EE\left[ \sup_{x \in W_k} \abs{F_{n,m}(x) - F_{n,m}(x_k)} \right],
  $$
  satisfying $\EE[Y_{n,m}] = 0$ and the assumptions of
  Proposition~\ref{prop:slln2}.
    Hence, it yields that
  \begin{align}
    \label{eq:limempirique}
    \lim_{n \to +\infty}  \inv{n} \sum_{i=1}^n \sup_{x \in W_k}
    \abs{F_{i,\rho(n)}(x) - F_{i,\rho(n)}(x_k)} - \EE\left[ \sup_{x \in W_k}
      \abs{F_{n,\rho(n)}(x) - F_{n,\rho(n)}(x_k)} \right] = 0.
  \end{align}
  From \ref{slln2-sup-u}, we know that if the $W_k$ are chosen small
  enough, 
  \begin{equation}
    \label{eq:limsup}
    \sup_n \EE\left[ \sup_{x \in W_k} \abs{F_{n,\rho(n)}(x) - F_{n,\rho(n)}(x_k)} 
    \right] \le \varepsilon / K.
  \end{equation}
  Then, combining \eqref{eq:sup-barF}, \eqref{eq:limempirique}
  and \eqref{eq:limsup} yields that
  $ \sup_{x \in W_k} \abs{\overline F_{n,\rho(n)}(x) - \overline F_{n,\rho(n)}(x_k)}  \le
  \varepsilon / K$.
  We plus this inequality into \eqref{eq:split-sllnu} and deduce
  from~\eqref{eq:split1}, that for $n$ large enough,
  \begin{equation*}
    \sup_{x \in W} \abs{\bar F_{n, \rho(n)}(x) - f(x)} \le 3 \varepsilon. \qedhere
  \end{equation*}
\end{proof}


\section{Numerical experiments}\label{sec:num}

\subsection{Practical implementation}
\label{sec:implem}

Our approach cleverly mixes the famous multilevel Monte Carlo technique with importance
sampling to reduce the variance. A classical approach would have been to consider the
multilevel approximation of  $\EE \left[ \psi(X_T(\t)) \expp{-\t \cdot W_T - \frac{1}{2}
    |\t|^2 T}\right]$ while choosing the value of $\t$ which minimizes the variance of the
central limit theorem for multilevel Monte Carlo (see~\cite{BAK2015}). The asymptotic
variance involves both $\nabla \psi$ and the process $U$ given in \eqref{eq_U_theta}. Hence, a classical approach to
importance sampling for multilevel Monte Carlo would require extra knowledge than the
function $\psi$ and the underlying process $X$, thus precluding any kind of
automation.

We have chosen a completely different approach allowing for one importance sampling
parameter per level, which enables us to treat each level independently of the others.
In each level, we use a sample average approximation as in~\cite{JourLel} to compute the
optimal importance sampling parameter defined as the one minimizing the variance of the
current level. From Theorem~\ref{thm:clt-ml}, we know that this approach is optimal in the
sense that our multilevel estimator $Q_L(\hat \lambda_0, \dots, \hat \lambda_L)$ satisfies
a central limit theorem with a limiting variance given by $\inf v$ where $v$ defined
by~\eqref{eq:variance-ml} is the variance of the standard multilevel Monte Carlo
estimator. We managed to provide an algorithm reaching the optimal limiting variance
without computing $\nabla \psi$ nor the process $U$, hence our approach can be made fully
automatic.

\paragraph{Computation of $\hat \la_\l$.}

The parameters $\hat \la_\l$ are defined as the solutions of strongly convex minimization
problems. The minimization step is performed by the Newton--Raphson algorithm to $\nabla
v_{\l, N_\l'}$. The samples required to compute $\nabla v_{\l, N_\l'}$ and $\nabla^2
v_{\l, N_\l'}$ are generated once and for all before starting the Newton--Raphson
procedure such that the same samples are used through all the iterations of the gradient
descent.  This feature is specific to the optimisation step and may make the algorithm
highly memory demanding as soon as the numbers $N'_\l$ become large.  As the parameter
$\lambda$ is not involved in the function $\psi$, all the quantities $\psi(
X^{m^{\ell}}_{T,\l,k}) - \psi( X^{m^{\ell-1}}_{T,\l,k})$ for $k=1,\dots,N_\l$ can be
precomputed before starting the minimization algorithm, which enables us to save a lot of
computational time. 

The efficiency of the Newton--Raphson algorithm very much depends on the convexity of the
$v_{\l, N_\l'}$ functions. As already pointed out in~\cite{JourLel}, the smallest
eigenvalue of the Hessian matrix $\nabla^2  v_{\l, N_\l'}$ is basically
$\frac{T}{N_\l'} \sum_{k=1}^{N_\l'} \frac{m^\l}{(m-1)T} \abs{\psi( X^{m^{\ell}}_{T,\l, k}) -
\psi( X^{m^{\ell-1}}_{T,\l, k})}^2 \Ec^+(W_{\l, k}, \la)$, which can become extremely small and
then conflicts with the will to have the strongest possible convexity in order to speed up
Newton--Raphson's algorithm. This difficulty is circumvented by noticing the equality
$\nabla v_{\l, N_\l'}(\hat \la_\l) = 0$ can be written as 
\begin{align*}
  \hat \la_\l T - \frac{\frac{1}{N_\l'} \sum_{k=1}^{N_\l'}
    \frac{m^\l}{(m-1)T} W_{k, \l, T} \abs{\psi( X^{m^{\ell}}_{T,\l,k}) - \psi(
    X^{m^{\ell-1}}_{T,\l,k})}^2
  \expp{-\hat \lambda_\l \cdot W_{T,\l,k}}}{\frac{1}{N_\l'} \sum_{k=1}^{N_\l'}
    \frac{m^\l}{(m-1)T} \abs{\psi( X^{m^{\ell}}_{T,\l,k}) - \psi( X^{m^{\ell-1}}_{T,\l,k})}^2
  \expp{-\hat \lambda_\l \cdot W_{T,\l,k}}} = 0.
\end{align*}
Hence, $\hat \la_\l$ can be interpreted as the root of $\nabla u_{\l, N_\l'}$ with
\[
  u_{\l, N_\l'}(\lambda) = \frac{\abs{\lambda}^2 T}{2} 
  + \log\left( \frac{1}{N_\l'} \sum_{k=1}^{N_\l'}
    \frac{m^\l}{(m-1)T} \abs{\psi( X^{m^{\ell}}_{T,\l,k}) - \psi( X^{m^{\ell-1}}_{T,\l,k})}^2
    \expp{-\lambda \cdot W_{T,\l,k}} \right).
\]
The Hessian matrix of $u_{\l, N_\l'}$ is given by
\begin{align}
  \label{eq:hes_ml}
  \nabla^2 u_{\l, N_\l'}(\lambda) = & T I_q + \frac{\frac{1}{N_\l'} \sum_{k=1}^{N_\l'}
  \frac{m^\l}{(m-1)T} W_{k, \l, T} {(W_{k, \l, T})}^* \abs{\psi( X^{m^{\ell}}_{T,\l,k}) - \psi( X^{m^{\ell-1}}_{T,\l,k})}^2
    \expp{-\lambda \cdot W_{T,\l,k}}}{\frac{1}{N_\l'} \sum_{k=1}^{N_\l'}
    \frac{m^\l}{(m-1)T} \abs{\psi( X^{m^{\ell}}_{T,\l,k}) - \psi( X^{m^{\ell-1}}_{T,\l,k})}^2
    \expp{-\lambda \cdot W_{T,\l,k}}} \nonumber \\
  -  & \frac{
    \left(\frac{1}{N_\l'} \sum_{k=1}^{N_\l'}
      \frac{m^\l}{(m-1)T} W_{k, \l, T} \abs{\psi( X^{m^{\ell}}_{T,\l,k}) - \psi( X^{m^{\ell-1}}_{T,\l,k})}^2
      \expp{-\lambda \cdot W_{T,\l,k}}\right)}
  {\frac{1}{N_\l'} \sum_{k=1}^{N_\l'}
    \frac{m^\l}{(m-1)T} \abs{\psi( X^{m^{\ell}}_{T,\l,k}) - \psi( X^{m^{\ell-1}}_{T,\l,k})}^2
    \expp{-\lambda \cdot W_{T,\l,k}}} \nonumber \\
  & \quad \frac{\left(\frac{1}{N_\l'} \sum_{k=1}^{N_\l'}
      \frac{m^\l}{(m-1)T} W_{k, \l, T} \abs{\psi( X^{m^{\ell}}_{T,\l,k}) - \psi( X^{m^{\ell-1}}_{T,\l,k})}^2
      \expp{-\lambda \cdot W_{T,\l,k}}\right)^*}
  {\frac{1}{N_\l'} \sum_{k=1}^{N_\l'}
    \frac{m^\l}{(m-1)T} \abs{\psi( X^{m^{\ell}}_{T,\l,k}) - \psi( X^{m^{\ell-1}}_{T,\l,k})}^2
    \expp{-\lambda \cdot W_{T,\l,k}}}.
\end{align}
From the Cauchy Schwartz inequality, it is clear that $\nabla^2 u_{\l,
  N_\l'}(\lambda)$ is lower bounded by $T I_q$, where the inequality is to be
understood in the sense of the order on symmetric matrices.

\paragraph{Description of the algorithm.} Our algorithm splits in two steps: the
minimization step to compute the optimal importance sampling measure and the MLMC
step to actually provide an estimator of $\EE[\psi(X_T)]$. The samples used in the two
steps are independent.  For the sake of clearness, we provide the pseudocode of
our global method in in Algorithm~\ref{algo:MLIS}.
\begin{algorithm}[ht]
  Generate $X^{m^0}_{T, 0, 1}, \dots, X^{m^0}_{T, 0, N_0'}$ i.i.d. samples
  following the law of $X^{m^0}_T$ independently of the other blocks. \;
  Solve $\nabla u_{0, N_0'}(\hat \lambda_0) = 0$ by using the Newton--Raphson
  algorithm. \;
  \For{$\l=1:L$}{
    Generate $(X^{m^\l}_{T, \l, 1}, X^{m^{\l-1}}_{T, \l, 1}), \dots,
    (X^{m^\l}_{T, \l, N_\l'}, X^{m^{\l-1}}_{T, \l, N_\l'})$ i.i.d. samples following the law
    of $(X^{m^\l}_T, X^{m^{\l-1}}_T)$ independently of the other blocks. \;
    Solve $\nabla u_{\l, N_\l'}(\hat \lambda_\l) = 0$ by using the Newton--Raphson
    algorithm.
  }
  Conditionally on $\hat \la_0$, generate $\tilde X^{m^0}_{T, 0, 1}(\hat
  \la_0), \dots, \tilde X^{m^0}_{T, 0, N_0}(\hat \la_0)$ i.i.d. samples with the
  law of $X^{m^0}_T(\hat \la_0)$ independently of the other blocks. The tilde and non
  tilde quantities are conditionally independent. \;
  \For{$\l=1:L$}{
    Conditionally on $\hat \la_\l$, generate $(\tilde X^{m^\l}_{T, \l, 1}(\hat
    \la_\l), \tilde X^{m^{\l-1}}_{T, \l, 1}(\hat \la_\l)), \dots, (\tilde X^{m^\l}_{T, \l, N_\l}(\hat
    \la_\l), \tilde X^{m^{\l-1}}_{T, \l, N_\l}(\hat \la_\l))$ i.i.d. samples with the law of
    $(X^{m^\l}_T(\hat \la_\l), X^{m^{\l-1}}_T(\hat \la_\l))$ independently of the other
    blocks. The tilde and non tilde quantities are conditionally independent. \;
  }
  Compute the multilevel importance sampling estimator
  \begin{align*}
    Q_L(\hat \la_0,\dots,\hat \la_L) & =\frac{1}{N_0}\sum_{k=1}^{N_0}
    \psi( \tX^{m^0}_{T,0,k}(\hat \la_0)) \Ec^-(\tilde W_{0,k}, \hat \la_0) \\
    & \qquad +\sum_{\ell=1}^{L}\frac{1}{N_\ell}
    \sum_{k=1}^{N_\ell}\left(\psi( \tX^{m^{\ell}}_{T,\ell,k}(\hat \la_{\ell}))-\psi(
    \tX^{m^{\ell-1}}_{T,\ell,k}(\hat \la_{\ell}))\right) \Ec^-(\Tilde W_{\ell,k},
    \hat \la_\ell).
  \end{align*} 
  \caption{Multilevel Importance Sampling (MLIS)}
  \label{algo:MLIS}
\end{algorithm}

\paragraph{Complexity analysis.} In this paragraph, we focus on the impact of the number
of levels $L$ on the overall computational time of our algorithm. The computational cost
of the standard multilevel estimator is proportional to
\begin{equation*}
  C_{ML} = \sum_{\l=0}^L N_\l m^\l = m^{2 L +1} L^2.
\end{equation*}
The global cost of our algorithm writes as the sum of the cost of the computation of
the $(\hat \lambda_{\l})_\l$ and of the standard multilevel estimator
\begin{equation*}
  C_{MLIS} = \sum_{\l=0}^L N_\l' (m^\l + 3 K_\l) + \sum_{\l=0}^L N_\l m^\l
\end{equation*}
where $K_\l$ is the number of iterations of Newton--Raphson's algorithm to approximate
$\hat \lambda_\l$ and the factor $3$ corresponds to the fact that building $\nabla
u_{\l, N_\l'}$ and $\nabla^2 u_{\l, N_\l'}$ basically boils down to three Monte Carlo
summations. In practice, $K_\l \le 5$ as the problem is strongly convex. Because the same
random variables are used at each iteration of the optimisation step, they must be stored,
which makes the memory footprint of our algorithm proportional to $N'_\l$.

So, if we choose $N_\l' = \frac{N_\l m^\l}{m^\l + 15}$, the total cost of our MLIS
algorithm should be roughly twice the cost of the standard multilevel estimator. This
choice of $N_\l'$ reduces the number of samples used to approximate the variance of the
first levels compared to using directly $N_\l$. However, when $L$ increases, $N'_\l$ can
become extremely large for small values of $\l$ which leads to an even larger memory
footprint (see Section~\ref{sec:implem}). Not to break the scalability of the algorithm,
the values of $N'_\l$ have to be kept reasonable depending on the amount of memory
available on the computer. For an instance, enforcing $N'_\l \le 500000$ is reasonable on
a computer with $8Gb$ of RAM. Anyway, it is crystal clear that a fairly good approximation
of the variance $v_\l$ is enough and running for an ultimately accurate estimator would
lead to a tremendous waste of computational time. Monitoring the convergence of
$v_{\l, N_\l'}$ would really help choosing sensible values for $N_\l'$.

\subsection{Comparison with existing algorithms}

In Theorem~\ref{thm:clt-ml},  we obtain the same limiting variance as in~\cite{BHK13-1},
in which the authors apply MLMC to importance sampling (see~\eqref{eq:is-ml}) and not
vice--versa as we do. The way importance sampling and MLMC are coupled does not actually
matter in terms of convergence rate but it does matter in practice. First, our approach
preserves the independence of the different levels by solving one optimization problem
per level instead of a global one. Hence, the contributions of the different levels are
computed independently of each other as in the standard MLMC setting. Second, we use
sample average approximation combined with Newton--Raphson's algorithm to compute the best
importance parameters, whereas in \cite{BHK13-1,BHK16}, the authors rely on stochastic
approximation, which is known to demand proper tuning to effectively converge in practice.
Our approach inherits from the good convergence properties of Newton's algorithm when
applied to strongly convex problems with a tractable Hessian matrix. As already noted in
\cite{JourLel}, this approach is more stable and robust.

\subsection{Experimental settings}

We compare four methods in terms of their root mean squared error (RMSE): the crude Monte
Carlo method (MC), the adaptive Monte Carlo method proposed in~\cite{JourLel} (MC+IS), the
Multilevel Monte Carlo method (ML) and our Importance Sampling Multilevel Monte Carlo
estimator (ML+IS). We recall that the RMSE is defined by $RMSE = \sqrt{\text{Bias}^2 +
  \text{Variance}}$. In the computation of the bias, the true value is replaced by its
multilevel Monte Carlo estimator with $L=9$ levels, which yields a very accurate
approximation. Not to mention, the CPU times showed on the graphs take into account both
the time to the search for the optimal parameter and the time for the second stage Monte
Carlo, be it multilevel or not.

\subsection{Multidimensional Dupire's framework}

We consider a $d-$dimensional local volatility model, in which the dynamics,
under the risk neutral measure, of each asset $S^i$ is supposed to be given by
\begin{equation*}
  dS_t^i = S_t^i (r \dt + \sigma(t, S_t^i) dW^i_t), \qquad S_0 = (S_0^1, \dots, S_0^d)
\end{equation*}
where $W = (W^1, \dots, W^d)$, each component $W^i$ being a standard Brownian
motion with values in $\RR$. For the numerical experiments, the covariance structure of $W$ will be
assumed to be given by $\langle W^i, W^j \rangle_t = \rho t \ind{i \neq j} + t
\ind{i = j}$. We suppose that $\rho \in (-\frac{1}{d-1}, 1)$, which
ensures that the matrix $C=(\rho\ind{i \neq j} +\ind{i = j})_{1\leq i,j\leq
  d}$ is positive definite. Let $L$ denote the lower triangular matrix
involved in the Cholesky decomposition $C=LL^*$. To simulate $W$ on the
time-grid $0<t_1<t_2<\hdots<t_N$, we need $d\times N$ independent
standard normal variables and set
$$\begin{pmatrix}
  W_{t_1} \\ W_{t_2}\\ \vdots \\ W_{t_{N-1}} \\ W_{t_N}
\end{pmatrix}=
\begin{pmatrix}
  \sqrt{t_1}L & 0 & 0 &\hdots &0\\
  \sqrt{t_1}L &\sqrt{t_2-t_1}L & 0 &\hdots &0\\
  \vdots&\ddots&\ddots&\ddots&\vdots\\
  \vdots&\ddots&\ddots& \sqrt{t_{N-1}-t_{N-2}}L&   0 \\
  \sqrt{t_1}L & \sqrt{t_2-t_1}L  &\hdots & \sqrt{t_{N-1}-t_{N-2}}L
  &\sqrt{t_N-t_{N-1}}L
\end{pmatrix}G,$$
where $G$ is a normal random vector in $\RR^{d \times N}$.  The maturity time and
the interest rate are respectively denoted by $T>0$ and $r>0$.  The local
volatility function $\sigma$ we have chosen is of the form 
\begin{equation}
  \label{locvol}
  \sigma(t, x) = 0.6 (1.2-\expp{-0.1 t} \expp{-0.001 (x \expp{r t} - s)^2 })
  \expp{-0.05 \sqrt{t}},
\end{equation}
with $s>0$. We know that there exists a duality between the variables $(t ,x)$
and $(T, K)$ in Dupire's framework. Hence for formula~\eqref{locvol} to
make sense, one should choose $s$ equal to the spot price of the underlying
asset so that the bottom of the smile is located at the forward money. We
refer to Figure~\ref{fig:volsmile} to have an overview of the smile.
\begin{figure}[ht]
  \centering \includegraphics[width=8cm]{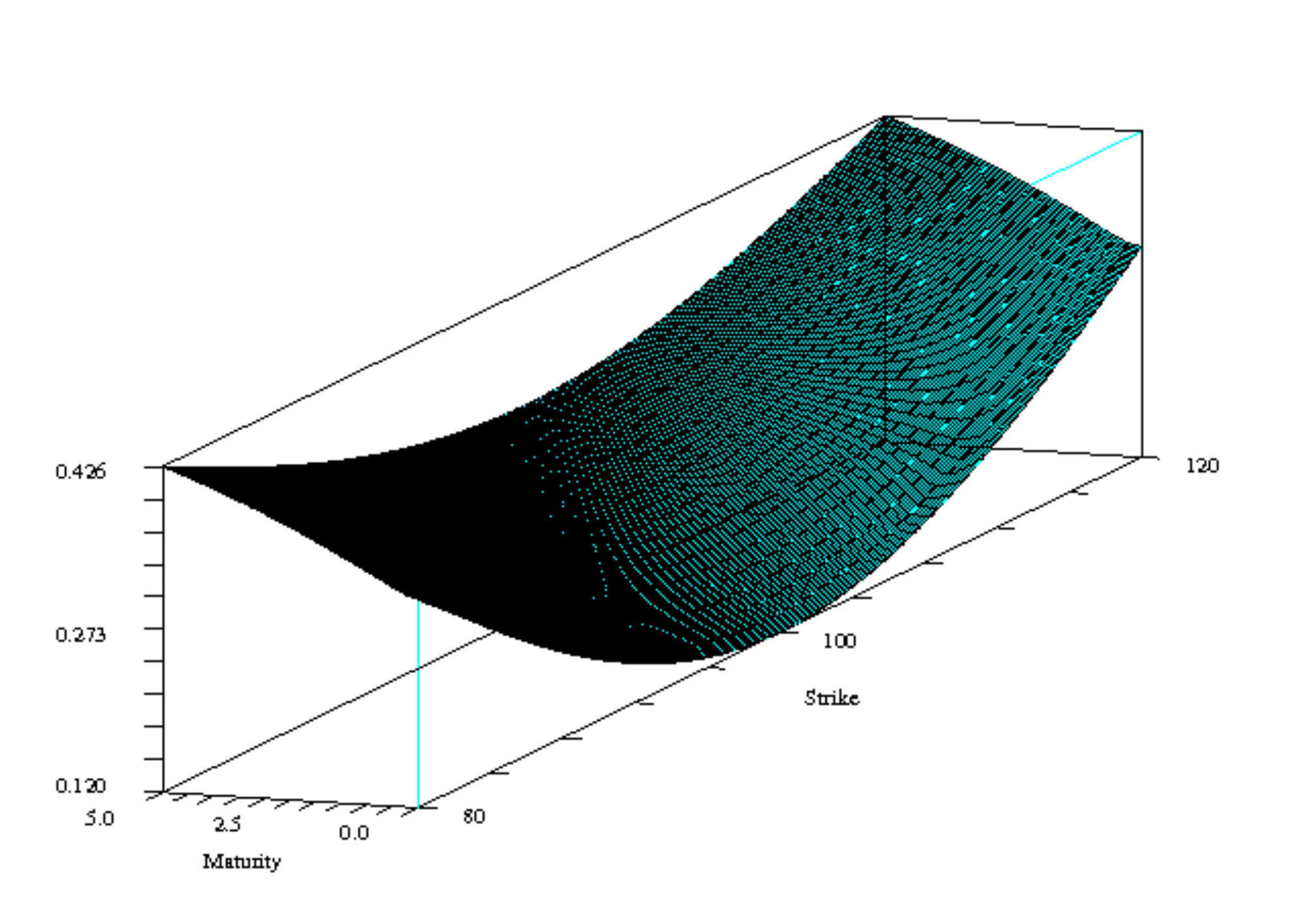}
  \caption{Local volatility function \label{fig:volsmile}}
\end{figure}

\paragraph{Basket option}

We consider options with payoffs of the form $(\sum_{i=1}^d \omega^i S_T^i - K)_+$ where
$(\omega^1, \dots, \omega^d)$ is a vector of algebraic weights. The strike value $K$ can
be taken negative to deal with Put like options. With no surprise, we can see on
Figure~\ref{fig:basket-loc} that multilevel estimators always outperform their classical
Monte Carlo counterpart. The comparison for very little accurate estimators may be
meaningless as it is pretty difficult to reliably measure short execution times and the
empirical variance of the estimator is in this case even less accurate than the estimator
itself. Note that the points on the extreme right hand side are obtained for multilevel
estimators with $L=2$, respectively for Monte Carlo estimators with $256$ samples. For
RMSE between $0.1$ and $0.005$, our MLIS estimator is $10$ times faster than the standard
ML estimator. When a very high accuracy is required, namely when RMSE is smaller than
$0.001$, the MLIS estimator remains between $3$ and $4$ times faster than the standard
multilevel estimator, which is already a great achievement since for this level of
accuracy, the ML estimator may need several dozens of minutes to yield its result.

\begin{figure}[ht]
  \centering \includegraphics[scale=0.6]{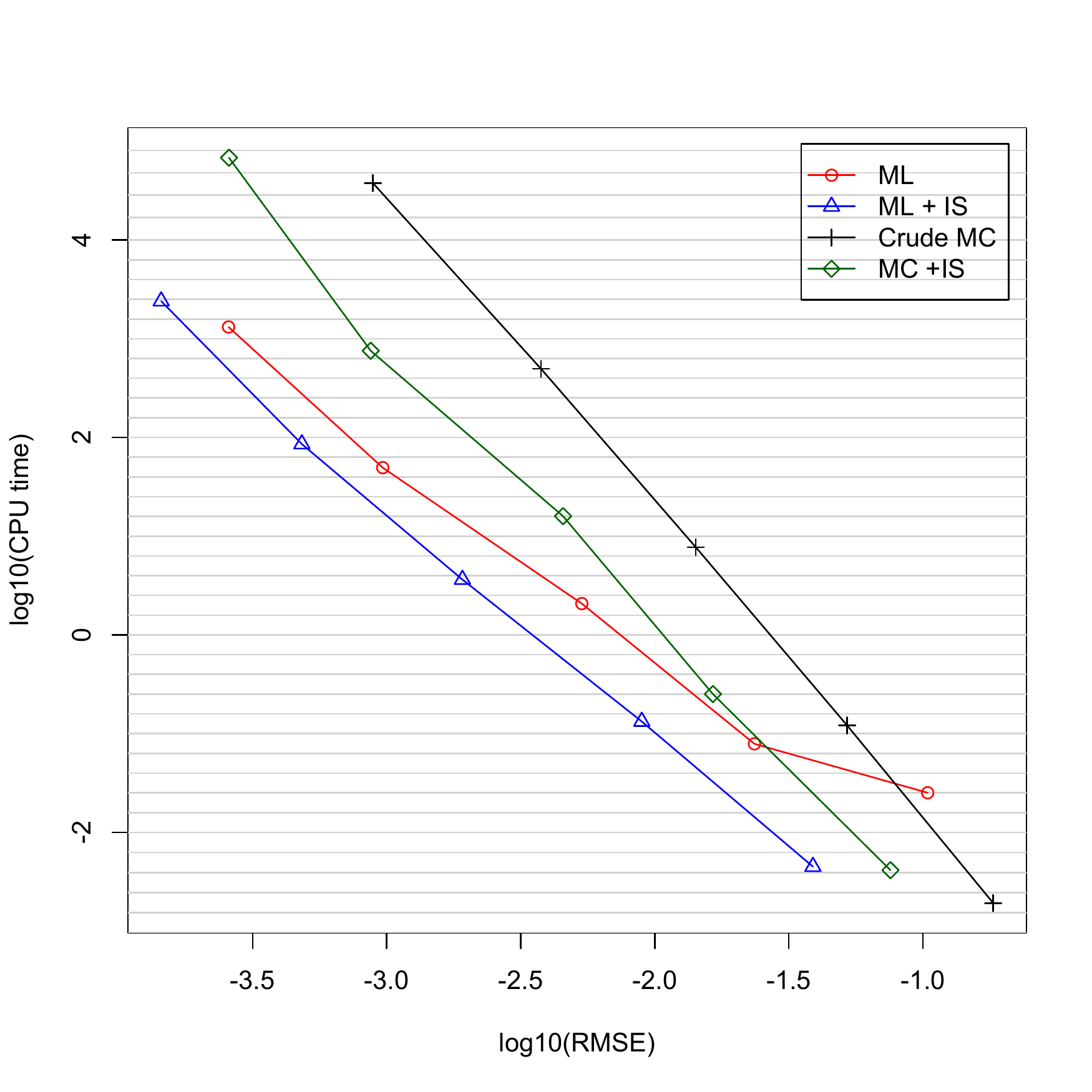}
  \caption{$\sqrt{MSE}$ vs. CPU time for a basket option in the local volatility model
    with $I=5$, $r=0.05$, $T=1$, $S_0=100$, $K=100$, $m=4$.
  \label{fig:basket-loc}}
\end{figure}

\subsection{Multidimensional Heston model}

The multidimensional Heston model can be easily written by specifying on the one
hand that each asset follows a 1-D Heston model and on the other hand the correlation
structure between the involved Brownian motions. The asset price process $S =
(S^1, \dots, S^d)$ and the volatility process $\sigma=(\sigma^1, \dots,
\sigma^d)$ solve
\begin{align*}
  dS^i_t &= r S^i_t dt + \sqrt{\sigma_t^i} S^i_t dB^i_t \\
  d\sigma^i_t &= \kappa^i (a^i - \sigma_t^i) dt + \nu^i_t \sqrt{\sigma^i_t} (\gamma^i dB^i_t + \sqrt{1 -
    (\gamma^i)^2} d\tilde B^i_t) 
\end{align*}
where all the components of $B=(B^1, \dots, B^d)$ and $\tilde B = (\tilde B^1, \dots,
\tilde B^d)$ are real valued Brownian motions. The vectors $\kappa=(\kappa^1, \dots,
\kappa^d)$ and $a=(a^1, \dots, a^d)$ denote respectively the reversion rate and the mean
level of each volatility process, while the vector $\nu$ is the volatility of the
volatility process.  The vector $\bar \gamma = (\gamma^1, \dots, \gamma^d)$ embodies the
correlations between an asset and its volatility process, with $\gamma^i \in ]-1,1[$ for
all $1 \le i \le d$. The vector valued processes $B$ and $\tilde B$ are independent and
satisfy
\begin{equation*}
  d\langle B \rangle_t = \Gamma_S \dt \quad \text{and} \quad
  d\langle \tilde B \rangle_t = I_d \dt
\end{equation*}
where we assume for our experiments that the covariance matrix
$\Gamma_S$ has the structure
\begin{equation}
  \label{eq:covstruct}
\Gamma_S = \begin{pmatrix}
  1 & \rho & \hdots & \rho\\
  \rho & 1 &\ddots & \vdots\\
  \vdots&\ddots&\ddots& \rho\\
  \rho &\hdots & \rho & 1 
\end{pmatrix}
\end{equation}
with $\rho \in \left]\frac{-1}{I-1}, 1\right[$, such that the matrix $\Gamma_S$ is positive definite.
The processes $B$ and $\tilde B$ are Wiener processes with covariance matrices
given by $\Gamma_S$ and $I_d$ respectively. 

For the sake of simplicity, we decided not to add any extra correlation between the
components of $\tilde B$, hence the choice $d\langle \tilde B \rangle = I_d \, dt$ and we
assume in the following that for all the $\gamma^i$'s are equal for $1 \le i \le d$,
$\gamma^i = \gamma$. The correlations between the volatilities are entirely
specified by the correlations between the assets. Even though we do not aim at discussing
the correlation structure of the multidimensional Heston model, we believe it is important
to make precise the underlying correlation structure in the multidimensional model so that
the experiments are easily reproducible.

The model can be equivalently written
\begin{align*}
  dS^i_t &= r S^i_t dt + \sqrt{\sigma_t^i} S^i_t dB^i_t \\
  d\sigma^i_t &= \kappa^i (a^i - \sigma_t^i) dt + \nu^i_t \sqrt{\sigma^i_t} dW^i_t 
\end{align*}
where the processes $W$ and $B$ are Wiener processes satisfying
\begin{align*}
  d\langle B \rangle_t = \Gamma_S \dt; \;
  d\langle B, W \rangle_t = \gamma \Gamma_S \dt; \; 
  d\langle W \rangle_t = (\gamma^2 \Gamma_S + (1 - \gamma^2) I_d) \dt.
\end{align*}
The process $(B, W)$ with values in $\RR^{2 d}$ is a Wiener process with
covariance matrix
$$
\Gamma = \begin{pmatrix}
  \Gamma_S & \gamma \Gamma_S \\
  \gamma \Gamma_S & \gamma^2 \Gamma_S + (1 - \gamma^2) I_d
\end{pmatrix}.
$$
Hence, the pair of processes $(B, W)$ can be easily simulated by applying the
Cholesky factorization of $\Gamma$ to a standard Brownian motion with values in
$\RR^{2 d}$.

\paragraph{Basket Option} We consider a basket option as in the local volatility model.
Figure~\ref{fig:basket_heston} looks very much the same as in the case of the local
volatility model (see Figure~\ref{fig:basket-loc}). The MLIS estimator always outperforms
all the ML estimator by a factor of $3$ to $4$. Note that for small RMSE, the computational
time can go beyond several hours, hence cutting it down by two or three times represents a
real improvement.

\begin{figure}[ht]
  \centering
  \includegraphics[scale=0.6]{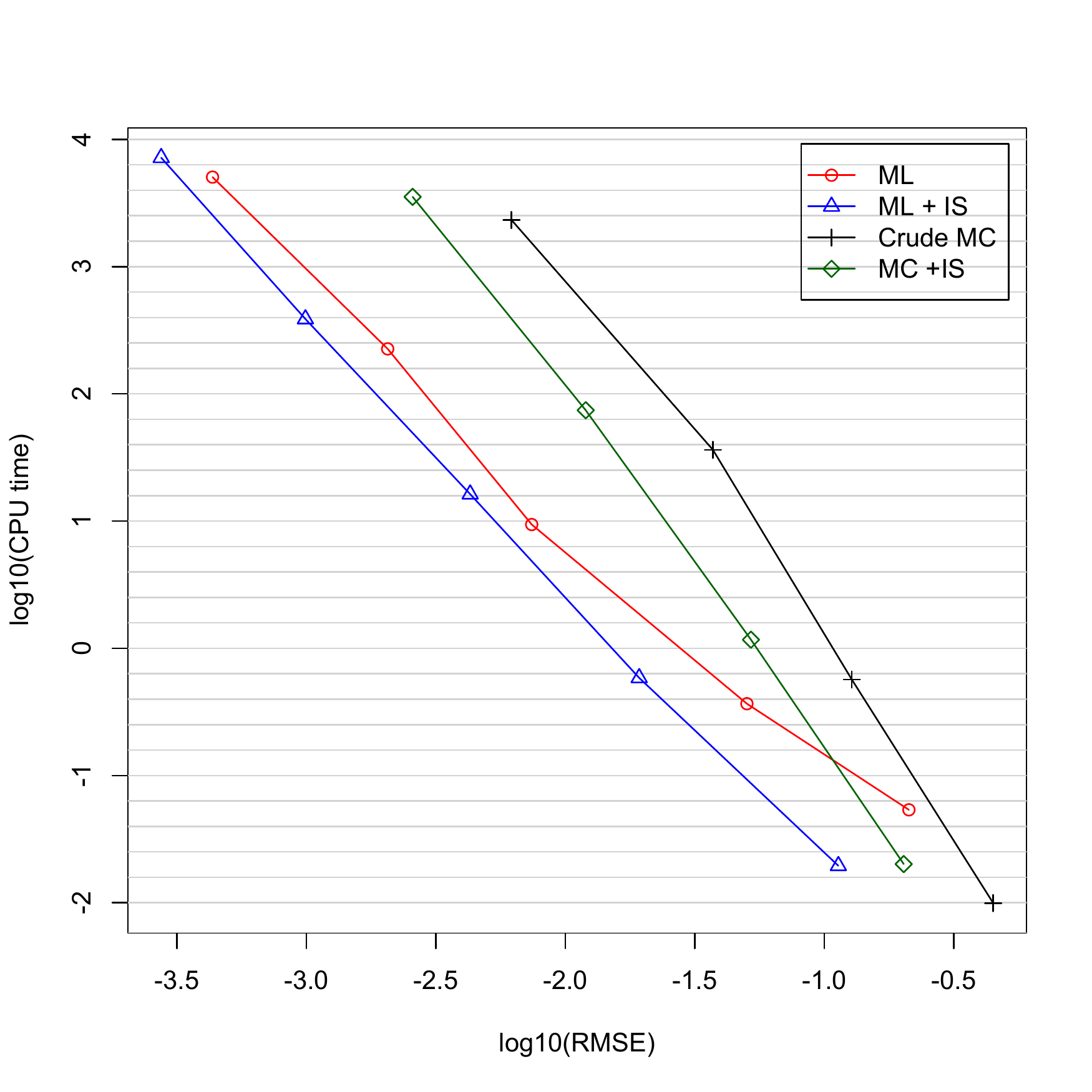}
  \caption{$\sqrt{MSE}$ vs. CPU time for a best of option in the multidimensional Heston
    model with $I=10$, $r=0.03$, $T=1$, $S_0=100$, $K=100$, $\nu=0.01$,
    $\kappa=2$, $a = 0.04$, $\gamma=-0.2$, $\rho=0.3$ and $m=4$.}
  \label{fig:basket_heston}
\end{figure}

\paragraph{Best of option} We consider options with payoffs of the form $(\max_{1 \le i \le
  d} S_T^i - K)_+$. The payoff of this option does obviously not satisfy the assumptions
of Theorem~\ref{thm:slln-ml} as the payoff of the ``best of'' options is not Hölder with
$\alpha \ge 1$. Nonetheless, the multilevel approach beats the standard Monte Carlo
technology by far (see Figure~\ref{fig:bestof}). Moreover, coupling importance sampling
with the multilevel approach improves the accuracy. For a fixed RMSE, we can expect MLIS
to be $3$ faster that ML. This example shows the robustness of the method, which performs
well whereas the theoretical assumptions are not satisfied.

\begin{figure}[ht]
  \centering \includegraphics[scale=0.6]{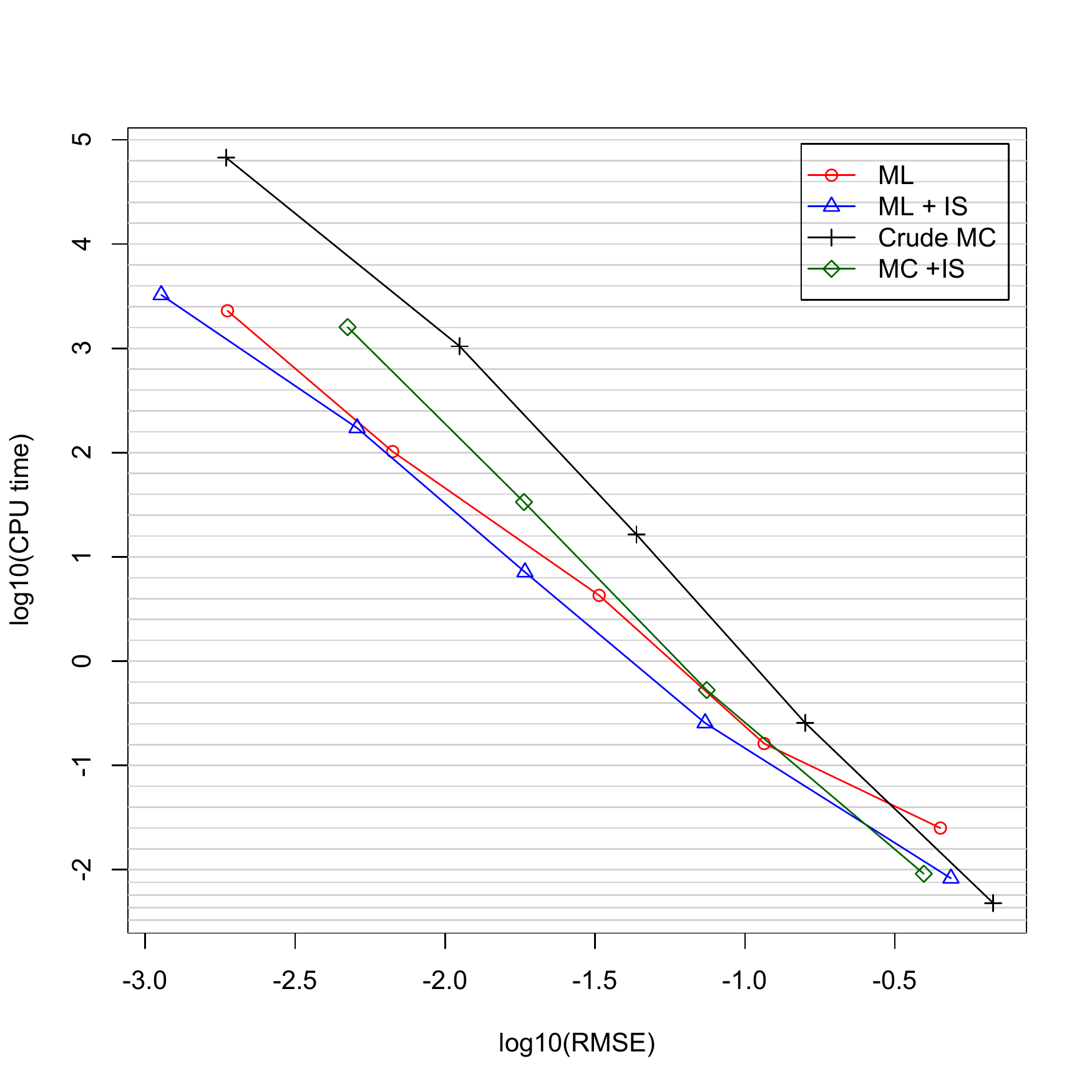}
  \caption{$\sqrt{MSE}$ vs. CPU time for a best of option in the multidimensional Heston
    model with $I=5$, $r=0.03$, $T=1$, $S_0=100$, $K=140$, $\nu=0.25$,
    $\kappa=2$, $a = 0.04$, $\gamma=0.2$, $\rho=0.5$ and $m=4$.}
  \label{fig:bestof}
\end{figure}

\section{Conclusion}

We have presented a new estimator making the most of the recent works on multilevel Monte
Carlo and on adaptive importance sampling. As expected, this new estimator outperforms the
standard multilevel Monte Carlo estimator by a great deal. For a fixed accuracy measured
in terms the mean squared error, the MLIS estimator is between $3$ and $10$ times faster
that the standard multilevel Monte Carlo estimator. This efficiency of our MLIS approach
could still be improved by monitoring the number of samples $N_\l'$ to be used to
approximate the variance $v_{\l, N_\l'}$ in each level. Actually, we believe that there is
no need to compute a too accurate approximation of this variance as a slight decrease in
the accuracy of $\hat \lambda_\l$ would not lead to a serious deterioration of the accuracy
of the MLIS estimator but it could help save a lot of computational time.

\section*{Acknowledgment}

We are grateful to the anonymous referees for their valuable comments and suggestions,
which helped us greatly improve the paper.

\clearpage
\appendix

\section{Auxiliary lemmas}

\subsection{Central limit theorems for martingale arrays}

\begin{theorem}[Central limit theorem for triangular array]
  \label{thm:lindeberg}
  Suppose that $(\Omega,\mathbb F, \PP)$ is a probability space and that for
  each $n$, we have a filtration $\mathbb{F}_{n}=(\mathcal{F}_{k}^{n})_{k\geq
  0}$, a sequence $k_{n} \longrightarrow \infty  \mbox{ as  } n \longrightarrow
  \infty$  and a real  vector martingale
  $Y^{n}=(Y_{k}^{n})_{k\geq 0}$ adapted to $\mathbb{F}_{n}$.
  We make the following two assumptions.
  \begin{hypo}
    \label{triarray}
    \begin{subhypo}
    \item \label{bracket} There exists a deterministic symmetric positive
      semi-definite matrix $\varGamma$, such that
      $$
      \langle Y^{n} \rangle_{k_{n}}= \sum_{k=1}^{k_{n}} \mathbb{E}\left[
      |Y_{k}^{n}-Y_{k-1}^{n}|^{2} | \mathcal{F}_{k-1}^{n}  \right]
      \overset{\mathbb{P}}{\underset{n\rightarrow\infty}{\longrightarrow}}  \varGamma.
      $$
    \item \label{ui} There exists a real number $a>1$, such that
      $$ 
      \sum_{k=1}^{k_{n}} \mathbb{E} \left[| Y_{k}^{n}-Y_{k-1}^{n}|^{2a} |
      \mathcal{F}_{k-1}^{n} \right]
      \overset{\mathbb{P}}{\underset{n\rightarrow\infty}{\longrightarrow}}  0.
      $$
    \end{subhypo}
  \end{hypo}

  Then 
  $$
  Y_{k_{n}}^{n} \xrightarrow{\mathcal{L}}
  \mathcal{N}(0,\varGamma)\quad\mbox{ as }\; n\rightarrow\infty.
  $$ 
\end{theorem}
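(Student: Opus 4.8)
The plan is to reduce the vector-valued statement to a scalar one via the Cramér--Wold device, then to show that the Lyapunov-type control~\ref{ui} implies the classical conditional Lindeberg condition, and finally to invoke a standard central limit theorem for scalar martingale difference arrays.

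First I would fix an arbitrary $u \in \RR^q$ and consider the real-valued martingale $(u^* Y_k^n)_{k \ge 0}$, whose increments are $\zeta_k^n \defeq u^*(Y_k^n - Y_{k-1}^n)$. Since weak convergence in $\RR^q$ is characterised by the convergence of all one-dimensional projections, it is enough to prove $u^* Y_{k_n}^n \Longrightarrow \Nc(0, u^* \varGamma u)$ for every $u$. The predictable bracket of the projected martingale is
$$
\sum_{k=1}^{k_n} \EE\left[ \abs{\zeta_k^n}^2 \mid \Fc_{k-1}^n \right]
= u^* \langle Y^n \rangle_{k_n}\, u
\xrightarrow[n \to \infty]{\PP} u^* \varGamma u,
$$
which follows directly from Assumption~\ref{bracket} since $\EE[\zeta_k^n (\zeta_k^n)^* \mid \Fc_{k-1}^n] = u^* \EE[(Y_k^n - Y_{k-1}^n)(Y_k^n - Y_{k-1}^n)^* \mid \Fc_{k-1}^n] u$.

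Next I would verify the conditional Lindeberg condition for the scalar increments. Using the elementary bound $\abs{\zeta_k^n} \le \abs{u}\, \abs{Y_k^n - Y_{k-1}^n}$ together with the fact that, for $a>1$, $\ind{\abs{\zeta_k^n} > \varepsilon} \le (\abs{\zeta_k^n}/\varepsilon)^{2(a-1)}$, I obtain for every $\varepsilon > 0$
$$
\sum_{k=1}^{k_n} \EE\left[ \abs{\zeta_k^n}^2 \ind{\abs{\zeta_k^n} > \varepsilon} \mid \Fc_{k-1}^n \right]
\le \frac{\abs{u}^{2a}}{\varepsilon^{2(a-1)}} \sum_{k=1}^{k_n} \EE\left[ \abs{Y_k^n - Y_{k-1}^n}^{2a} \mid \Fc_{k-1}^n \right]
\xrightarrow[n \to \infty]{\PP} 0,
$$
where the convergence to zero is precisely Assumption~\ref{ui}. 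This passage from the Lyapunov control to the Lindeberg condition is the one genuinely computational step; the remainder is a matter of citing the appropriate reference.

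With the conditional variance converging in probability to the deterministic limit $u^* \varGamma u$ and the conditional Lindeberg condition satisfied, the classical central limit theorem for martingale difference arrays (see e.g. Hall and Heyde, \emph{Martingale Limit Theory and Its Application}, Theorem~3.2) yields $u^* Y_{k_n}^n \Longrightarrow \Nc(0, u^* \varGamma u)$, where in the degenerate directions ($u^* \varGamma u = 0$) this is read as convergence in probability to $0$. Since $u$ is arbitrary, the Cramér--Wold device delivers the announced multivariate convergence $Y_{k_n}^n \xrightarrow{\mathcal L} \Nc(0, \varGamma)$. The point requiring the most care is to appeal to a version of the martingale CLT that accommodates convergence of the bracket \emph{in probability} (rather than almost surely) and a genuinely triangular, non-nested array indexed by $n$; both features are standard but must be matched against the hypotheses of the cited theorem.
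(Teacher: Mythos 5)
Your proof is correct, but a direct comparison is not possible: the paper states this theorem in its appendix as a standard auxiliary result and gives no proof at all, implicitly delegating it to the classical literature on martingale limit theory (Hall--Heyde, Duflo). Your proposal supplies exactly the argument that such references contain, along the standard route: Cram\'er--Wold reduction to the scalar projections $u^*Y^n_k$, the elementary bound $\ind{\abs{\zeta}>\varepsilon}\le(\abs{\zeta}/\varepsilon)^{2(a-1)}$ converting the Lyapunov-type control \ref{ui} into the conditional Lindeberg condition, and the scalar martingale CLT with conditional variance converging in probability to a constant. Two details you handle correctly and that are worth making explicit. First, the bracket in \ref{bracket} must be read as the matrix-valued predictable covariation $\sum_{k}\EE\left[(Y^n_k-Y^n_{k-1})(Y^n_k-Y^n_{k-1})^*\mid\Fc^n_{k-1}\right]$ --- the paper's notation $\abs{\cdot}^2$ looks scalar while the stated limit $\varGamma$ is a matrix --- and this interpretation is what makes your projection identity $u^*\langle Y^n\rangle_{k_n}u\to u^*\varGamma u$ meaningful; in the paper's actual application (proof of Theorem \ref{thm:clt-ml}) the martingale is scalar, so the ambiguity is harmless there. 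Second, your closing caveat is the genuinely delicate point: Hall--Heyde's Theorem 3.2 requires nested $\sigma$-fields across $n$ to obtain stable convergence with a random limiting variance, and it is only because $\varGamma$ here is deterministic that the nesting hypothesis can be dropped (their Corollary 3.1 together with the remark on a.s. constant limiting variance is the precise citation); your treatment of the degenerate directions $u^*\varGamma u=0$ as convergence in probability to $0$ is likewise the right reading. With those references pinned down, your proof stands as a complete substitute for the citation the paper leaves implicit.
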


\subsection{Asymptotic behavior of the process $\left(X^{m^{\l}}-X^{m^{\l -1}}\right)_{\l\geq 0}$}
In the following we recall some results around the stable convergence.
 Let $Z_n$ be a sequence of random variables with values in a Polish space $E$, all defined on the same probability 
space $(\Omega,\mathcal F,\PP)$. Let $(\tilde \Omega,\tilde {\mathcal F},\tilde \PP)$ be an extension 
of $(\Omega,\mathcal F,\PP)$, and let $Z$ be an $E$-valued random variable on the extension. 
We say that  $(Z_n)$ converges in law to $Z$ stably and write $Z_n\Longrightarrow^{stably}Z$, if
\begin{equation*}
\displaystyle \mathbb E(Uh(Z_n))\rightarrow \tilde {\mathbb E}(Uh(Z))
\end{equation*}
for all $h:E\rightarrow \mathbb R$ bounded continuous and all bounded random variable $U$
on $(\Omega,\mathcal F)$ .  According to Section 2 of Jacod
\cite{Jacod} and Lemma 2.1 of Jacod and Protter \cite{Jacod-Protter}, we have the
following result
\begin{lemma}\label{lemma}
Let  $V_n$ and $V$ be defined on $(\Omega,\mathcal F)$ with values
 in another metric space.
\begin{equation*}
\mbox{If }\;\; V_n\overset{\mathbb P}{\rightarrow}V,\;\;Z_n\Longrightarrow^{stably}Z\;\;\;\mbox{then}\;\;\; (V_n,Z_n)\Longrightarrow^{stably}(V,X).
\end{equation*} 
\end{lemma}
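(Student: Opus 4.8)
The plan is to verify the defining relation of stable convergence directly. Writing $F$ for the metric space in which the $V_n,V$ take their values and $E$ for the Polish space of the $Z_n$, I would show that for every bounded continuous $G:F\times E\to\RR$ and every bounded random variable $U$ on $(\Omega,\mathcal F)$,
$$
\EE\left[U\,G(V_n,Z_n)\right]\longrightarrow \tilde{\mathbb E}\left[U\,G(V,Z)\right].
$$
(Here the limit $(V,X)$ in the statement should read $(V,Z)$.) The clean core of the argument is the case of a product test function $G(v,z)=\phi(v)\,h(z)$ with $\phi\in C_b(F)$ and $h\in C_b(E)$. I would split
$$
\EE\left[U\phi(V_n)h(Z_n)\right]=\EE\left[(U\phi(V))\,h(Z_n)\right]+\EE\left[U(\phi(V_n)-\phi(V))\,h(Z_n)\right].
$$
In the first term $U\phi(V)$ is a bounded $\mathcal F$-measurable random variable, so the very definition of $Z_n\Longrightarrow^{stably}Z$, applied with weight $U\phi(V)$ and test function $h$, yields convergence to $\tilde{\mathbb E}[U\phi(V)h(Z)]=\tilde{\mathbb E}[U\,G(V,Z)]$. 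For the second term, continuity of $\phi$ together with $V_n\overset{\PP}{\to}V$ gives $\phi(V_n)\to\phi(V)$ in probability by the continuous mapping theorem; the integrand is bounded by $2\|U\|_\infty\|h\|_\infty\|\phi\|_\infty$ and tends to $0$ in probability, so bounded convergence forces it to vanish. This disposes of all product test functions.

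The main obstacle is upgrading from products to an arbitrary bounded continuous $G$, which I would handle by tightness and a Stone--Weierstrass approximation (assuming, as in all the applications in this paper, that $F$ is Polish so that the laws involved are tight). Taking $U\equiv 1$ shows $Z_n\Longrightarrow Z$, hence $(Z_n)_n$ is tight on $E$; and $V_n\overset{\PP}{\to}V$ makes $(V_n)_n$ tight on $F$. Consequently the family $(V_n,Z_n)_n$ together with $(V,Z)$ is tight, so for each $\varepsilon>0$ there is a compact set $K\subset F\times E$ with $\sup_n\PP((V_n,Z_n)\notin K)<\varepsilon$ and $\tilde\PP((V,Z)\notin K)<\varepsilon$. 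On $K$ the algebra generated by the functions $v\mapsto\phi(v)$ and $z\mapsto h(z)$ contains the constants and separates points, so by Stone--Weierstrass I can choose a finite sum $G_\varepsilon=\sum_j\phi_j\otimes h_j$ with $\sup_K|G-G_\varepsilon|<\varepsilon$, arranged through a cutoff so that $\|G_\varepsilon\|_\infty$ stays bounded independently of $\varepsilon$.

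A standard three-$\varepsilon$ estimate then finishes the proof. I would bound
$$
\left|\EE\left[U\,G(V_n,Z_n)\right]-\tilde{\mathbb E}\left[U\,G(V,Z)\right]\right|
$$
by the three contributions $|\EE[U(G-G_\varepsilon)(V_n,Z_n)]|$, $|\EE[U\,G_\varepsilon(V_n,Z_n)]-\tilde{\mathbb E}[U\,G_\varepsilon(V,Z)]|$, and $|\tilde{\mathbb E}[U(G_\varepsilon-G)(V,Z)]|$. The first and third are controlled by $\varepsilon\|U\|_\infty$ on $K$ and by $(\|G\|_\infty+\|G_\varepsilon\|_\infty)\|U\|_\infty\,\varepsilon$ on its complement via the tightness bounds, while the middle term tends to $0$ by linearity from the already established product case. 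Letting first $n\to\infty$ and then $\varepsilon\to0$ yields the claim. The delicate part is precisely this last reduction: ensuring the approximation is uniform on a common compact set for all $n$ and for the limit simultaneously, which is exactly where the joint tightness is indispensable.
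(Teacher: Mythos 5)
Your proposal is correct in its essentials, but be aware that the paper itself contains no proof of this lemma: it is simply quoted from Section~2 of Jacod and Lemma~2.1 of Jacod--Protter, so the comparison is really with the classical proofs in that literature. Your route --- handle product test functions $\phi\otimes h$ by the very definition of stable convergence (weight $U\phi(V)$) plus bounded convergence in probability, then extend to general $G\in C_b(F\times E)$ by joint tightness and Stone--Weierstrass --- is the natural ``weak-convergence-style'' argument; the product step is complete and correct, and you rightly flag the typo $(V,X)$ for $(V,Z)$ in the statement. The one step that needs repair is the cutoff you invoke to keep $\|G_\varepsilon\|_\infty$ bounded independently of $\varepsilon$: the naive truncation (composing $G_\varepsilon$ with a clamp) destroys the product structure on which the convergence of your middle term relies, so as written this is a gap. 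It can be fixed inside the algebra: since $|G_\varepsilon|\le\|G\|_\infty+\varepsilon$ on $K=K_1\times K_2$, the tube lemma gives a product neighbourhood $N_1\times N_2\supset K_1\times K_2$ on which $|G_\varepsilon|<\|G\|_\infty+2\varepsilon$, and multiplying by $\chi_1\otimes\chi_2$, with $\chi_i$ Urysohn functions equal to $1$ on $K_i$ and supported in $N_i$, yields again a finite sum of products, globally bounded by $\|G\|_\infty+2\varepsilon$ and $\varepsilon$-close to $G$ on $K$. Alternatively the whole issue disappears with a subsequence argument: for $U\ge 0$ (the general case follows by linearity) the finite measures $\nu_n(\cdot)=\EE[U\ind{(V_n,Z_n)\in\cdot}]$ are tight, every weak limit point agrees with $\nu(\cdot)=\tilde{\mathbb E}[U\ind{(V,Z)\in\cdot}]$ on all products $\phi\otimes h$ and hence equals $\nu$, so $\nu_n\Rightarrow\nu$.

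It is also worth noting that your argument requires $F$ to be Polish (to get tightness of the laws of $V_n$), whereas the lemma is stated for an arbitrary metric space, and that the proof underlying the cited references is both more elementary and more general: test against bounded Lipschitz $G$ only, discretize $V$ into a random variable $V^\delta$ taking countably many values with $d(V,V^\delta)\le\delta$ (separability of $F$ suffices), apply stable convergence with the weights $U\ind{V^\delta=v_i}$ and test functions $G(v_i,\cdot)$, and absorb the replacement of $V_n$ by $V$ through $|G(V_n,Z_n)-G(V,Z_n)|\le\min\left(2\|G\|_\infty,\,L\,d(V_n,V)\right)\to 0$ in probability. That proof needs neither tightness nor Stone--Weierstrass, which is what the extra generality of the stated lemma buys.
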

The following result proved by Ben Alaya and Kebaier \cite[Theorem 3]{BAK2015} is an improvement of Theorem 3.2 of Jacod and Protter \cite{Jacod-Protter}, for the setting of Multilevel Euler scheme. 
More precisely, if $(X^{m^{\l}}_t)_{t\geq 0}$ denotes the Euler scheme with time step
$m^{\l}$, with  $m, \l\in\NN\setminus\{0,1\}$, then we have the following weak convergence in the Skorohod topology. 
\begin{theorem}\label{th-acc}
 Assume that $b$ and $\sigma$ are $\mathcal C^1$ with linear growth then the following result holds. 
\begin{equation*}
\text{For all}\;m\in\mathbb N\setminus\{0,1\},\quad\sqrt{\frac{m^\l}{(m-1)T}}(X^{m^\l}-X^{m^{\l-1}})\Longrightarrow^{stably} U,\quad \text{ as }\l\rightarrow\infty,
\end{equation*}
with $(U_t)_{0\leq t\leq T}$ the $d$-dimensional diffusion process solution to (\ref{eq_U_theta})
\end{theorem}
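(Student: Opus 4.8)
The plan is to adapt the stable-convergence analysis of the Euler scheme error due to Jacod and Protter \cite{Jacod-Protter} to the nested pair of schemes $(X^{m^\l}, X^{m^{\l-1}})$, both driven by the same Brownian motion $W$ but on grids of mesh $T/m^\l$ and $T/m^{\l-1}$. Writing $D^\l \defeq X^{m^\l} - X^{m^{\l-1}}$, I would first derive the integral equation satisfied by $D^\l$ by subtracting the two discretised SDEs. Because $b$ and $\sigma$ are $\mathcal{C}^1$ with linear growth, a first-order Taylor expansion of the coefficients along the segment joining the two discretised values turns the coefficient differences into $\nabla b(X) D^\l$ and $\nabla \sigma_j(X) D^\l$ up to remainders that vanish in probability after rescaling. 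This isolates the homogeneous linear part of the target SDE \eqref{eq_U_theta} and leaves a source term carrying the genuine discretisation mismatch between the two grids.

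The heart of the argument is the stable convergence of this source term. After multiplication by $\sqrt{m^\l/((m-1)T)}$, the source reduces to sums of stochastic integrals of the form $\int \nabla\sigma_j(X_s)\sigma_i(X_s)(W^i_s - W^i_{\eta(s)})\, dW^j_s$ taken over the fine and coarse grids; these are the only contributions of the right order, all others being negligible by the strong rate \ref{eq:P} together with Cauchy--Schwarz. I would compute the predictable quadratic variation of the corresponding discrete martingale array and show it converges in probability to the deterministic bracket $\frac{1}{2}\int_0^t \nabla\sigma_j(X_s)\sigma_i(X_s)(\nabla\sigma_{j'}(X_s)\sigma_{i'}(X_s))^*\, ds$, the factor $\tfrac12$ and the normalising constant $(m-1)$ arising precisely from integrating $\EE[(W_s - W_{\eta(s)})^2]$ over the two meshes and their overlap. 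Simultaneously I would check that the bracket of the source with $W$ vanishes, which is what produces a limiting noise $\check W$ independent of $W$. Verifying the Lindeberg-type higher-moment condition (Assumption~\ref{ui}) via the moment bounds on the Euler scheme then lets me invoke the functional stable central limit theorem underlying Theorem~\ref{thm:lindeberg} to obtain the stable convergence of the source to $-\frac{1}{\sqrt2}\sum_{i,j}\int_0^\cdot \nabla\sigma_j(X_s)\sigma_i(X_s)\,d\check W^{i,j}_s$.

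Having the stable limit of the driver and the locally uniform convergence of the linearising coefficients toward $\nabla b(X)$ and $\nabla \sigma_j(X)$, I would close the argument by a stability (continuity of the It\^o map) result for linear SDEs: combining the stable convergence of the source term with the convergence in probability of the coefficient processes through Lemma~\ref{lemma}, and using that the limiting equation \eqref{eq_U_theta} is linear with Lipschitz coefficients and hence admits a unique strong solution $U$, the rescaled difference $\sqrt{m^\l/((m-1)T)}\,D^\l$ converges stably to $U$. The main obstacle I anticipate is the bracket computation for the source: one must keep careful track of the correlations between the fine and coarse increments of the shared Brownian motion to recover exactly the $\tfrac1{\sqrt2}$ coefficient and the $(m-1)$ normalisation, while simultaneously establishing the asymptotic independence from $W$ that yields the extra Brownian motion $\check W$. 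Tightness of $(\sqrt{m^\l/((m-1)T)}\,D^\l)_\l$, needed to upgrade finite-dimensional stable convergence to the Skorokhod topology, follows from the same moment estimates but must be verified uniformly in $\l$.
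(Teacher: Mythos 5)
The paper offers no proof of this statement: it is imported verbatim from Ben Alaya and Kebaier \cite[Theorem 3]{BAK2015}, which is itself an adaptation of Jacod and Protter's analysis of the normalized Euler scheme error to the nested pair $(X^{m^\l},X^{m^{\l-1}})$ driven by the same Brownian path. Your outline --- subtract the two schemes, linearize the coefficients to extract the homogeneous part $\nabla b(X)D^\l\,dt+\sum_j\nabla\sigma_j(X)D^\l\,dW^j$, isolate the martingale source term built from the increments $W^i_{\eta_\l(s)}-W^i_{\eta_{\l-1}(s)}$ of the shared Brownian motion, show its predictable bracket converges to $\tfrac12\int_0^t(\cdots)\,ds$ (which is exactly where the $1/\sqrt{2}$ factor and the $(m-1)T$ normalization come from), show its bracket against $W$ vanishes (which produces the independent noise $\check W$), and close via a stable limit theorem and continuity of the solution map for the linear limiting SDE --- is precisely the strategy of that reference, and the key computations you describe are correct. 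One caution: Theorem~\ref{thm:lindeberg} of this paper is a finite-dimensional mixed CLT for martingale arrays, not a stable \emph{functional} limit theorem, so the concluding step cannot literally rest on it; one must invoke Jacod's stable convergence theorem for triangular arrays of martingale increments (as in \cite{Jacod} or \cite{Jacod-Protter}), together with a uniform tightness (P-UT) property of the driving sequence, to transfer the stable convergence of the source term through the linear equation in the Skorokhod topology.
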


\begin{proposition}
  \label{prop:conv-moments-level}
  Let $\psi : \RR^d \to \RR$ be a $C^1$ function such that $\psi \in \Hc_\alpha$,
  for some $\alpha \ge 1$ and $\nabla \psi$ has at most polynomial growth. 
  For any real valued random variable $Y$ defined on $(\Omega,
  \Fc)$ such that $\EE[\abs{Y}^{1 + \eta}]$, for some $\eta>0$, we have, for any
  $\delta>0$
  \[
    \EE\left[ \left(\frac{m^\l}{(m-1)T}\right)^{\delta/2}
      \left(\psi(X^{m^{\ell}}_{T})-\psi(X^{m^{\ell-1}}_{T})\right)^\delta
      Y \right] \xrightarrow[\l \to +\infty]{} 
    \EE\left[ \left(\nabla \psi(X_T) \cdot U_T\right)^\delta Y \right].
  \]
\end{proposition}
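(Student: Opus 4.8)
The plan is to reduce the claim to the stable convergence supplied by Theorem~\ref{th-acc} through a first order Taylor expansion of $\psi$, and then to turn the resulting convergence in law into a convergence of moments by means of a uniform integrability argument. Throughout, set $D_\l \defeq \sqrt{\frac{m^\l}{(m-1)T}}\left(X^{m^\l}_T - X^{m^{\l-1}}_T\right)$, so that Theorem~\ref{th-acc} reads $D_\l \Longrightarrow^{stably} U_T$, with $U$ solving~\eqref{eq_U_theta}. Since $\psi$ is $C^1$, I would write
\[
  \psi(X^{m^\l}_T) - \psi(X^{m^{\l-1}}_T) = G_\l \cdot \left(X^{m^\l}_T - X^{m^{\l-1}}_T\right), \quad
  G_\l \defeq \int_0^1 \nabla\psi\left(X^{m^{\l-1}}_T + t(X^{m^\l}_T - X^{m^{\l-1}}_T)\right) dt,
\]
so that the random variable inside the expectation is exactly $(G_\l \cdot D_\l)^\delta Y$. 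By Property~\ref{eq:P}, both $X^{m^\l}_T$ and $X^{m^{\l-1}}_T$ converge to $X_T$ in every $L^p$, hence in probability; extracting subsequences along which the convergence is almost sure and using the continuity of $\nabla\psi$, one deduces that $G_\l \to \nabla\psi(X_T)$ in probability.

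Next I would combine these convergences. Applying Lemma~\ref{lemma} with $V_\l = G_\l \to \nabla\psi(X_T)$ in probability and $Z_\l = D_\l \Longrightarrow^{stably} U_T$ yields the joint stable convergence $(G_\l, D_\l) \Longrightarrow^{stably} (\nabla\psi(X_T), U_T)$. Since the map $(g,d) \mapsto (g \cdot d)^\delta$ is continuous on the relevant domain, the continuous mapping principle for stable convergence gives $(G_\l \cdot D_\l)^\delta \Longrightarrow^{stably} (\nabla\psi(X_T) \cdot U_T)^\delta$. Using stable, rather than plain weak, convergence is essential here: it is precisely what allows the multiplier $Y$, which is defined on the original space and correlated with the Euler schemes, to be carried through to the limit.

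The main obstacle is to upgrade this convergence in law to convergence of the expectation $\EE[(G_\l \cdot D_\l)^\delta Y]$, which demands a uniform integrability control of the family $\left((G_\l \cdot D_\l)^\delta Y\right)_\l$. First, the strong rate of Property~\ref{eq:P} gives $\EE\left[\abs{X^{m^\l}_T - X^{m^{\l-1}}_T}^p\right] \le C_p\, m^{-\l p/2}$, which exactly cancels the scaling and yields $\sup_\l \EE[\abs{D_\l}^p] < +\infty$ for every $p \ge 1$. Second, the polynomial growth of $\nabla\psi$ together with the uniform moment bounds $\sup_\l \EE[\abs{X^{m^\l}_T}^p] < +\infty$ (again from Property~\ref{eq:P}) gives $\sup_\l \EE[\abs{G_\l}^p] < +\infty$ for every $p \ge 1$. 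Combining the two bounds by Cauchy--Schwarz and then applying Hölder's inequality against $Y \in L^{1+\eta}$, I would obtain $\sup_\l \EE\left[\abs{(G_\l \cdot D_\l)^\delta Y}^{1+\epsilon}\right] < +\infty$ for some $\epsilon \in (0, \eta)$.

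Finally I would conclude. To accommodate the unbounded multiplier $Y$ and the unbounded map $x \mapsto x^\delta$ within the definition of stable convergence, I would truncate both: replace $Y$ by $Y\ind{\abs{Y} \le M}$ and $x \mapsto x^\delta$ by a bounded continuous truncation. For the resulting bounded quantities the convergence of expectations is immediate from the stable convergence established above, while the two truncation errors vanish uniformly in $\l$ (as $M \to +\infty$ and as the truncation level grows) precisely by the uniform $(1+\epsilon)$-moment bound of the previous step. Integrability of the limit $(\nabla\psi(X_T) \cdot U_T)^\delta Y$ follows from the same Hölder estimate, noting that $U_T \in L^p$ for every $p$ because $U$ solves the linear SDE~\eqref{eq_U_theta}, whose coefficients involve the bounded derivatives of $b$ and $\sigma$ and the polynomially growing $\sigma_i(X_t)$. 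This completes the argument, the only genuinely delicate point being the uniform moment control that licenses passing from convergence in law to convergence of the expectation.
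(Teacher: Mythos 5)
Your proof is correct and follows essentially the same route as the paper's: a first-order expansion of $\psi$ reducing the problem to the stable convergence of Theorem~\ref{th-acc}, joint stable convergence via Lemma~\ref{lemma}, and a uniform-integrability/H\"older argument against $Y \in L^{1+\eta}$ to upgrade convergence in law to convergence of expectations. The only cosmetic differences are that you use the integral mean-value form (an exact identity, so no vanishing-remainder step is needed) where the paper Taylor-expands around $X_T$ with a Peano remainder that must be shown to vanish in probability, and your moment bounds exploit the polynomial growth of $\nabla\psi$ rather than the $\Hc_\alpha$ property of $\psi$ together with Property~\ref{eq:P}.
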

\begin{proof}
  The Taylor expansion applied to the real valued function $\psi$ yields
  \begin{align*}
    \psi(X^{m^{\ell}}_{T})-\psi( X^{m^{\ell-1}}_{T}) = &
    \nabla \psi(X_T)\cdot(X^{m^{\ell}}_{T}-X^{m^{\ell-1}}_{T}) \\
    & + (X^{m^{\ell}}_{T}-X_T) \cdot \varepsilon(X^{m^{\ell}}_{T}-X_T)-
    (X^{m^{\ell-1}}_{T}-X_T) \cdot \varepsilon(X^{m^{\ell-1}}_{T}-X_T)
  \end{align*}
  with  $\varepsilon : \RR^d \to \RR^d$ satisfying $\lim_{\abs{x} \to 0}
  \varepsilon(x) = 0$.  From Property (\ref{eq:P}), we  easily get 
  $$
  \sqrt{\frac{ m^{\ell}}{(m-1)T}}\left((X^{m^{\ell}}_{T}-X_T) \cdot
    \varepsilon(X^{m^{\ell}}_{T}-X_T)- (X^{m^{\ell-1}}_{T}-X_T) \cdot
    \varepsilon(X^{m^{\ell-1}}_{T}-X_T) \right)
  \xrightarrow[\ell\to\infty]{\PP}0.
  $$
  So,  we conclude from Lemma \ref{lemma} and  Theorem \ref{th-acc} that
  \begin{equation*}
    {\sqrt{\frac{m^{\ell}}{(m-1)T}}} \left(\psi( X^{m^{\ell}}_{T})-\psi(
      X^{m^{\ell-1}}_{T})\right) \Longrightarrow^{stably} \nabla
    \psi(X_T).U_T,\; \mbox{ as } \;\ell\rightarrow\infty.
  \end{equation*}
  Let $\eta > \kappa >0$. From the assumptions on $\psi$ together with Property \ref{eq:P}, we get 
  \begin{equation*}
    \sup_{\l\geq 0}\EE\left[\left|\left(\frac{m^{\ell}}{(m-1)T}\right)^{\delta/2}
    \left(\psi( X^{m^{\ell}}_{T})-\psi(
      X^{m^{\ell-1}}_{T})\right)^\delta Y \right|^{1+\kappa}\right]<\infty,
  \end{equation*}
  which yields the uniform integrability of the family
  $\left(\left(\frac{m^\l}{(m-1)T}\right)^{\delta/2}
    \left(\psi(X^{m^{\ell}}_{T})-\psi(X^{m^{\ell-1}}_{T})\right)^\delta Y
  \right)_\l$. The conclusion easily follows.
\end{proof}

\bibliographystyle{abbrvnat}
\bibliography{biblio}
\end{document}